\newcommand{\fref}[1]{\hyperref[{#1}]{\ref*{#1}}}
\newcommand{\B}{\mathbb{B}}
\newcommand{\Laz}{\mathbb{L}}
\newcommand{\A}{\mathbb{A}}
\newcommand{\Proj}{\mathbb{P}}
\newcommand{\Q}{\mathbb{Q}}
\newcommand{\M}{\mathbb{M}}
\newcommand{\OO}{\mathcal{O}}
\newcommand{\Z}{\mathbb{Z}}
\newcommand{\I}{\mathbb{I}}
\newcommand{\Li}{\mathscr{L}}
\newcommand{\cc}{\mathcal{C}}
\newcommand{\op}{\mathrm{op}}
\newcommand{\ch}{\mathrm{CH}}
\newcommand{\lch}{\mathrm{LCH}}
\newcommand{\coim}{\mathrm{\coim}}
\newcommand{\Hom}{\mathrm{Hom}}
\newcommand{\Spec}{\mathrm{Spec}}
\newcommand{\Gr}{\mathrm{Gr}}
\newcommand{\bl}{\mathrm{Bl}}
\newcommand{\map}{\mathrm{Map}}
\newcommand{\QCoh}{\mathrm{QCoh}}
\newcommand{\colim}{\mathrm{colim}}
\newtheorem{theo}{Tplottin ubuntuheorem}[section]
\theoremstyle{plain}
\newtheorem{thm}[theo]{Theorem}
\newtheorem{lem}[theo]{Lemma}
\newtheorem{prop}[theo]{Proposition}
\newtheorem{cor}[theo]{Corollary}
\newtheorem*{thm*}{Theorem}
\newtheorem*{lem*}{Lemma}
\newtheorem*{prop*}{Proposition}
\newtheorem*{cor*}{Corollary}
\theoremstyle{definition}
\newtheorem{defn}[theo]{Definition}
\newtheorem{cons}[theo]{Construction}
\newtheorem{ex}[theo]{Example}
\newtheorem{que}[theo]{Question}
\theoremstyle{remark}
\newtheorem{rem}[theo]{Remark}
\newcommand{\Addresses}{{
  \bigskip
  \footnotesize

  Toni Annala, \textsc{Department of Mathematics, University of British Columbia,
    Vancouver, BC V6T1Z2 Canada}\par\nopagebreak
  \textit{E-mail address:} \texttt{tannala@math.ubc.ca}

}}
\title{Bivariant derived algebraic cobordism}
\author{Toni Annala}
\date{}
\begin{document}

\maketitle
\begin{abstract}
We extend the derived Algebraic bordism of Lowrey and Schürg to a bivariant theory in the sense of Fulton and MacPherson, and establish some of its basic properties. As a special case, we obtain a completely new theory of cobordism rings of singular quasi-projective schemes. The extended cobordism is shown to specialize to algebraic $K^0$ analogously to Conner-Floyd theorem in topology. We also give a candidate for the correct definition of Chow rings of singular schemes.
\end{abstract}

\tableofcontents

\section{Introduction} 

Algebraic cobordism, first introduced by Voevodsky in the context of motivic homotopy theory, was later given an alternative description in terms of cycles and relations by Levine and Morel. The alternative description works, however, only for smooth schemes: what Levine and Morel actually construct is the corresponding homology theory $\Omega_*$, \emph{algebraic bordism}, and this gives a cohomology theory only when restricted to smooth schemes. Although initially it was not clear if the theories of Voevodsky and Levine-Morel agree,  the dichotomy was essentially resolved by Levine in \cite{Lev}, when he showed that the natural map $\Omega_*(X) \to MGL^{BM}_{2*, *}(X)$ of homology groups is an isomorphism for all $X$ quasi-projective over a field of characteristic zero. 

The study of algebraic cobordism is justified by its universality: it is supposed to play a similar role in the study of cohomology theories of algebraic varieties as complex cobordism plays in the study of of complex oriented cohomology theories in topology. As an example of this principle in action, given a quasi-projective scheme $X$ over a field of characteristic 0, we obtain, by the results of Levine-Morel, natural identifications
$$\Z_a \otimes_\Laz \Omega_*(X) \cong \ch_*(X)$$
and 
$$\Z_m \otimes_\Laz \Omega_*(X) \cong G_0(X),$$
where $\Laz \cong \Omega_*(pt)$ is the Lazard ring, and $\Z_a, \Z_m$ are the integers considered as an $\Laz$-algebra in two different ways (via the maps classifying the additive and the multiplicative formal group laws respectively). Therefore algebraic bordism completely determines both the Chow group and the Grothendieck group of coherent sheaves. Moreover, taking this point of view makes it clear that $\ch_*(X)$ and $G_0(X)$ should agree with rational coefficients, as there is essentially only one morphism $\Laz \to \Q$.

\subsection{Summary of results}

The main purpose of this paper is to extend algebraic bordism groups $\Omega_*$ constructed by Levine and Morel to a bivariant theory. Recall that a \emph{bivariant theory} $\B^*$ consists of 
\begin{enumerate}
\item an Abelian group $\B^*(X \to Y)$ for any morphism $X \to Y$;

\item bilinear products (\emph{bivariant product})
$$\bullet: \B^*(X \to Y) \times \B^*(Y \to Z) \to \B^*(X \to Z)$$
associated to compositions of morphisms;

\item \emph{pushforwards}
$$f_*: \B^*(X \to Y) \to \B^*(X' \to Y)$$
whenever $X \to Y$ factors through a \emph{confined} morphism $f: X \to X'$ (e.g. proper morphism);

\item \emph{pullbacks}
$$g^*: \B^*(X \to Y) \to \B^*(X \times_Y Y' \to Y')$$
whenever the pullback of $X \to Y$ to $Y'$ is \emph{independent} (e.g. Tor-independent);
\end{enumerate}
and these structures are furthermore required to satisfy certain compatibility axioms (see Section \fref{BivariantTheories} for more details). The main benefit of such a theory is that it encompasses both a homological theory $\Omega^*(X \to pt)$ (covariant for confined morphisms) and a ring valued cohomological theory $\Omega^*(X \to X)$ (contravariant for all morphisms), which interact naturally (by bivariant product), and satisfy many desirable properties (such as projection formula). The main goal can now be stated as finding a bivariant theory $\Omega^*$, so that the homology groups $\Omega^*(X \to pt)$ can be naturally identified with the algebraic bordism groups $\Omega_*(X)$.

The construction of $\Omega^*$ is carried in Section \fref{BivariantConstruction}, and it requires us to use derived algebraic geometry. As an end product, we obtain a bivariant theory $\Omega^*$ associating an Abelian group $\Omega^* (X \to Y)$ for any homotopy class of morphisms of quasi-projective derived schemes. In Proposition \fref{InducedHomologyIsBordism} we identify the associated homology groups $\Omega^*(X \to pt)$ with the derived bordism groups $d \Omega_*(X)$ of Lowrey and Schürg from \cite{LS2}. By the main result of \emph{loc. cit.}, there is a natural isomorphism
$$d \Omega_*(X) \cong \Omega_*(\tau_0 X),$$
where $\tau_0$ is the classical truncation of the derived scheme $X$, and we have therefore found a bivariant theory extending $\Omega_*$. 

It follows immediately from the construction that $\Omega^*$ has natural \emph{orientations} $\theta(f) \in \Omega^*(X \xrightarrow{f} Y)$ along quasi-smooth morphisms $f$. Such structure immediately yields Poincaré isomorphism maps $\Omega^*(X \to X) \to \Omega^*(X \to pt)$ whenever $X$ is smooth by the general Proposition \fref{PoincareDuality}, and therefore identifies $\Omega^*(X \to X)$ with the algebraic cobordism ring of Levine and Morel. However, we note that the cohomology rings $\Omega^*(X \to X)$ are novel whenever $X$ is not smooth.

We then go on in Section \fref{ChernClassSect} to construct Chern classes 
$$c_i(E) \in \Omega^i(X \to X)$$
for $X$ an arbitrary quasi-projective derived scheme and $E$ a vector bundle on $X$. These classes allow us to show (Theorem \fref{KTheoryCobordism}) the existence of a natural isomorphism
$$\Z_m \otimes_\Laz \Omega^*(X \to X) \cong K^0(X),$$
where $K^0(X)$ is the Grothendieck group of vector bundles on $X$, and $\Z_m$ is the integers considered as an $\Laz$-algebra via the map classifying the multiplicative formal group law $x + y - xy$. This can be understood as an extension of the previously known results of Levine-Morel, where $X$ is required to be smooth.

Finally, one can use the bivariant algebraic cobordism $\Omega^*$ in order to construct a candidate for a genuine \emph{bivariant intersection theory}. Denoting by $\Z_a$ the integers considered as an $\Laz$-algebra via the map that classifies the additive formal group law, we define
$$\ch^*(X \to Y) := \Z_a \otimes_\Laz \Omega^*(X \to Y).$$
It is a straightforward corollary of the previous results in the paper that the associated homology groups $\ch^*(X \to pt)$ are naturally identified with the usual Chow group $\ch_*(\tau_0 X)$ of the classical truncation $\tau_0 X$ of any quasi-projective derived scheme $X$. We can then use the theory of Chern classes constructed in Section \fref{ChernClassSect} in order to define a Chern character map
$$ch: K^0(X) \to \Q \otimes_\Z \ch^*(X \to X)$$
commuting with pullbacks, and whose failure to commute with pushforwards is measured by a Todd class (see Theorem \fref{DerivedGRR}). Tensoring also the source $K^0(X)$ by rational numbers, the Chern character $ch$ becomes an isomorphism of rings. Finally, if $X$ is a quasi-projective classical scheme, there is a natural map
$$\lch^*(X) \to \ch^*(X)$$
where the left hand side is the Chow ring of a singular scheme defined in \cite{Ful2} Section 3. In Proposition \fref{CHRationallyStableUnderLKE} we show that the above morphism gives an isomorphism with rational coefficients.

\subsection{Related work}

This is not the first time $\Omega_*$ is extended to a bivariant functor. In \cite{KLG}, Karu and González, inspired by the results of Anderson and Payne on operational $K$-theory in \cite{AP}, define the \emph{operational} bivariant algebraic cobordism, a theory we will from now on denote by $\op \Omega^*$. This is analogous to the bivariant operational Chow theory of Fulton and MacPherson. Although simple to define, and satisfying nice functorial properties making the calculation of operational groups feasible, the theory leaves much to hope in terms of interpretability. The elements of the group are certain collections of maps between the corresponding bordism groups and it is therefore not clear what the theory computes. In fact, it follows from the functorial properties of bivariant theories, that the operational bivariant theory bootstrapped to a homology theory merely records how certain types of bivariant extensions of the homology theory act on it. Hence, the operational bivariant theory can be thought as the universal target for bivariant extensions of a homology theory. 

Another version of bivariant algebraic cobordism has been constructed by Déglise in \cite{De}. These bivariant groups $MGL^{BM}_{2*,*}(X/Y)$ do not agree with our groups $\Omega^*(X \to Y)$ as the homotopy invariance fails in our setting (see Remark \fref{HomotopyInvarianceFails}). As of now, the existence of a Grothendieck transformation $\Omega^*(X \to Y) \to MGL^{BM}_{2*,*}(X/Y)$ seems likely, but a construction would most likely require extending $MGL^{BM}_{2*,*}(X/Y)$ to maps between derived schemes. We note that neither of the two above bivariant theories can satisfy an analogue of Theorem \fref{KTheoryCobordism}.

Besides operational $K$-theory, there exists another, more geometric, bivariant extension of the Grothendieck $G$-theory: the $K$-theory of \emph{relatively perfect complexes}. The corresponding cohomology theory is nothing more than the usual $K$-theory of vector bundles (at least for quasi-projective schemes). This theory is much more complicated than its operational counterpart, see for example \cite{Gub} where it is shown that there are toric varieties with very large $K$-groups. Another difference between the two theories is that, whereas operational $K$-theory is homotopy invariant, the $K$ theory of relatively perfect complexes is not. The Picard groups $\mathrm{Pic}(X)$ exhibits similar behavior: there are well known examples of singular schemes $X$ such that $\mathrm{Pic}(X)$ is not isomorphic to $\mathrm{Pic}(\A^1 \times X)$. As one would expect that the Picard group is the degree one part of some conjectural Chow cohomology ring $\ch^*$, one is tempted to draw the conclusion that failure of homotopy invariance is to be expected from cohomology rings of singular varieties.

\subsection{On the characteristic 0 assumption}

Many results of this article need the characteristic 0 assumption. Like the construction of $d \Omega_*$ in \cite{LS2}, the construction of our bivariant algebraic cobordism $\Omega^*$ makes perfect sense without any restrictions on the characteristic. However, without the characteristic 0 assumption, the homology theory $d \Omega^*$ does not satisfy desirable properties, and as the main strategy of this paper is to show that desirable properties of $\Omega^*$ follow from those of $d \Omega_*$, we don't obtain many results in positive characteristic.

For example, outside characteristic 0, we can not construct a theory of Chern classes in the sense of Section \fref{ChernClassSect}. But the theory is needed in order to prove the Theorem \fref{KTheoryCobordism} ($K$-theory is a specialization of $\Omega^*$), and in order to prove the extension of Grothendieck-Riemann-Roch as in Theorem \fref{DerivedGRR}. 

However, it seems likely that a similar strategy would work after replacing $d \Omega_*$ by some other model for derived algebraic bordism (perhaps coming from motivic homotopy theory), and then imitating the proofs and constructions from Sections \fref{MainConstructionSect} to \fref{BivariantChowSect}.

\subsection{Acknowledgements}

The author would like to thank his advisor Kalle Karu for suggesting the topic and for helpful discussions. The author would also like to thank Samuel Bach and Kai Behrend for answering discussions about derived algebraic geometry, and Shoji Yokura for multiple comments. Both the anonymous referees gave comments that have significantly improved the exposition. Special thanks are due to the second referee who brought into the authors attention the work done in \cite{Ful2}, and without whom there would be neither Proposition \fref{CHRationallyStableUnderLKE} nor Question \fref{IsCHStableUnderLKE}.

\section{Background}
In this section, we review the technical background material necessary for the rest of the paper: namely, the general framework of bivariant theories, as well as some results from derived algebraic geometry. 

\subsection{Bivariant theories}\label{BivariantTheories}

The notion of a \emph{bivariant theory} was introduced by Fulton and MacPherson in \cite{FM} to unify multiple Grothendieck-Riemann-Roch type theorems. As a simultaneous generalization of both homology and cohomology theories, the bivariant theory assigns an Abelian group to any morphism between the spaces of interest.

More rigorously, suppose we have a category $\mathcal{C}$, which is assumed to have a final object $pt$ and all fibre products, together with a class of \emph{confined maps} which is closed under composition and pullback, and contains all isomorphisms (e.g. proper morphisms) The category should also come equipped with a class of Cartesian squares called \emph{independent squares} that are ''closed under composition'' and ''contain all identities'', the two conditions meaning that whenever the two smaller squares in
\begin{center}
\begin{tikzpicture}[scale=2]
\node (A1) at (0,1) {$X'$};
\node (B1) at (1,1) {$Y'$};
\node (A2) at (0,0) {$X$};
\node (B2) at (1,0) {$Y$};
\node (C2) at (2,0) {$Z$};
\node (C1) at (2,1) {$Z'$};
\node (OR) at (2.6,0.5) {or};
\node (A1') at (4.2,-0.5) {$X$};
\node (B1') at (4.2,0.5) {$Y$};
\node (A2') at (3.2,-0.5) {$X'$};
\node (B2') at (3.2,0.5) {$Y'$};
\node (C2') at (3.2,1.5) {$Z'$};
\node (C1') at (4.2,1.5) {$Z$};
\path[every node/.style={font=\sffamily\small}]
(A1) edge[->] (A2)
(B1) edge[->] (B2)
(A1) edge[->] (B1)
(A2) edge[->] (B2)
(B2) edge[->] (C2)
(B1) edge[->] (C1)
(C1) edge[->] (C2)
(A1') edge[<-] (A2')
(B1') edge[<-] (B2')
(A1') edge[<-] (B1')
(A2') edge[<-] (B2')
(B2') edge[<-] (C2')
(B1') edge[<-] (C1')
(C1') edge[<-] (C2')
;
\end{tikzpicture}
\end{center}
are independent, then so is the outer square, and that all the squares of form
\begin{center}
\begin{tikzpicture}[scale=2]
\node (A1) at (0,1) {$X$};
\node (B1) at (1,1) {$Y$};
\node (A2) at (0,0) {$X$};
\node (B2) at (1,0) {$Y$};
\node (A1') at (2.2,1) {$X$};
\node (B1') at (3.2,1) {$X$};
\node (A2') at (2.2,0) {$Y$};
\node (B2') at (3.2,0) {$Y$};
\node (AND) at (1.6,0.5) {and};
\path[every node/.style={font=\sffamily\small}]
(A1) edge[->] node[right]{$1$} (A2)
(B1) edge[->] node[right]{$1$} (B2)
(A1) edge[->] node[above]{$f$} (B1)
(A2) edge[->] node[above]{$f$} (B2)
(A1') edge[->] node[right]{$f$} (A2')
(B1') edge[->] node[right]{$f$} (B2')
(A1') edge[->] node[above]{$1$} (B1')
(A2') edge[->] node[above]{$1$} (B2')
;
\end{tikzpicture}
\end{center}
are independent. Note that the definition does not make any assumptions about the symmetry of such squares. The ''transpose'' of an independent square is not assumed to be independent. However, in the practical cases of interest in algebraic geometry, and certainly in all cases appearing in this paper, the independent squares are indeed symmetric in this sense, and hence the reader need not worry about such subtleties.

The following examples are perhaps the most relevant that occur in the classical literature:

\begin{ex}
Suppose $\mathcal{C}$ is the category of quasi-projective schemes over a ground field $k$. We can now choose proper morphisms to be the confined morphisms, and either all fibre squares, or only the Tor-independent ones, to be the independent squares. It is easy to check that both possible choices of independent squares satisfy the above conditions.
\end{ex}

A bivariant theory $\B^*$ on such a category (the extra structure is implicit from now on) assigns a (graded) Abelian group $\B^*(X \to Y)$ to \emph{any morphism} $X \to Y$, no confinedness required. The purpose of the special classes of maps and squares is to give rise to operations of the bivariant theory. Namely, whenever a morphism $X \to Y$ factors as $X \stackrel{f}{\to} X' \to Y$, and $f$ is confined, we have an induced \emph{pushforward morphism}
\begin{equation*}
f_*: \B^*(X \to Y) \to \B^*(X' \to Y),
\end{equation*}
and whenever we have an independent square 
\begin{center}
\begin{tikzpicture}[scale=2]
\node (A1) at (0,1) {$X'$};
\node (B1) at (1,1) {$X$};
\node (A2) at (0,0) {$Y'$};
\node (B2) at (1,0) {$Y$};
\path[every node/.style={font=\sffamily\small}]
(A1) edge[->] (A2)
(B1) edge[->] node[right]{$f$} (B2)
(A1) edge[->] (B1)
(A2) edge[->] node[above]{$g$} (B2)
;
\end{tikzpicture}
\end{center}
we have an induced \emph{pullback morphism} 
\begin{equation*}
g^*:  \B^*(X \to Y) \to \B^*(X' \to Y')
\end{equation*}
These operations are assumed to be functorial in the obvious sense.

The third bivariant operation is the \emph{bivariant product}. Whenever we have composable maps $X \to Y \to Z$, we have a bilinear map
\begin{equation*}
\bullet: \B^*(X \to Y) \times \B^*(Y \to Z) \to \B^*(X \to Z). 
\end{equation*}
This product is assumed to be associative and unital, the latter requirement meaning that in $\B^*(X \stackrel{1}{\to} X)$ there is an element $1_X$ which we assume to act by identities in all left and right multiplications.

These operations are required to satisfy four compatibility properties. In all diagrams encircled symbols near arrows will denote elements of bivariant groups, and symbols without circles will, as usual, simply give a name for the corresponding maps.
\begin{itemize}
\item[($A_{12}$)] \emph{Pushforward and the bivariant product commute.} In the following situation 
\begin{center}
\begin{tikzpicture}[scale=2]
\node (A1) at (-1,1) {$X$};
\node (B1) at (0,1) {$Y$};
\node (C1) at (1,1) {$Z$};
\node (D1) at (2,1) {$W$};
\path[every node/.style={font=\sffamily\small}]
(A1) edge[->] node[above]{$f$} (B1)
(B1) edge[->] (C1)
(C1) edge[->] node[yshift=0.4cm,circle,draw,inner sep=0pt, minimum size=0.5cm]{$\beta$} (D1)
(A1) edge[->,bend right] node[yshift=-0.4cm,circle,draw,inner sep=0pt, minimum size=0.5cm]{$\alpha$} (C1)
;
\end{tikzpicture}
\end{center}
we have that $f_*(\alpha \bullet \beta) = f_*(\alpha)\bullet\beta$.

\item[($A_{13}$)] \emph{Pullback and the bivariant product commute.} In the following situation
\begin{center}
\begin{tikzpicture}[scale=2]
\node (A2) at (1,2) {$X$};
\node (B2) at (1,1) {$Y$};
\node (C2) at (1,0) {$Z$};
\node (A1) at (0,2) {$X'$};
\node (B1) at (0,1) {$Y'$};
\node (C1) at (0,0) {$Z'$};
\path[every node/.style={font=\sffamily\small}]
(A1) edge[->] (A2)
(B1) edge[->] node[above]{$h'$} (B2)
(C1) edge[->] node[above]{$h$} (C2)
(A1) edge[->] (B1)
(B1) edge[->] (C1)
(A2) edge[->] node[xshift=0.4cm,circle,draw,inner sep=0pt, minimum size=0.5cm]{$\alpha$} (B2)
(B2) edge[->] node[xshift=0.4cm,circle,draw,inner sep=0pt, minimum size=0.5cm]{$\beta$} (C2)
;
\end{tikzpicture}
\end{center}
whenever the two small squares (and hence the large square) are independent, we have that $h^*(\alpha \bullet \beta) = h'^*(\alpha) \bullet h^*(\beta)$.

\item[($A_{23}$)] \emph{Pullback and pushforward commute.} In the following situation
\begin{center}
\begin{tikzpicture}[scale=2]
\node (A2) at (1,2) {$X$};
\node (B2) at (1,1) {$Y$};
\node (C2) at (1,0) {$Z$};
\node (A1) at (0,2) {$X'$};
\node (B1) at (0,1) {$Y'$};
\node (C1) at (0,0) {$Z'$};
\path[every node/.style={font=\sffamily\small}]
(A1) edge[->] (A2)
(B1) edge[->] (B2)
(C1) edge[->] node[above]{$h$} (C2)
(A1) edge[->] node[right]{$f'$} (B1)
(B1) edge[->] (C1)
(A2) edge[->] node[right]{$f$} (B2)
(B2) edge[->] (C2)
(A2) edge[->,bend left] node[xshift=0.4cm,circle,draw,inner sep=0pt, minimum size=0.5cm]{$\alpha$} (C2)
;
\end{tikzpicture}
\end{center}
whenever $f$ is confined and the big square and the lower small square are independent, we have that $h^*f_*(\alpha) = f'_*h^*(\alpha)$.

\item[($A_{123}$)] \emph{Push-pull formula.} In the following situation
\begin{center}
\begin{tikzpicture}[scale=2]
\node (A2) at (1,2) {$X$};
\node (B2) at (1,1) {$Y$};
\node (A1) at (0,2) {$X'$};
\node (B1) at (0,1) {$Y'$};
\node (B3) at (2,1) {$Z$};
\path[every node/.style={font=\sffamily\small}]
(A1) edge[->] node[above]{$g'$} (A2)
(B1) edge[->] node[above]{$g$} (B2)
(A1) edge[->] (B1)
(A2) edge[->] node[xshift=0.4cm,,circle,draw,inner sep=0pt, minimum size=0.5cm]{$\alpha$} (B2)
(B2) edge[->] (B3)
(B1) edge[->,bend right] node[yshift=-0.4cm,circle,draw,inner sep=0pt, minimum size=0.5cm]{$\beta$} (B3)
;
\end{tikzpicture}
\end{center}
whenever the square is independent and the morphism $g$ is confined, we have that $g'_*(g^*(\alpha) \bullet \beta) = \alpha \bullet g_*(\beta)$.
\end{itemize}

From the requirements for the classes of confined morphisms and independent squares it follows that the groups $\B^*(X \to pt)$ form a covariant functor for \emph{confined morphisms} induced by the bivariant pushforward, and $\B^*(X \stackrel{1}{\to} X)$ form a contravariant functor with respect to \emph{all morphism} induced by the bivariant pullback associated to independent squares. The former of these is the \emph{homology theory} induced by the bivariant theory, and similarly the latter one is the \emph{cohomology theory}. These are denoted by $\B_*(X)$ and $\B^*(X)$ respectively, with grading conventions depending on the context. One immediately observes that bivariant product makes the cohomology groups $\B^*(X)$ rings, and the homology groups $\B_*(X)$ modules over the cohomology. This was one of the motivations behind the notion of bivariant groups, and corresponds to the fact that many theories come as a pair of cohomology and homology theory such that the cohomology is a ring (cup product) acting on the homology (cap product). 

To arrive at the notion of an \emph{orientation}, we need another special class of maps of $\cc$ this time called \emph{specialized morphisms} (\cite{Yo1}, Definition 2.2 (i)), which is assumed to contain all isomorphisms and to be closed under composition (but not necessarily under pullback). An \emph{orientation} with respect to this class of morphism is an assignment $\theta (f) \in \B^*(X \stackrel f \to Y)$ for all specialized morphisms $f$. We will also assume that this orientation is multiplicative, i.e., if we have composable specialized maps $f$ and $g$, then $\theta(f) \bullet \theta(g) = \theta(g \circ f)$, and $\theta(\mathrm{id}) = 1$. The following extra condition is satisfied by some, but not all bivariant theories.

\begin{defn}[Cf. \cite{Yo1} Definition 2.2 (ii)]\label{NiceOrientation}
Suppose that the specialized morphisms are stable under pullbacks in independent squares. We say that a orientation $\theta$ is \emph{stable under pullbacks} or \emph{nice}, if for every independent square
\begin{center}
\begin{tikzpicture}[scale=2]
\node (A1) at (0,1) {$X'$};
\node (B1) at (1,1) {$X$};
\node (A2) at (0,0) {$Y'$};
\node (B2) at (1,0) {$Y$};
\path[every node/.style={font=\sffamily\small}]
(A1) edge[->] node[right]{$f'$} (A2)
(B1) edge[->] node[right]{$f$} (B2)
(A1) edge[->] (B1)
(A2) edge[->] node[above]{$g$} (B2)
;
\end{tikzpicture}
\end{center}
with $f$ (and therefore also $f'$) specialized, we have $g^*(\theta(f)) = \theta(f')$.
\end{defn}

Let us now look at some classical examples.

\begin{ex}[Bivariant intersection theory, \cite{Ful} Chapter 17]
Let $\mathcal{C}$ be again the category of quasi-projective $k$-varieties together with proper morphisms as confined morphisms and all fibre squares as independent squares. The \emph{operational bivariant Chow theory} is the bivariant theory that associates to a morphism $f: X\to Y$ the graded Abelian group, so that a degree $i$ element $c$ of $\op CH^*(X \to Y)$ (a degree $i$ bivariant class) is
\begin{enumerate}
\item a \emph{collection of homomorphisms} $c_g: \ch_*(Y') \to \ch_{*-i}(X')$ for all morphisms $g: Y' \to Y$, where $X'$ is the pullback of $X$ along $g$;

\item such that the morphisms $c_g$ commute with pullbacks along flat morphisms, pushforwards along proper morphisms and refined l.c.i. pullbacks (see Definition 17.1 of \emph{loc. cit.} for a more details).
\end{enumerate}
The bivariant operations are defined as follows:
\begin{enumerate}

\item If $X \to Z$ factors through a proper morphism $f: X \to Y$, and $c \in \op \ch^*(X \to Z)$ is a bivariant class, then the bivariant ppushforward $f_*(c) \in \op\ch^*(Y \to Z)$ is given by the collection
$$f_*(c)_g := f'_* c_g,$$
where $f': X' \to Z'$ is the pullback of $f$ in the Cartesian diagram induced by $g: Y' \to Y$.

\item Given a Cartesian square
\begin{center}
\begin{tikzpicture}[scale=2]
\node (A1) at (0,1) {$X'$};
\node (B1) at (1,1) {$Y'$};
\node (A2) at (0,0) {$X$};
\node (B2) at (1,0) {$Y$};
\path[every node/.style={font=\sffamily\small}]
(A1) edge[->] (A2)
(B1) edge[->] node[right]{$g$} (B2)
(A1) edge[->] (B1)
(A2) edge[->] (B2)
;
\end{tikzpicture}
\end{center}
and a bivariant class $c \in \op\ch^*(X \to Y)$, then the bivariant pullback $g^*(c) \in \op\ch^*(X \to Y)$ is given by the collection
$$g^*(c)_h := c_{g \circ h},$$
where $h$ is a morphism $Y'' \to Y$.

\item The bivariant product is induced by composition of homomorphisms in the obvious way.
\end{enumerate}
In this example the induced homology groups $\op\ch_*(X)$ are isomorphic with the usual Chow groups $\ch_*(X)$. The induced cohomology rings $\op\ch^*(X)$ are usually called \emph{operational Chow rings}.

Moreover, by letting the class of specialized morphisms to be l.c.i. morphisms of pure relative dimension, we can orient the above theory. Indeed, let $f: X \to Y$ be an l.c.i. morphism of relative dimension $d$. We can now define the orientation $\theta^{\ch}(f) \in \ch^{-d}(X \to Y)$ by 
$$\theta^{\ch}(f)_g := f_g^!,$$
where the right hand side is the \emph{refined Gysin pullback morphism} associated to the morphism $g: Y' \to Y$, defined as in \cite{Ful} Section 6.6. 

As l.c.i. morphisms are not stable under pullbacks, the orientation $\theta^{\ch}$ is not nice. However, l.c.i. morphisms are stable under Tor-independent pullbacks, and moreover, restricting $\theta^{\ch}$ to the subtheory whose independent squares are the Tor-independent fibre squares makes it stable under pullbacks.
\end{ex}

The next example is a more genuine of a bivariant theory in the sense that it's definition is completely geometric.

\begin{ex}[Bivariant $K^0$, \cite{FM} Part II Chapter 1]
Let $\mathcal{C}$ be the category of quasi-projective schemes over $k$ with proper morphisms as confined morphisms and Tor-independent Cartesian squares as independent squares. Given a morphism $f: X \to Y$, a complex $\mathcal{F}_\bullet$ of quasi-coherent sheaves on $X$ is called \emph{$f$-perfect} if for any (and therefore all) factorization of $f$ as $p \circ i$ with $p: Z \to Y$ smooth and $i: X \to Z$ a closed embedding, the pushforward $i_*(\mathcal{F}_\bullet)$ is quasi-isomorphic to a finite complex of vector bundles on $Z$ (i.e., $i_*(\mathcal{F}_\bullet)$ is a \emph{perfect complex}). We can define $K^0(X \to Y)$ to be the Abelian group generated by quasi-isomorphism classes of $f$-perfect complexes modulo the relation
$$[\mathcal{F}_\bullet] = [\mathcal{F}'_\bullet] + [\mathcal{F}''_\bullet]$$
whenever we have a cofibre sequence
$$\mathcal{F}'_\bullet \to \mathcal{F}_\bullet \to \mathcal{F}''_\bullet.$$

These groups give rise to a bivariant theory where pushforwards along proper morphisms and pullbacks along Tor-independent squares are given by the derived pushforward and derived pullback of sheaves respectively. Moreover, if $[\mathcal{F}_\bullet] \in K^0(X \stackrel{f}{\to} Y)$ and $[\mathcal{G}_\bullet] \in K^0(Y \to Z)$, then we can define
$$[\mathcal{F}_\bullet] \bullet [\mathcal{G}_\bullet] := [\mathcal{F}_\bullet \otimes^L_{\OO_X} Lf^* \mathcal{G}_\bullet] \in K^0(X \to Z).$$
The induced cohomology rings and homology groups of the bivariant $K^0$ are naturally identified as the Grothendieck ring $K^0(X)$ of vector bundles, and the Grothendieck group $G_0(X)$ of coherent sheaves respectively.

Moreover, we can orient the bivariant algebraic $K$-theory along morphisms of finite Tor-dimension (e.g. l.c.i. morphisms). It is known that $f: X \to Y$ has finite Tor-dimension if and only if the structure sheaf $\OO_X$ is an $f$-perfect complex. Therefore, we can define an orientation $\theta^{K}$ by the formula
$$\theta^{K}(f) := [\OO_X] \in K^0(X \to Y).$$
The orientation $\theta^K$ is clearly stable under pullback. 
\end{ex}


Any element $\alpha \in \B^*(X \stackrel f \to Y)$ gives rise to a \emph{Gysin pullback morphism} $\alpha^!: \B_*(Y) \to \B_*(X)$ by sending $\beta$ to $\alpha \bullet \beta$, and a \emph{Gysin pushforward morphism} $\alpha_!: \B^*(X) \to \B^*(Y)$ by sending $\gamma$ to $f_*(\gamma \bullet \alpha)$ whenever $f$ is confined. In the special cases when $f$ is also specialized, we will use the shorthand notation $f^!$ and $f_!$ for the Gysin morphisms $\theta(f)^!$ and $\theta(f)_!$ respectively, and it is an exercise in bivariant formalism to show that these operations will yield functorial wrong way maps $f^!$ for homology and $f_!$ cohomology.

\begin{ex}
In $\op\ch^*$, the Gysin pullback morphisms $f^!: \ch_*(Y) \to \ch_*(X)$ induced by the orientation of the l.c.i. morphism $f: X \to Y$ coincides with the usual Gysin pullback constructed in Chapter 6 of \cite{Ful}. 

For bivariant $K^0$, given a morphism $f: X \to Y$ of finite Tor-dimension (which, we recall, should be proper for Gysin pushforward), the induced Gysin morphisms $f_!: K^0(X) \to K^0(Y)$ and $f^!: G_0(Y) \to G_0(X)$ are given by formulas
$$[\mathcal{F}_\bullet] \mapsto [Rf_*\mathcal{F}_\bullet]$$
and
$$[\mathcal{G}_\bullet] \mapsto [Lf^*\mathcal{G}_\bullet]$$
respectively. 
\end{ex}

\subsubsection*{Some properties of bivariant theories}

The axioms of a bivariant theory allow great generality, and hence it is beneficial to discuss some properties that the theories we will be interested about exhibit. The first of these is commutativity.

\begin{defn}\label{CommutativeTheory}
A bivariant theory $\B^*$ is \emph{commutative}, if, whenever we are in the situation
\begin{center}
\begin{tikzpicture}[scale=2]
\node (A2) at (1,2) {$X$};
\node (B2) at (1,1) {$Y$};
\node (A1) at (0,2) {$X'$};
\node (B1) at (0,1) {$Y'$};
\path[every node/.style={font=\sffamily\small}]
(A1) edge[->] (A2)
(B1) edge[->] node[above]{$g$} node[yshift=-0.4cm,circle,draw,inner sep=0pt, minimum size=0.5cm]{$\beta$} (B2)
(A1) edge[->] (B1)
(A2) edge[->] node[left]{$f$} node[xshift=0.4cm,circle,draw,inner sep=0pt, minimum size=0.5cm]{$\alpha$} (B2)
;
\end{tikzpicture}
\end{center}
and the square (and its transpose) are independent, the products $g^*(\alpha) \bullet \beta$ and $f^*(\beta) \bullet \alpha$ coincide in $\B^*(X' \to Y)$.
\end{defn}

\begin{rem}
The bivariant theories $\op\ch^*$ and $K^0$ we introduced earlier are bivariant, as will be all the other bivariant theories we are going to consider in this article. 
\end{rem}

\begin{defn}\label{StrongDefn}
Let $\B^*$ be a bivariant theory with an orientation $\theta$. We say that the orientation $\theta(f)$ of $f : Y \to Z$ is \emph{strong} if the maps 
\begin{equation*}
- \bullet \theta(f): \B^*(X \to Y) \to \B^*(X \to Z)
\end{equation*}
are isomorphisms for all $X \to Y$.
\end{defn}

The following proposition shows some non-trivial interplay between commutativity, having nice orientations in the sense of Definition \fref{NiceOrientation}, and having strong orientations along some subclass of morphisms.

\begin{prop}\label{StrongOrientations}
Let $\B^*$ be a commutative bivariant theory with nice orientation. Suppose there is a subclass of specialized morphisms closed under compositions and pullbacks called \emph{specialized projections} satisfying that whenever $g \circ f = 1$ and $g$ is a specialized projection, $f$ is specialized. Moreover, suppose that all Cartesian squares pulling back a specialized projection are independent, and that the independent squares satisfy the \emph{right cancellation property}. Now the orientation of $\B^*$ is strong along specialized projections.
\end{prop}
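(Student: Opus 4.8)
The plan is to produce an explicit two‑sided inverse of $-\bullet\theta(g)\colon\B^*(X\xrightarrow{h}Y)\to\B^*(X\xrightarrow{gh}Z)$ for a specialized projection $g\colon Y\to Z$ and an arbitrary $h\colon X\to Y$. First I would pull $g$ back along $gh$ to form the square
\[
\begin{array}{ccc}
P & \xrightarrow{\ \pi\ } & Y\\
{\scriptstyle q}\downarrow & & \downarrow{\scriptstyle g}\\
X & \xrightarrow{gh} & Z ,
\end{array}
\]
which is independent because it pulls back the specialized projection $g$. Since specialized projections are closed under pullback, $q$ is again a specialized projection; the tautological section $s\colon X\to P$ satisfies $q\circ s=\mathrm{id}_X$ and $\pi\circ s=h$, so by the hypothesis that a left inverse of a specialized projection is specialized, $s$ is specialized and $\theta(s)\in\B^*(X\xrightarrow{s}P)$ exists. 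I would also record, via niceness of $\theta$, that $(gh)^*\theta(g)=\theta(q)$, and (for later use) that pulling $g$ back along $g$ itself yields $W=Y\times_Z Y$ with its two projections and a specialized diagonal section $\delta$, with $P\cong X\times_Y W$ and $q\circ s$, $\pi\circ s$ matching the diagonal.

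I would then set $\Phi(\gamma):=\theta(s)\bullet g^*(\gamma)$, where $g^*(\gamma)\in\B^*(P\xrightarrow{\pi}Y)$ is the pullback of $\gamma\in\B^*(X\xrightarrow{gh}Z)$ along $g$. The identity $(-\bullet\theta(g))\circ\Phi=\mathrm{id}$ is the easy half: using associativity, commutativity of $\B^*$ applied to the square above (and its transpose), niceness, and multiplicativity of $\theta$,
\begin{align*}
\Phi(\gamma)\bullet\theta(g) &= \theta(s)\bullet\bigl(g^*(\gamma)\bullet\theta(g)\bigr) = \theta(s)\bullet\bigl((gh)^*\theta(g)\bullet\gamma\bigr)\\
&= \theta(s)\bullet\theta(q)\bullet\gamma = \theta(q\circ s)\bullet\gamma = \theta(\mathrm{id}_X)\bullet\gamma = \gamma .
\end{align*}

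The substantial point is the other identity $\Phi\circ(-\bullet\theta(g))=\mathrm{id}$, i.e. $\theta(s)\bullet g^*(\alpha\bullet\theta(g))=\alpha$ for $\alpha\in\B^*(X\xrightarrow{h}Y)$. Here I would compute $g^*(\alpha\bullet\theta(g))$ by factoring the fibre square through which this pullback is taken as a vertical composite of two independent squares — the lower one pulling $\theta(g)$ back along $g$ (producing an orientation of a projection $W\to Y$, by niceness), the upper one pulling $\alpha$ back along a projection $W\to Y$ — and then splitting the pullback of a product using axiom $(A_{13})$. After identifying $X\times_Y W$ with $P$ and tracking the involution of $W$ that interchanges its two projections and fixes $\delta$, the expression $\theta(s)\bullet g^*(\alpha\bullet\theta(g))$ collapses to $\alpha$ by a further use of commutativity together with $q\circ s=\mathrm{id}_X$ and the fact that each projection $W\to Y$ restricts to $\mathrm{id}_Y$ along $\delta$.

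The hard part will be the bookkeeping with independent squares, not any conceptual issue: since the transpose of an independent square need not be independent, each auxiliary square used — the one pulling $\gamma$ (resp. $\alpha\bullet\theta(g)$) back along $g$, the two squares of the $(A_{13})$ factorisation, and the squares to which commutativity is applied — has to be recognised as lying in the allowed class, and it is precisely here that the hypotheses are used (specialized projections closed under composition and pullback, all squares pulling back a specialized projection independent, and the right cancellation property), in particular forcing the relevant squares to be symmetric so that commutativity and the pullbacks in question are legitimate. The non‑identity case of $h$ is handled uniformly through the fibre product $P$ and the section $s$, which is why these, rather than merely the diagonal of $g$, enter the construction; the case $h=\mathrm{id}_Y$ is the cleanest instance, with $P=W$ and $s=\delta$.
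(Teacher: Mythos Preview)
Your proposal is correct and follows precisely the route the paper indicates: the paper's proof reads in its entirety ``The proof is just a formalization of the proof of Proposition 17.4.2 in \cite{Ful}'', and your construction of the inverse $\Phi(\gamma)=\theta(s)\bullet g^*(\gamma)$ via the graph section $s\colon X\to P=X\times_Z Y$, together with the two verifications using commutativity, $(A_{13})$, niceness of $\theta$, and the identity $q\circ s=\mathrm{id}_X$, is exactly that formalization. Your remark that the substantive work lies in checking independence of the auxiliary squares (using the hypothesis on squares pulling back specialized projections together with right cancellation, in particular to legitimate the square with $\delta$ and $s$ needed for the second identity) is on point and is where the abstract hypotheses earn their keep.
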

\begin{rem}
The right cancellation property simply requires that when we have either
\begin{center}
\begin{tikzpicture}[scale=2]
\node (A1) at (0,1) {$X'$};
\node (B1) at (1,1) {$Y'$};
\node (A2) at (0,0) {$X$};
\node (B2) at (1,0) {$Y$};
\node (C2) at (2,0) {$Z$};
\node (C1) at (2,1) {$Z'$};
\node (OR) at (2.6,0.5) {or};
\node (A1') at (4.2,-0.5) {$X$};
\node (B1') at (4.2,0.5) {$Y$};
\node (A2') at (3.2,-0.5) {$X'$};
\node (B2') at (3.2,0.5) {$Y'$};
\node (C2') at (3.2,1.5) {$Z'$};
\node (C1') at (4.2,1.5) {$Z$};
\path[every node/.style={font=\sffamily\small}]
(A1) edge[->] (A2)
(B1) edge[->] (B2)
(A1) edge[->] (B1)
(A2) edge[->] (B2)
(B2) edge[->] (C2)
(B1) edge[->] (C1)
(C1) edge[->] (C2)
(A1') edge[<-] (A2')
(B1') edge[<-] (B2')
(A1') edge[<-] (B1')
(A2') edge[<-] (B2')
(B2') edge[<-] (C2')
(B1') edge[<-] (C1')
(C1') edge[<-] (C2')
;
\end{tikzpicture}
\end{center}
and when the large square and the right (lower) square are independent, then so is the remaining little square. 
\end{rem}
\begin{proof}
The proof is just a formalization of the proof of Proposition 17.4.2. in \cite{Ful}.
\end{proof}

\begin{rem}
The Proposition 17.4.2. in \cite{Ful} is a special case of the above proposition. Indeed, one may regard the operational Chow-theory as a commutative bivariant theory having orientations along l.c.i. morphisms, and pullbacks along all Tor-independent squares. The orientations are stable under pullbacks in such squares. It also is easily verified that Tor-independence satisfies the right cancellation property as defined above. Smooth maps play the part of specialized projections: indeed, the pullback of a smooth morphism is smooth, and whenever $X \stackrel g \to X' \stackrel f \to X = 1$ and $f$ is smooth, then the (necessarily) closed immersion $g$ is locally a complete intersection, and therefore an l.c.i. morphism. Hence all the assumptions of the above proposition hold. It does not matter that one may regard $\op \ch^*$ as a bivariant theory having pullbacks along all Cartesian squares, and that the orientations are no longer necessarily stable under pullbacks in such squares.
\end{rem}

There is also an immediate generalization of Poincaré duality for objects $X$ where the map $X \to pt$ to the final object of the category is a specialized projection. The orientation of $X \to pt$ induces an isomorphism $\B^*(X \to X) \to \B^*(X \to pt)$, as is shown by the above proposition. What is not immediately clear however, is that in this situation we have two ways to define the a product on the isomorphic Abelian groups, and that the products coincide. Indeed, in this situation both of the projections $X \times X \to X$ are specialized projections, and hence the diagonal embedding $\Delta: X \to X \times X$ is specialized. We can define the \emph{intersection product} $\frown$ on $\B_*(X)$ by setting $\alpha_1 \frown \alpha_2 = \Delta^!(\alpha_1 \times \alpha_2)$. In this case we have 

\begin{prop}[Poincaré duality]\label{PoincareDuality}
Let $\B^*$ be a commutative bivariant theory as in \fref{StrongOrientations}. Suppose $\pi: X \to pt$ is a specialized projection. Now the map
\begin{equation*}
\bullet \theta(\pi) : (\B^*(X), \bullet) \to (\B_*(X), \frown)
\end{equation*}
is an isomorphism of rings.
\end{prop}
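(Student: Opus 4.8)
The plan is to show that the map $D\colon\B^*(X)\to\B_*(X)$ given by $D(\alpha)=\alpha\bullet\theta(\pi)$ is simultaneously an additive bijection and multiplicative, and then to conclude by transport of structure. Bijectivity is immediate: $\pi$ is a specialized projection, so by Proposition~\fref{StrongOrientations} its orientation $\theta(\pi)$ is strong in the sense of Definition~\fref{StrongDefn}, and the member of the resulting family of isomorphisms ${-}\bullet\theta(\pi)$ with source $\B^*(X\xrightarrow{1}X)=\B^*(X)$ is exactly $D$. It therefore remains to prove $D(\alpha\bullet\beta)=D(\alpha)\frown D(\beta)$ for all $\alpha,\beta\in\B^*(X)$. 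Granting this, surjectivity of $D$ forces $\frown$ to coincide with the product on $\B_*(X)$ transported from $(\B^*(X),\bullet)$ along $D$; hence $(\B_*(X),\frown)$ is a commutative unital ring with unit $D(1_X)=\theta(\pi)$, and $D$ is an isomorphism of rings.

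Two elementary facts, both consequences of commutativity, will do the work. First, a \emph{projection formula for orientations}: if $g\colon Y\to Z$ is specialized and $c\in\B^*(Z)$, then $\theta(g)\bullet c=g^*(c)\bullet\theta(g)$ in $\B^*(Y\xrightarrow{g}Z)$. This follows from Definition~\fref{CommutativeTheory} applied to the Cartesian square whose two vertical edges are $g$ and whose two horizontal edges are $\mathrm{id}_Z$ (this square and its transpose are independent, being of the two forms guaranteed to be so), together with the fact that pullback along an identity is the identity. Second, applying the same definition to the trivial square shows that $\B^*(X)$ is a commutative ring. I also record that, writing $p_1,p_2\colon X\times X\to X$ for the projections and $\Delta$ for the diagonal, each $p_i$ is a specialized projection (a base change of $\pi$), hence $\Delta$ is specialized because $p_1\circ\Delta=\mathrm{id}_X$; and every square that will appear below — base changes of $\pi$ along $\pi$, and squares with a pair of identity edges — is independent, so all the operations below are defined. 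No pushforwards occur, so confinedness plays no role.

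Then I would carry out the following computation. Using the definition $\alpha_1\frown\alpha_2=\theta(\Delta)\bullet(\alpha_1\times\alpha_2)$ and the description of the exterior product as $\alpha_1\times\alpha_2=\overline{\alpha_2}\bullet\alpha_1$, where $\overline{(-)}$ denotes pullback along $\pi\colon X\to pt$ (any other standard convention differs only by the coordinate swap, which is harmless since $\bullet$ is commutative), one expands
\[ D(\alpha)\frown D(\beta)=\theta(\Delta)\bullet\overline{\beta\bullet\theta(\pi)}\bullet\alpha\bullet\theta(\pi). \]
Axiom $(A_{13})$ distributes $\overline{(-)}$ over the product, and Definition~\fref{NiceOrientation} identifies the pullback of $\theta(\pi)$ with $\theta(p_1)$, so the right-hand side equals $\theta(\Delta)\bullet p_2^*(\beta)\bullet\theta(p_1)\bullet\alpha\bullet\theta(\pi)$ with $p_2^*(\beta)$ the ordinary cohomology pullback. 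Now the projection formula rewrites $\theta(\Delta)\bullet p_2^*(\beta)$ as $\Delta^*(p_2^*\beta)\bullet\theta(\Delta)=\beta\bullet\theta(\Delta)$ (functoriality of cohomology pullback and $p_2\circ\Delta=\mathrm{id}_X$), multiplicativity of the orientation collapses $\theta(\Delta)\bullet\theta(p_1)=\theta(p_1\circ\Delta)=\theta(\mathrm{id}_X)=1_X$, and finally unitality together with the commutativity of $\B^*(X)$ yields $\beta\bullet\alpha\bullet\theta(\pi)=\alpha\bullet\beta\bullet\theta(\pi)=D(\alpha\bullet\beta)$, as required.

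The mathematical content is confined to the two uses of commutativity and the single use of niceness; I expect the only genuine obstacle to be bookkeeping — fixing once and for all the conventions for the exterior product and for $\frown$, and verifying at each step that the square in question is independent — which is routine under the standing hypotheses inherited from Proposition~\fref{StrongOrientations}.
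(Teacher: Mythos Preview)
Your argument is correct and is precisely the kind of formalization the paper gestures at by citing Fulton's Corollary~17.4: bijectivity from Proposition~\fref{StrongOrientations}, then the multiplicativity computation using commutativity (to move $\theta(\Delta)$ past $p_2^*(\beta)$), niceness (to identify $\pi^*\theta(\pi)$ with $\theta(p_1)$), and multiplicativity of the orientation (to collapse $\theta(\Delta)\bullet\theta(p_1)$ to $1_X$). There is nothing to add; your proof simply spells out what the paper leaves implicit.
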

\begin{proof}
The proof is merely a formalization of Corollary 17.4 in \cite{Ful}.
\end{proof}
 
\subsubsection*{Universal bivariant theory}
The main contribution of Yokura's paper \cite{Yo1} was to give a simple construction for the universal bivariant theory of a certain type in a large number of interesting cases. Namely, we must assume that the specialized morphisms are stable under pullbacks, and restrict our attention to theories that are equipped with a nice orientation in the sense of Definition \fref{NiceOrientation}. Moreover, we require all Cartesian squares obtained by pulling back confined morphisms to be independent.

Under these conditions, we have a simple construction for $\M^*$ -- the universal bivariant theory equipped with nice orientation -- which we are going to outline shortly. The universality means that for any other such a bivariant theory $\B^*$, there is a unique \emph{Grothendieck transformation} $\omega : \M^* \to \B^*$, i.e., a collection of maps 
\begin{equation*}
\omega_{X \to Y}: \M^*(X \to Y) \to \B^*(X \to Y)
\end{equation*}
compatible with all three bivariant operations in an obvious sense, and which respect orientations.

As promised, the construction of the bivariant theory $\M^*$ is very simple. Namely, the group $\M^* (X \to Y)$ is the free Abelian group generated by the isomorphism classes of confined maps $V \to X$ such that $V \to X \to Y$ is specialized. From now on, the class of such a cycle is denoted by $[V \to X]$.

The construction of the bivariant operations is fairly simple as well. If $X \to Y$ factorizes as $X \stackrel f \to X' \to Y$, where $f$ is confined, then the pushforward is defined by linearly extending
\begin{equation*}
f_*[V \to X] = [V \to X'].
\end{equation*}
If we have a map $Y' \stackrel g \to Y$ such that the induced Cartesian square is independent, then we may define the pullback $g^*$ as the linear extension of map
\begin{equation*}
g^*[V \to X] = [V' \to X']
\end{equation*}
where $V'$ and $X'$ are the pullbacks of $V$ and $X$ respectively. Note how the property of specialized morphisms being stable under pullback becomes important for the definition of pullback.

The third bivariant operation, the bivariant product, is the most complicated to define. Suppose we have elements
\begin{equation*}
\alpha = [V \to X] \in \M^*(X \to Y)
\end{equation*}
and
\begin{equation*}
\beta = [W \to Y] \in \M^*(Y \to Z).
\end{equation*}
In order to define their bivariant product $\alpha \bullet \beta \in \M^*(X \to Z)$, we begin with the following pullback diagram
\begin{center}
\begin{tikzpicture}[scale=2]
\node (A1) at (0,1) {$V'$};
\node (B1) at (1,1) {$X'$};
\node (A2) at (0,0) {$V$};
\node (B2) at (1,0) {$X$};
\node (C1) at (2,1) {$W$};
\node (C2) at (2,0) {$Y$};
\node (D2) at (3,0) {$Z$};

\path[every node/.style={font=\sffamily\small}]
(A1) edge[->] (A2)
(B1) edge[->] (B2)
(A1) edge[->] (B1)
(A2) edge[->] (B2)
(B2) edge[->] (C2)
(B1) edge[->] (C1)
(C1) edge[->] (C2)
(C2) edge[->] (D2)
;
\end{tikzpicture}
\end{center}
The map $V' \to W$ is specialized as a pullback of a specialized morphism, and therefore $V' \to Z$ is specialized as well. On the other hand both $V' \to X'$ and $X' \to X$ are confined as pullbacks of confined morphisms, and hence $V' \to X$ is confined. Hence $[V' \to X]$ is an element of the bivariant group $\M^*(X \to Z)$, and extending bilinearly yields us the bivariant product.

We will not prove that these operations satisfy the properties necessary for $\M^*$ to be called a bivariant theory; the interested reader is encouraged to consult \cite{Yo1}. Given another such a bivariant theory $\B^*$ the unique map $\M^* \to \B^*$ is easy to describe. Namely, given a bivariant cycle $[V \stackrel \alpha \to X] \in \M^*(X \stackrel f \to Y)$, it is easy to see that it must be sent to 
\begin{equation*}
\alpha_* (f \circ \alpha)^* 1_Y \in \B^*(X \to Y).
\end{equation*}
This map is, in fact, well defined, as is shown by Yokura. We also note that the theory is commutative. The proofs are omitted.

\subsection{Derived algebraic geometry and quasi-smooth morphisms}

The purpose of this subsection is to recall results from derived algebraic geometry necessary for the rest of the paper. We are not going to be very detailed; the interested reader is encouraged to consult the vast literature devoted to the subject for further details. First, however, we have to remind ourselves what derived algebraic geometry is. It is a homotopical enhancement of usual algebraic geometry in the sense that a \emph{derived scheme} is locally modeled by by spectra associated to simplicial commutative rings, which are considered up to homotopy equivalence (but remembering homotopies). If working over a field $k$ of characteristic 0, then the homotopy theory of simplicial commutative $k$-algebras agrees with those of connective commutative $k$ differential graded algebras, and connective $\mathbb{E}_\infty$ ring spectra over $k$, and therefore one may use either of these two to give local models for derived schemes without any change to the theory. 

One of the novelties of derived algebraic geometry is that the derived schemes form an \emph{$\infty$-category}. Therefore, given derived schemes $X$ and $Y$, the collection of morphisms $\map(X, Y)$ is not just a set but a space (again, considered suitably up to homotopy equivalence), which we are going to call the \emph{space of maps from $X$ to $Y$}. These mapping space often arise from a simplicial model category that gives rise to the $\infty$-category. Any scheme is also a derived scheme, and given two classical schemes $X$ and $Y$, then the mapping space $\map(X,Y)$ is naturally equivalent to the discrete space (i.e., to the set) $\map_{\mathrm{classical}}(X,Y)$ of maps of schemes in the classical sense. We say that the theory of schemes sits fully faithfully inside the theory of derived schemes. There is also a way going from derived schemes to schemes, and it is based on the fact that given a simplicial commutative ring $A$, the set of its path components $\pi_0(A)$ naturally inherits the structure of a commutative ring. Given a derived scheme $X$, the spectra of the path component rings glue together to give rise to a scheme $\tau_0(X)$ --- the \emph{(classical) truncation} of $X$. We note that $\tau_0$ is a functor from derived schemes to classical schemes.

The derived analogue of the usual fibre product of schemes is the \emph{homotopy fibre product} $X \times_Z^h Y$ of derived schemes. On the underlying classical schemes this is just the usual fibre product of schemes
\begin{equation*}
\tau_0 (X \times_Z^h Y) = \tau_0(X) \times_{\tau_0(Z)} \tau_0(Y)
\end{equation*}
but we must note that the homotopy fibre product of classical schemes needs not to coincide with the fibre product (this requires the diagram to be Tor-independent). As one would expect, the homotopy fibre product is affine locally modeled by the \emph{derived tensor product} of simplicial rings.

Any derived scheme $X$ has a naturally associated $\infty$-category $\QCoh(X)$ of \emph{quasi-coherent sheaves on $X$}. On an affine open $\Spec(A) \subset X$, this is modeled by spectrum objects in the category of modules over the simplicial ring $A$. If $X$ is a classical scheme, then $\QCoh(X)$ is naturally identified with an $\infty$-categorical enhancement of the unbounded derived category of quasi-coherent sheaves. Moreover, in this case being a spectrum object coincides with having non vanishing negative homology sheaves (an the idea is the same in the derived case).

For any map $X \to Y$ of derived schemes we have the \emph{cotangent complex} $L_{X/Y} \in \QCoh(X)$. The cotangent complex is stable under pullbacks: if
\begin{center}
\begin{tikzpicture}[scale=2]
\node (A2) at (1,2) {$Y'$};
\node (B2) at (1,1) {$Y$};
\node (A1) at (0,2) {$X'$};
\node (B1) at (0,1) {$X$};
\path[every node/.style={font=\sffamily\small}]
(A1) edge[->] (A2)
(B1) edge[->]  (B2)
(A1) edge[->] (B1)
(A2) edge[->] (B2)
;
\end{tikzpicture}
\end{center}
is homotopy Cartesian, then the derived pullback of $L_{X/Y}$ along the map $X' \to X$ coincides with $L_{X'/Y'}$. A notion of crucial importance for the paper is that of a quasi-smooth morphism. A morphism $f: X \to Y$ of derived schemes is \emph{quasi-smooth} if the truncation $\tau_0 f$ is locally of finite presentation (in the classical sense), and if the cotangent complex $L_{X/Y}$ is perfect and has Tor-amplitude in $[1,0]$, i.e., locally $L_{X/Y}$ is equivalent to the mapping cone of two free and finitely generated free quasi-coherent sheaves. It is a fact that a map between classical schemes is quasi-smooth if and only if it is l.c.i. The main benefit of the derived notion is that the homotopy pullback of a quasi-smooth morphism is quasi-smooth. As one would expect from the classical case, if we have maps $X \to Y \to Z$ of derived schemes, and $Y \to Z$ is smooth, then $X \to Z$ is quasi-smooth if and only if $X \to Y$ is quasi-smooth.

As a final note, we need to discuss some finer details of the theory of derived schemes. First of all, the concept of a \emph{vector bundle} on $X$ makes perfect sense: it is merely a quasi-coherent sheaf that is locally free. As in classical case, these can also be thought as geometric vector bundles over $X$, and we will not make a distinction between these two viewpoints from now on. 

A \emph{quasi-projective} derived scheme is a derived scheme admitting a morphism $X \to \Proj^n$ such that it induces a locally closed embedding on truncation. As the underlying topological spaces of a derived scheme and its truncation coincide, this is the same as requiring the existence of maps $X \to U \to \Proj^n$, where the first morphism is a \emph{closed embedding} (meaning that it truncates to a closed embedding), and the second morphism is an open embedding. Choice of a locally closed embedding of $X$ to $\Proj^n$ determines a line bundle $\mathcal{O}(1)$ on $X$. Note that the notation $\mathcal{O}(1)$ is very abusive, as the line bundle is in no way independent of the chosen embedding. Nonetheless, it has some rather nice properties, of which the following proposition is an example
\begin{prop}\label{GlobalGeneration}
Let $E$ be a vector bundle on a derived scheme $X$ and $\mathcal{O}(1)$ a line bundle as above. For $n \gg 0$ the twisted vector bundle $E \otimes \mathcal{O}(n)$ is globally generated.
\end{prop}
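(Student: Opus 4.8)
The plan is to reduce the statement to the case of $\Proj^n$, where Serre's classical theorems are available, by passing to a projective compactification and pushing forward; for a sheaf $\mathcal G$ write $\mathcal G(n):=\mathcal G\otimes\OO(n)$. Fix a locally closed embedding $X\hookrightarrow V\hookrightarrow\Proj^n$ with $V$ open and $i\colon X\hookrightarrow V$ closed, and let $Z\subseteq\Proj^n$ be the scheme-theoretic closure of $\tau_0 X$. Extending the sheaf of connective algebras $i_*\OO_X$ on $V$ to a sheaf of connective algebras on $\Proj^n$ with underlying classical scheme $Z$ (a derived analogue of extending a coherent algebra across an open immersion into a noetherian scheme), one obtains a projective derived scheme $\bar X$ with $\tau_0\bar X=Z$, an open immersion $j\colon X\hookrightarrow\bar X$, and $\OO_{\bar X}(1)|_X\simeq\OO_X(1)$, where $\OO_{\bar X}(1)$ comes from the closed embedding $\bar\iota\colon\bar X\hookrightarrow\Proj^n$. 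Extend the vector bundle $E$ further to an almost perfect connective sheaf $\mathcal E$ on $\bar X$; since any surjection $\OO_{\bar X}^{\oplus L}\to\mathcal E(n)$ restricts along $j$ to a surjection $\OO_X^{\oplus L}\to E(n)$, it suffices to treat $(\bar X,\mathcal E)$. Finally, $\bar\iota_*\mathcal E$ is almost perfect and connective on $\Proj^n$ (a closed embedding is proper, and proper pushforward preserves pseudo-coherence), $\bar\iota_*(\mathcal E(n))\simeq(\bar\iota_*\mathcal E)(n)$ by the projection formula, and the counit $\bar\iota^*\bar\iota_*\to\mathrm{id}$ is an isomorphism on $\pi_0$; so a surjection $\OO_{\Proj^n}^{\oplus L}\to(\bar\iota_*\mathcal E)(n)$ pulls back under $\bar\iota^*$ and composes with the counit to a surjection $\OO_{\bar X}^{\oplus L}\to\mathcal E(n)$. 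Everything thus reduces to: an almost perfect connective sheaf $\mathcal M$ on $\Proj^n$ has $\mathcal M(n)$ globally generated for $n\gg0$.

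\emph{The case of $\Proj^n$.} A map $\OO_{\Proj^n}^{\oplus L}\to\mathcal M(n)$ is surjective if and only if it is surjective on $\pi_0$, and such maps are $L$-tuples in $\pi_0\Gamma(\Proj^n,\mathcal M(n))$; hence $\mathcal M(n)$ is globally generated as soon as (a) the coherent sheaf $\pi_0(\mathcal M(n))=(\pi_0\mathcal M)(n)$ is globally generated on $\Proj^n$, and (b) a generating set of it lies in the image of the restriction $\pi_0\Gamma(\Proj^n,\mathcal M(n))\to H^0(\Proj^n,(\pi_0\mathcal M)(n))$. Part (a) is Serre's global generation theorem. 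For (b), the fibre sequence $\tau_{\ge1}\mathcal M(n)\to\mathcal M(n)\to(\pi_0\mathcal M)(n)$ and the long exact sequence of $\Gamma(\Proj^n,-)$ reduce the surjectivity of that restriction to the vanishing of $\pi_{-1}\Gamma\big(\Proj^n,\tau_{\ge1}\mathcal M(n)\big)$; and the hypercohomology spectral sequence $E_2^{p,q}=H^p\big(\Proj^n,\pi_q(-)\big)\Rightarrow\pi_{q-p}\Gamma(\Proj^n,-)$ together with $H^{>n}(\Proj^n,-)=0$ assembles this group from the finitely many terms $H^{k+1}\big(\Proj^n,(\pi_k\mathcal M)(n)\big)$ with $1\le k\le n$ (using $\pi_k\tau_{\ge1}\mathcal M\cong\pi_k\mathcal M$ for $k\ge1$). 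Each $\pi_k\mathcal M$ is coherent, so classical Serre vanishing on $\Proj^n$ kills all of these simultaneously for $n$ large. Choosing $n$ beyond all the finitely many bounds that have appeared completes the argument.

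\emph{The main obstacle.} The one substantial point is the set of reductions above: constructing the projective derived compactification $\bar X\supseteq X$ with its compatible ample line bundle, and extending $E$ to an almost perfect connective sheaf on it. The classical prototypes — extending a coherent algebra, respectively a coherent sheaf, across an open immersion into a noetherian scheme — are standard, but carrying them out in the derived setting, where one must extend connective $\mathbb{E}_\infty$-algebras and their modules together with all of their homotopy, is where I expect the real work to be; everything downstream is classical Serre theory plus bounded spectral-sequence bookkeeping. (If one were willing to assume $\tau_0 X$ projective from the start, the compactification step disappears and the proof is just the second paragraph applied directly on the closed embedding $X\hookrightarrow\Proj^n$.)
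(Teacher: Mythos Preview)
The paper does not actually prove this proposition: it explicitly defers the argument to \cite{An}, remarking only that the result ``follows from a spectral sequence computation.'' Your proposal is therefore not being compared against a full proof in the paper, but it is entirely consistent with that hint --- the core of your argument on $\Proj^n$ is precisely a hypercohomology spectral sequence combined with classical Serre vanishing, which is almost certainly what the author has in mind.

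Your $\Proj^n$ step is correct as written (the range $1\le k\le n$ is slightly generous; $1\le k\le n-1$ suffices, but this is harmless). The one place where your outline carries real content beyond the spectral sequence is, as you yourself flag, the construction of a \emph{derived} projective compactification $\bar X$ together with an extension of $E$ to it. This is doable but heavier than necessary: rather than extending the connective $\mathbb{E}_\infty$-algebra $i_*\OO_X$ across the open immersion $V\hookrightarrow\Proj^n$ (and then extending $E$ over the resulting $\bar X$), you can bypass $\bar X$ entirely. Push $E$ forward along the closed embedding $i:X\hookrightarrow V$ to obtain the almost perfect connective sheaf $i_*E$ on $V$, and then extend \emph{that module} directly to an almost perfect connective sheaf $\mathcal M$ on $\Proj^n$. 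Extension of almost perfect complexes across open immersions of noetherian (derived) schemes is a standard fact and avoids the algebra-extension problem altogether. A surjection $\OO_{\Proj^n}^{\oplus L}\to\mathcal M(n)$ then restricts to $V$ and, via the counit $i^*i_*E\to E$ (which is an isomorphism on $\pi_0$), yields the desired surjection $\OO_X^{\oplus L}\to E(n)$. With this simplification, the ``main obstacle'' you identify largely evaporates, and what remains is exactly the spectral sequence bookkeeping you have already carried out.
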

A vector bundle $E$ is \emph{globally generated} if we can choose a morphism $\OO_X^{\oplus n} \to E$ from a free sheaf which becomes surjective on the truncation. This is the same as choosing global sections $s_1,...,s_n$ of $E$ so that the truncated global sections $\pi_0(s_1), ..., \pi_0(s_n)$ generate the classical vector bundle $\pi_0(E)$ on the truncation $\tau_0 X$. Another important fact we are going to need is the following:
\begin{prop}\label{KTheory}
Let $X$ be a quasi-projective derived scheme. Now $K^0(X)$ is naturally equivalent to the Grothendieck group of vector bundles, i.e., it is the Abelian group generated by equivalence classes of vector bundles modulo exact sequences.
\end{prop}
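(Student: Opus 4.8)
The plan is to reduce the statement to the analogous fact for quasi-projective classical schemes, which is the standard result that on a quasi-projective scheme every perfect complex is globally a finite complex of vector bundles, so that $K^0$ computed from perfect complexes agrees with the Grothendieck group of vector bundles. The first step is to recall that $K^0(X)$ is by definition the Grothendieck group of the stable $\infty$-category of perfect complexes $\mathrm{Perf}(X) \subset \QCoh(X)$, so what must be shown is that every perfect complex on $X$ can be represented by a finite complex (equivalently, a finite iterated extension) of vector bundles, and that two such representatives with the same class differ by the exact-sequence relation. The natural map from the Grothendieck group of vector bundles to $K^0(X)$ sends $[E]$ to the class of $E$ viewed as a perfect complex concentrated in degree $0$; I would show this map is both surjective and injective.

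For surjectivity, I would argue by induction on the Tor-amplitude of a perfect complex $P$. Using Proposition \fref{GlobalGeneration}, for $n \gg 0$ the top (lowest, in homological terms) nonzero cohomology of $P$, suitably twisted, is globally generated, which lets me build a vector bundle $E_0$ together with a map $E_0 \to P$ that is surjective on the relevant homotopy sheaf; the fibre of this map is again perfect with strictly smaller Tor-amplitude, and the induction closes. The key input here is precisely the global generation statement just proved, together with the fact that the class of $P$ in $K^0$ decomposes along the cofibre sequence $F \to E_0 \to P$ (up to shift). For injectivity, I would use the same resolution procedure applied to a vector bundle together with an auxiliary relation: if a $\Z$-linear combination of vector bundles becomes zero in $K^0(X)$, one expresses the relation by finitely many cofibre sequences of perfect complexes, resolves each term simultaneously by vector bundles compatibly with the maps, and thereby rewrites the relation purely in terms of short exact sequences of vector bundles. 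This is exactly the classical bookkeeping of Thomason–Trobaugh / SGA 6, transported verbatim since the relevant resolutions live in a fixed range of degrees.

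The one genuinely derived point to check is that the relevant finiteness and generation statements are insensitive to the derived structure: a vector bundle on $X$ is the same data as a vector bundle on a fixed affine chart modeled over the simplicial ring, and global generation is tested on the truncation $\tau_0 X$, which is a classical quasi-projective scheme. So the ampleness of $\mathcal{O}(1)$ that powers the induction is the classical ampleness on $\tau_0 X$, and the structural facts about $\mathrm{Perf}(X)$ (stability, the behaviour of Tor-amplitude under cofibre sequences and shifts) are formal. The main obstacle I anticipate is not any single step but the injectivity bookkeeping: one must choose the simultaneous vector-bundle resolutions of a diagram of perfect complexes coherently enough that the resulting identity in the Grothendieck group of vector bundles is manifest, and care is needed to make all the twists $\mathcal{O}(n)$ uniform across the finitely many complexes and maps involved. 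This is routine but slightly delicate, and I would either spell it out or, more economically, cite the classical statement for $\tau_0 X$ and observe that the resolution algorithm never leaves the subcategory of complexes of vector bundles, so the classical proof applies without change.
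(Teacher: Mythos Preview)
The paper does not actually prove this proposition: immediately after stating it, the author writes that a proof ``would take us too much afield'' and defers it to \cite{An}, noting only that it ``follows quite easily from \fref{GlobalGeneration}.'' Your proposal is therefore being compared against a one-line hint rather than a written argument, and your outline is entirely consistent with that hint --- you use global generation to resolve perfect complexes by vector bundles via induction on Tor-amplitude, which is the standard route and presumably the one taken in \cite{An}.

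One point worth flagging: Proposition \fref{GlobalGeneration} as stated in the paper concerns only vector bundles, whereas your inductive step requires a map from a (twist of a) trivial bundle surjecting onto the extreme homotopy sheaf of an arbitrary perfect complex $P$, and that sheaf need not itself be locally free. What you actually need is that for $n \gg 0$ the complex $P(n)$ admits a surjection from a free module in the relevant degree --- a derived form of Serre's generation theorem for coherent sheaves, not just for vector bundles. This is no harder and is doubtless among the preliminary results of \cite{An}, but it is not literally what Proposition \fref{GlobalGeneration} provides, so you should either state and use the stronger form or make the extension explicit. Your injectivity sketch via simultaneous resolution of a finite diagram of perfect complexes is the right idea and is indeed routine once the resolution machinery is in place; your caution about making the twists uniform is well placed but not a genuine obstacle.
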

Note that the above proposition is not completely trivial, as $K^0(X)$ of a derived scheme is usually defined as the Abelian group generated by the equivalence classes of perfect complexes modulo cofibre sequences. Proving either of these two propositions would take us too much afield, and make this paper unnecessarily long, so the proofs are going to appear in \cite{An}. Proposition \fref{KTheory} follows quite easily from \fref{GlobalGeneration}, which in turn follows from a spectral sequence computation.

 third fact we are going to need is the following Proposition.
\begin{prop}\label{Grassmannian}
Let $X$ be a derived scheme. The space of maps from $X$ to the finite Grassmannian $\Gr(r,n)$ is equivalent to the space of surjections
$$\OO_X^{\oplus n} \to E$$
of vector bundles (in other words the map should truncate to a surjective morphism). More precisely, consider the subcategory $\mathcal{C}_X \subset \QCoh(X)_{\OO_X^{\oplus n}/}$ of quasi-coherent sheaves under $\OO_X^{\oplus n}$ so that the target is a vector bundle, the structure morphism is surjective, and so that the morphisms in $\mathcal{C}_X$ are equivalences of under $\OO_X^{\oplus n}$. Then $\mathcal{C}_X$ is an $\infty$-groupoid, or, in other words, a space. The above statement translates into the existence of a natural equivalence
$$\Hom(X, \Gr(r,n)) \simeq \mathcal{C}_X.$$
\end{prop}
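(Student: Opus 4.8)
The plan is to recognize the functor $X \mapsto \mathcal{C}_X$ as the functor of points of the derived scheme obtained by gluing together the usual affine charts of the Grassmannian. Note first that the assertion ``$\mathcal{C}_X$ is an $\infty$-groupoid'' is essentially a matter of definition: we have declared the morphisms of $\mathcal{C}_X$ to be the equivalences in $\QCoh(X)_{\OO_X^{\oplus n}/}$, and an $\infty$-category all of whose morphisms are invertible is a space. The content is therefore the natural equivalence $\Hom(X, \Gr(r,n)) \simeq \mathcal{C}_X$.

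I would begin by reducing to the case $X = \Spec R$ affine. Both sides are Zariski sheaves of spaces on the $\infty$-category of derived schemes: the left-hand side because $\Gr(r,n)$ is a scheme, the right-hand side because quasi-coherent sheaves satisfy flat (hence Zariski) descent, because being locally free and being surjective on truncations are Zariski-local conditions, and because the $\infty$-groupoid of such data glues along a Zariski cover (a limit of spaces). Moreover, a natural transformation between the two that is defined on affines and compatible with restriction extends uniquely to all derived schemes. Hence it suffices to produce, naturally in $\Spec R$, an equivalence $\Hom(\Spec R, \Gr(r,n)) \simeq \mathcal{C}_{\Spec R}$.

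For the affine case I would use the standard open affine cover of $\Gr(r,n)$ by charts $U_S \cong \A^{N}$, with $N = r(n-r)$, indexed by the $r$-element subsets $S \subseteq \{1,\dots,n\}$ (using the convention that $\Gr(r,n)$ parametrizes rank-$r$ quotients; the complementary convention is entirely symmetric). On the functor side, let $\mathcal{C}^S_X \subseteq \mathcal{C}_X$ be the subspace of those surjections $q\colon \OO_X^{\oplus n} \to E$ for which the composite $\OO_X^{S} \hookrightarrow \OO_X^{\oplus n} \xrightarrow{q} E$ is an equivalence; using such an equivalence to split $q$, one sees that $q$ is then the datum of a map $\OO_X^{S^c} \to \OO_X^{S}$, i.e. of an element of $\Gamma(X,\OO_X)^{N} = \Hom(X, \A^{N})$, giving a natural equivalence $\mathcal{C}^S_X \simeq \Hom(X, U_S)$. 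One then checks that the $\mathcal{C}^S_X$ are representable \emph{open} subfunctors of $\mathcal{C}_X$, that they \emph{cover} $\mathcal{C}_X$, and that the transition maps between them agree with those defining $\Gr(r,n)$. Assembling these facts, the functor $X \mapsto \mathcal{C}_X$ is exhibited as the colimit, in Zariski sheaves on derived affines, of the representables $\Hom(-,U_S)$ glued along precisely the gluing data of $\Gr(r,n)$; since that colimit is by construction (the functor of points of) $\Gr(r,n)$, we obtain the desired equivalence, manifestly natural in $X$.

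The main obstacle is the bookkeeping of the previous paragraph, especially verifying openness and the covering property in the derived setting. Openness amounts to the observation that, after choosing a local trivialization of $E$, the locus in $X$ where $\OO_X^S \to E$ fails to be an equivalence is the vanishing locus of the determinant of an $r \times r$ matrix of functions, hence a closed subscheme whose complement is a quasi-compact open over which $q$ restricts to a chart; the covering property amounts to the fact that, Zariski-locally on $X$, a surjection from $\OO_X^{\oplus n}$ onto a vector bundle of the relevant rank splits through some coordinate subbundle $\OO_X^S$, which one deduces from the corresponding statement over a local ring together with Nakayama's lemma. One should also note that the derived structure on $X$ causes no trouble here: for a morphism of vector bundles, being surjective or being an equivalence is detected after applying $\tau_0$, so the entire chart analysis is the classical one carried out over the (possibly derived) base. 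Alternatively, one could bypass the explicit charts by invoking the Artin--Lurie representability theorem to see directly that $X \mapsto \mathcal{C}_X$ is representable by a derived scheme and then comparing it with $\Gr(r,n)$.
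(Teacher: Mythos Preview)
Your proposal is correct and follows essentially the same approach as the paper: the paper's proof simply says to imitate the classical argument (citing Theorem 9.7.4 of EGA I, second edition), with the one derived-specific observation that $\A^1$ represents the functor $X \mapsto \Gamma(X,\OO_X)$ in derived algebraic geometry, which is precisely the fact you use when identifying $\mathcal{C}^S_X$ with $\Hom(X,\A^{r(n-r)})$. Your write-up is considerably more detailed than the paper's one-line sketch, but the underlying strategy---Zariski-local reduction to the standard affine charts $U_S$ and the observation that the chart analysis is insensitive to the derived structure because surjectivity and invertibility of maps of vector bundles are detected on $\tau_0$---is the same.
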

\begin{proof}
One can just imitate a classical proof like the proof of Theorem 9.7.4 in the second edition of EGA I. The key point to keep in mind is that in derived algebraic geometry $\A^1$ represents the functor associating to $X$ the space of global sections of the structure sheaf $\OO_X$ (this is not true in spectral algebraic geometry, i.e., in the algebraic geometry modeled locally on $\mathbb{E}_\infty$-ring spectra, unless working over a field of characteristic 0).
\end{proof}

\begin{cor}\label{GlobalGenerationToGrassmannian}
Let $X$ be a derived scheme, $E$ a vector bundle of rank $r$ on $X$, and let $s_1,...,s_n$ be global sections of $E$ that generate it. Such a data induces a morphism $X \to \Gr(r,n)$ which is unique up to homotopy (i.e., they lie in the same path component in the mapping space).
\end{cor}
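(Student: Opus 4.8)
The plan is to deduce Corollary \ref{GlobalGenerationToGrassmannian} directly from Proposition \ref{Grassmannian}. First I would observe that the data of global sections $s_1,\dots,s_n$ of $E$ is exactly the data of a morphism $s\colon \OO_X^{\oplus n}\to E$ of quasi-coherent sheaves on $X$; this is just the universal property of the free sheaf $\OO_X^{\oplus n}$ together with the fact that $\map_{\QCoh(X)}(\OO_X, E)$ is the space of global sections of $E$. The hypothesis that the $s_i$ generate $E$ means precisely, by the discussion following Proposition \ref{GlobalGeneration}, that $\pi_0(s_1),\dots,\pi_0(s_n)$ generate the classical vector bundle $\pi_0(E)$ on $\tau_0 X$, i.e. that $\tau_0(s)$ is surjective, which is the definition of $s$ being a surjection of vector bundles in the sense of Proposition \ref{Grassmannian}.

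Next I would package this: the pair $(\OO_X^{\oplus n}\xrightarrow{s} E)$ is an object of the category $\mathcal{C}_X\subset \QCoh(X)_{\OO_X^{\oplus n}/}$ appearing in the statement of Proposition \ref{Grassmannian} — the target $E$ is a vector bundle (here of rank $r$), the structure map $s$ is surjective on truncations, and $\mathcal{C}_X$ has only equivalences as morphisms. Since $\mathcal{C}_X$ is an $\infty$-groupoid, i.e. a space, and Proposition \ref{Grassmannian} supplies a natural equivalence $\Hom(X,\Gr(r,n))\simeq \mathcal{C}_X$, the point $(\OO_X^{\oplus n}\xrightarrow{s} E)\in\mathcal{C}_X$ corresponds under this equivalence to a point of $\Hom(X,\Gr(r,n))$, that is, to a morphism $X\to\Gr(r,n)$. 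This is the desired induced morphism.

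Finally, for the uniqueness up to homotopy: an equivalence of spaces is in particular a bijection on $\pi_0$, so the path component of the point of $\mathcal{C}_X$ determined by $s$ determines, and is determined by, the path component of the corresponding point of $\Hom(X,\Gr(r,n))$. Thus any two morphisms $X\to\Gr(r,n)$ arising from the same generating data $(s_1,\dots,s_n)$ — equivalently, from equivalent objects of $\mathcal{C}_X$ under $\OO_X^{\oplus n}$ — lie in the same path component of the mapping space, which is exactly the assertion ``unique up to homotopy''. I do not expect any serious obstacle here; the corollary is essentially a translation of Proposition \ref{Grassmannian} into the language of generating sections, and the only mild point to be careful about is matching the two notions of ``surjection'' (the sheaf-theoretic surjectivity of $\tau_0(s)$ versus generation of $\pi_0(E)$ by the $\pi_0(s_i)$), which is handled by the remarks following Proposition \ref{GlobalGeneration}.
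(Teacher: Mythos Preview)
Your proposal is correct and is precisely the intended deduction: the paper states this as an immediate corollary of Proposition~\ref{Grassmannian} with no explicit proof, and your argument is exactly the unpacking of that implication---identifying the data $(s_1,\dots,s_n)$ with a surjection $\OO_X^{\oplus n}\to E$, hence with a point of $\mathcal{C}_X$, and reading off the map and its uniqueness up to homotopy from the equivalence $\Hom(X,\Gr(r,n))\simeq\mathcal{C}_X$.
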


\section{Construction of $\Omega^*$}\label{BivariantConstruction}

This section is the heart of the paper. We extend the (derived) algebraic bordism $d\Omega_*$ groups to a full bivariant theory. Throughout the section we will work over a field $k$ of characteristic 0, and after the first subsection, we will be working in the (infinity-)category of quasi-projective derived $k$-schemes.

\subsection{Failure of the Proper-Smooth $\Omega^*$}
 
In the construction of algebraic cobordism, we begin with a construction which has pullbacks \emph{a priori} only for smooth morphisms. That is to say, smooth pullbacks can be defined on the level of cycles, and it is easy to see see that all the imposed relations are respected by these morphisms. On the other hand the construction of l.c.i. pullbacks more general is technical and messy business, taking most of the book \cite{LM} and the follow-up article \cite{Lev2} by Levine. The purpose of this section is to show that the most straightforward way of trying to make a bivariant version $\Omega^*$ of the construction of Levine and Morel can not produce a bivariant theory that has good properties. The properties we are interested in are:

\begin{enumerate}
\item the associated homology groups $\Omega^*(X \to pt)$ should be isomorphic to the algebraic bordism $\Omega_{-*}(X)$ of Levine-Morel;

\item we should have an orientation $\theta(f) \in \Omega^{-d}(X \stackrel{f}{\to} Y)$ for any l.c.i. morphism $f$ of relative dimension $d$; the induced Gysin morphisms
$$f^! = \theta(f) \bullet : \Omega_*(Y) \to \Omega_{*+d}(X)$$
should coincide with the l.c.i. pullbacks of Levine-Morel;

\item the above orientation should be strong (in the sense of Definition \fref{StrongDefn}) for smooth morphisms; as a special case, we obtain a natural Poincar\'e duality isomorphism
$$\bullet \theta(\pi_X): \Omega^*(X) \to \Omega_{d-*}(X)$$
for $\pi_X: X \to pt$ smooth of dimension $d$. 
\end{enumerate} 
While it is easy to give a geometric construction for a bivariant theory satisfying 1., we will see that the conditions 2. and 3. are not so easily satisfied.

We first roughly recall the construction of algebraic bordism groups $\Omega_*(X)$ by Levine and Morel given in \cite{LM}. Namely, we start with the free Abelian group generated by isomorphism classes of cycles of form
\begin{equation*}
[V \to X; \mathscr{L}_1, ..., \mathscr{L}_r]
\end{equation*}
where $V$ is a smooth scheme mapping properly to $X$ and $\mathscr{L}_i$ are line bundles on $V$ (the order of the line bundles does not matter), modulo the relation that disjoint union corresponds to sum. The purpose of the line bundles is to make applying the first Chern class operation a purely formal process. The algebraic bordism groups are then obtained by imposing relations to the above cobordism cycle groups.

It would therefore seem that the following strategy would be a natural one to follow:
\begin{enumerate}
\item[I.] start with Yokura's universal oriented bivariant theory $\mathbb{OM}^{\mathrm{proper}}_\mathrm{smooth}$ on quasi-projective schemes (see \cite{Yo1} Section 4) with proper morphisms as confined morphisms, smooth morphisms as specialized morphisms and all fibre squares as independent squares;

\item[II.] come up with the right bivariant analogues of the relations of Levine-Morel and verify that the quotient bivariant theory has the expected properties 1-3.
\end{enumerate}
We note that $\mathbb{OM}^{\mathrm{proper}}_\mathrm{smooth}(X \to Y)$ is the free Abelian group generated by cobordism cycles
$$[V \to X; \Li_1,...,\Li_r]$$
so that the map $V \to X$ is proper, and the composition $V \to Y$ is smooth, and $\Li_i$ are line bundles on $V$, and so it really is a natural bivariant extension of the cobordism cycle groups of Levine and Morel.

Such a construction is doomed to fail for a simple reason: if $X \to Y$ is a morphism which is not dominant, then no smooth morphism $V \to Y$ can factor through it, and therefore 
$$\mathbb{OM}^{\mathrm{proper}}_\mathrm{smooth}(X \to Y) \cong 0.$$
This means that we cannot equip any quotient theory of $\mathbb{OM}^{\mathrm{proper}}_\mathrm{smooth}$ with orientations along all l.c.i. morphisms. Suppose, for example, that $s: pt \to \Proj^1$ is a section of the structure map $\pi: \Proj^1 \to pt$. Now $\theta(s)$ is forced to be $0$, as it is the only element of $\mathbb{OM}^{\mathrm{proper}}_\mathrm{smooth}(pt \to \Proj^1)$, and hence 
$$\theta(\mathrm{Id}_{pt}) = \theta(s) \bullet \theta(\pi) = 0 \bullet \theta(\pi) = 0.$$
Even if one gave up the requirement that an orientation $\theta$ should be multiplicative, any $\theta$ would induce trivial Gysin pullbacks for all regular embeddings (of positive codimension), which is certainly false for the theory of Levine and Morel.

Even though constructing a good bivariant algebraic cobordism as a quotient of Yokura's universal theory $\mathbb{OM}^{\mathrm{proper}}_\mathrm{smooth}$ is hopeless, one might still hope to get a reasonable contravariant version of $\Omega_*$ as a quotient of the cohomology rings associated to $\mathbb{OM}^{\mathrm{proper}}_\mathrm{smooth}$.
Proving this seems to be highly nontrivial, and we close the subsection with the following related question:

\begin{que}
Let $X$ be a scheme. There is a natural map
$$\eta: \mathbb{OM}^{\mathrm{proper}}_\mathrm{smooth}(X \to X) \to \Omega^*(X \to X)$$
defined by formula
$$[V \to X; \Li_1,...,\Li_r] \mapsto c_1(\Li_1) \bullet \cdots \bullet c_1(\Li_r) \bullet [V \to X],$$
where the right hand side is the bivariant algebraic cobordism constructed in Section \fref{MainConstruction}, and $c_1(\Li_i) \in \Omega^1(X \to X)$ is the first Chern class of the line bundle $\Li_i$ as constructed in Section \fref{ChernClassSect} (see also Theorem \fref{ChernClasses}). Is $\eta$ surjective? Is there an easy description for the kernel of $\eta$?
\end{que}

We note that asking for the surjectivity of $\eta$ is essentially the same as asking that any quasi-smooth morphism $V \to X$ with a classical target $X$ is, up to cobordism, a global complete intersection of some smooth morphism $W \to X$.

\subsection{Derived algebraic bordism $d\Omega_*$ and its universality}\label{AlgebraicBordismSect}
The purpose of this section is to recall the construction of derived algebraic bordism $d\Omega_*$ from \cite{LS2}, and to provide it with a convenient universal property. We begin with the equivalence classes of proper morphisms $V \to X$ where $V$ is quasi-smooth. The related cycle is naturally denoted by $[V \to X]$. Arguably the most important relation is the \emph{homotopy fibre relation}. Namely, suppose we have a quasi-smooth $W$ mapping to $X \times \Proj^1$. In this case we identify the cobordism classes of the homotopy fibres $W \times^h_{X \times \Proj^1} (X \times 0)$ and $W \times^h_{X \times \Proj^1} (X \times \infty)$ with each other. 

Unfortunately the homotopy fibre relation is not enough, or this at least seems to be the case. The theory constructed above does have well defined functorial pullbacks along all quasi-smooth morphisms, and pushforwards for proper morphisms, and hence it does have first Chern classes. However, it is not clear that the Chern classes satisfy any formal group law, for instance. In order to remedy this, we use the old trick of tensoring first with the Lazard ring $\Laz$, and then enforcing the universal formal group law for globally generated line bundles on smooth schemes.

Alas, this still isn't enough! Suppose we have a simple normal crossing divisor $E$ on a smooth scheme $W$. Using the formal group law, we can express the class of $[E \to W]$ as a $\Laz$-linear combination of the prime divisors of $E$, their intersections, and Chern classes of line bundles on $W$. The latter expression can be lifted onto the support of $E$ to a class we are going to denote by $\eta_E \in \Omega_*(E)$. On the other hand we also have the fundamental class $1_E = [E \to E]$ on $E$. Even though their pushforwards from $E$ to $W$ agree, there is no reason why they would agree as elements of $\Omega_*(E)$ -- the formal group law doesn't really say anything about cycles of codimension 0. Hence we have to enforce that $1_E = \eta_E$. The purpose of these so called \emph{SNC relations} is to make sure that the two different ways of defining l.c.i. pullbacks (or rather, quasi-smooth pullbacks) coincide

We note here that last two of the families of relations are \emph{not} stable under pullback or pushforward. Hence, at each point, we actually get more relations than just the ones we initially wanted to enforce. 

The main result of \cite{LS2} (Theorem 5.12) is the following:
\begin{thm}[Algebraic Spivak Theorem]\label{AlgebraicSpivak}
For all quasi-projective derived schemes $X$ (over a field of characteristic 0) there is a natural isomorphism
\begin{equation*}
d\Omega_*(X) \cong \Omega_*(\tau_0 X),
\end{equation*}
where the right hand side is the algebraic cobordism, in the sense of Levine-Morel, of the truncation $\tau_0 X$ of $X$.
\end{thm}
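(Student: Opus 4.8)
The plan is to construct a natural morphism
\[
\Theta_X \colon \Omega_*(\tau_0 X) \longrightarrow d\Omega_*(X)
\]
and to prove it is bijective. A generating Levine--Morel cobordism cycle $[V \to \tau_0 X;\, \mathscr L_1,\dots,\mathscr L_r]$ has $V$ smooth over $k$, hence quasi-smooth as a derived scheme, and $V \to \tau_0 X$ proper; composing with the closed embedding $i \colon \tau_0 X \hookrightarrow X$ and absorbing the line bundles via the first Chern class operations of $d\Omega_*$ sends it to $c_1(\mathscr L_1) \bullet \cdots \bullet c_1(\mathscr L_r) \cap [V \to X]$. The cleanest way to see that this respects the defining relations of $\Omega_*$ is to check that $d\Omega_*$, restricted to classical schemes, is an oriented Borel--Moore homology theory in the sense of \cite{LM}: then $\Theta_X$ is just the canonical transformation $\vartheta$ out of the universal such theory, followed by the proper pushforward $i_*$, so that $\Theta_X = i_* \circ \vartheta_{\tau_0 X}$ with $\vartheta_Y \colon \Omega_*(Y) \to d\Omega_*(Y)$ the universal transformation on a classical $Y$. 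It therefore suffices to prove \emph{(I)} that $\vartheta_Y$ is an isomorphism for every classical quasi-projective $Y$, and \emph{(II)} that $i_* \colon d\Omega_*(\tau_0 X) \to d\Omega_*(X)$ is an isomorphism for every quasi-projective derived $X$; the theorem then follows because $\Theta_X$ becomes a composite of two isomorphisms.

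\textbf{The structure lemma.} Both (I) and (II) hinge on the following normal-form statement: for a quasi-smooth $W$ the fundamental class $[\mathrm{id}_W] \in d\Omega_*(W)$ equals, modulo the relations, a $\Laz$-linear combination of classes $c_1(\mathscr L_\bullet) \cap [W' \to W]$ with $W'$ \emph{smooth} over $k$. I would prove this by induction on $\dim W$. A quasi-smooth closed embedding of $W$ into a smooth $k$-scheme $M$ presents $W$ as the derived vanishing locus of a section $s$ of a vector bundle which, after twisting by a power of $\OO(1)$, may be taken globally generated (Proposition \fref{GlobalGeneration}); Bertini in characteristic $0$ produces a second section of that bundle whose vanishing locus is classical, smooth, and of the expected codimension, and the derived vanishing locus of the straight-line homotopy between the two sections is a quasi-smooth family over $\Proj^1$ joining $W$ to that smooth scheme. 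The homotopy fibre relation identifies the two modulo the fibre at infinity; blowing up $M$ and this family by weak factorization so that the classical vanishing loci occurring become normal crossings, expanding the fibre at infinity by the formal group law, and then applying the SNC relations --- both to transfer the resulting identity from the smooth ambient $M$ down to $W$ and to decompose the normal-crossings corrections into their smooth strata --- leaves only classes supported in dimension $<\dim W$, which the inductive hypothesis absorbs.

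\textbf{Deducing (I) and (II).} Pushing the structure lemma forward along an arbitrary proper $W \to Z$ rewrites every generator $[W \to Z]$ of $d\Omega_*(Z)$ as a $\Laz$-combination of classes $c_1(\mathscr L_\bullet) \cap [W' \to Z]$ with $W'$ smooth over $k$ and proper over $Z$; this is precisely a $\Laz$-combination of images of Levine--Morel cycles $[W' \to Z;\, \mathscr L_\bullet]$, so it gives at once the surjectivity of $\vartheta_Z$, and also the surjectivity of $i_*$ because a smooth --- hence classical --- $W'$ maps to $Z$ uniquely through $\tau_0 Z$ (the inclusion of classical into derived schemes being left adjoint to $\tau_0$). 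For injectivity in (I) one checks that $d\Omega_*$ restricted to classical schemes is itself the universal oriented Borel--Moore homology theory on $\mathrm{Sch}_k$: the transformation to any such theory $A_*$ sends $[W \to Z]$ to the image in $A_*(Z)$ of the fundamental class of the quasi-smooth $W$ in $A_*$, defined from a presentation of $W$ as a derived complete intersection via the refined Euler class of the presenting bundle, and this is well defined precisely because the homotopy fibre relation, the formal group law, and the SNC relations become identities in $A_*$ (the last two being, respectively, an axiom and a consequence of the section and double-point relations of \cite{LM}, \cite{Lev2}); the universality of both $\Omega_*$ and $d\Omega_*$ on classical schemes then forces $\vartheta$ to be an isomorphism. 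Finally, injectivity in (II) follows by building a two-sided inverse $d\Omega_*(X) \to d\Omega_*(\tau_0 X)$ to $i_*$: on a generator $[W \to X]$ it resolves $(\tau_0 W)_{\mathrm{red}}$ to a smooth proper $\widetilde W \to \tau_0 X$ and records, via the isomorphism $\Omega_*(\tau_0 X) \cong d\Omega_*(\tau_0 X)$ from (I), the class of $\widetilde W$ together with the formal-group-law corrections dictated by the structure lemma; that this is well defined and inverts $i_*$ on both sides are again instances of the SNC and formal group law relations.

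\textbf{The main obstacle.} The genuinely hard part is the structure lemma above --- the assertion that the SNC relations, fed by the blow-ups of weak factorization and steered by the formal group law, suffice to reduce an arbitrary quasi-smooth scheme to its smooth strata with all correction terms correctly accounted for. This is the exact analogue of Levine and Morel's analysis of their own cobordism, and it is where resolution of singularities, weak factorization and Bertini --- and hence the characteristic $0$ hypothesis --- are indispensable, even though the construction of $d\Omega_*$ and of $\Theta_X$ themselves are characteristic-free.
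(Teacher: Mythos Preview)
The paper does not prove this theorem. It is stated immediately after the sentence ``The main result of \cite{LS2} (Theorem 5.12) is the following:'' and is simply quoted from Lowrey--Sch\"urg without argument; the paper then \emph{uses} it (for instance to verify axioms $(PB)$, $(EH)$, $(CD)$ for $d\Omega_*$, and throughout Sections~\ref{MainConstructionSect}--\ref{BivariantChowSect}). So there is no ``paper's own proof'' to compare your proposal against.

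What you have written is a plausible high-level sketch of the strategy actually carried out in \cite{LS2}: factor the comparison through $d\Omega_*(\tau_0X)$, reduce to (I) the classical comparison and (II) invariance under truncation, and base both on a ``structure lemma'' that rewrites the fundamental class of a quasi-smooth scheme in terms of smooth cycles via a Bertini deformation and the SNC/formal-group-law relations. That is indeed the shape of the Lowrey--Sch\"urg argument, and your identification of the structure lemma as the place where characteristic $0$ (resolution, weak factorization, Bertini) enters is correct.

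One caution about your outline: in arguing injectivity of (I) you want to show that $d\Omega_*$ restricted to classical schemes is itself the universal oriented Borel--Moore homology theory. Be careful not to import the axioms $(PB)$, $(EH)$, $(CD)$ for $d\Omega_*$ from the present paper --- here those are \emph{deduced from} Theorem~\ref{AlgebraicSpivak}, so invoking them would be circular. In \cite{LS2} the comparison is not run through the full Borel--Moore axiom list; rather one constructs the transformation to an arbitrary target $A_*$ directly (via virtual fundamental classes/refined Euler classes, as you indicate) and checks well-definedness relation by relation. Your sketch gestures at this, but the actual verification --- in particular that the SNC relation holds in $A_*$ and that the ``structure lemma'' corrections are compatible with all three families of relations simultaneously --- is the bulk of the work in \cite{LS2} and cannot be dispatched in a sentence.
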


Although not explicitly stated in \cite{LS2}, $d\Omega_*$ is also the universal \emph{oriented Borel-Moore homology} on quasi-projective derived schemes. As a slight modification of the concept introduced in \cite{LM} Chapter 5, these are functors $\B_*$ additive under disjoint unions and covariant for proper morphisms (proper pushforwards) that moreover come with Gysin pullbacks $f^!$ for all quasi-smooth morphisms $f$ of pure relative virtual dimension. There should also be a bilinear, associative and commutative exterior product
\begin{equation*}
\times: \B_*(X) \times \B_*(Y) \to \B_*(X \times Y),
\end{equation*}
and this product is required to have a unit element $1 \in \B_*(pt)$. For a line bundle $\Li$ on $X$ we may define a \emph{first Chern class operator} $c_1(\Li) \cap$ by $s^! s_*$, where $s$ is the zero section $X \to \Li$. These structures are required to satisfy the following properties
\begin{enumerate}
\item[($BM_1$)] Gysin pullbacks are functorial.

\item[($BM_2$)] For all homotopy Cartesian squares (as opposed for all transverse Cartesian squares)
\begin{center}
\begin{tikzpicture}[scale=2]
\node (A2) at (1,2) {$Y'$};
\node (B2) at (1,1) {$Y$};
\node (A1) at (0,2) {$X'$};
\node (B1) at (0,1) {$X$};
\path[every node/.style={font=\sffamily\small}]
(A1) edge[->] node[above]{$g'$} (A2)
(B1) edge[->] node[above]{$g$} (B2)
(A1) edge[->] node[left]{$f'$} (B1)
(A2) edge[->] node[right]{$f$} (B2)
;
\end{tikzpicture}
\end{center}
where $f$ is quasi-smooth and $g$ is proper, we have that $g'_*f'^! = f^! g_*$.

\item[($BM_3$)] The exterior product is compatible with pullbacks and pushforwards:
\begin{equation*}
(f \times g)_* (\alpha \times \beta) = f_* \alpha \times g_* \beta
\end{equation*}
and
\begin{equation*}
(f \times g)^! (\alpha \times \beta) = f^! \alpha \times g^! \beta.
\end{equation*}
whenever the operations are defined.

\item[($PB$)] Let $\pi: P \to X$ be a projective bundle of rank $r$, and denote by $\xi$ the operator $c_1(\OO_{P }(1)) \cap$. Now the map
\begin{equation*}
\bigoplus_{i=0}^{r} \B_*(X) \stackrel{\xi^i \pi^*}{\longrightarrow} \B_*(P)
\end{equation*}
is an isomorphism.
\item[($EH$)] Suppose $V \to X$ is a vector bundle and that $p: E \to X$ is a $V$-torsor. Now the map 
\begin{equation*}
p^!: \B_*(X) \to \B_*(E)
\end{equation*}
is an isomorphism.

\item[($CD$)] Suppose $W = (\Proj^N)^r$, $n_1,...,n_r$ non-negative integers, and let $p_i = W \to \Proj^N$ the $i^{th}$ projection. Let $p_1^*(X_N)^{n_1} \cdots p_r^*(X_N)^{n_r}$ be a global section of the line bundle $\OO(n_1) \boxtimes \cdots \boxtimes \OO(N_r)$, and let $E$ be its vanishing locus. Now the proper pushforward $\B_*(E) \to \B_*(W)$ is an injection. 
\end{enumerate} 
The exterior product $\times$ induces a commutative ring structure on $\B_*(pt)$ as well as $\B_*(pt)$-module structures on all other homology groups. All the operations are naturally $\B_*(pt)$-linear. For any quasi-smooth $X$ of pure virtual dimension, we may define the \emph{fundamental class} $1_X \in \B_*(X)$ as the Gysin pullback $\pi_X^!(1)$ of the identity element of $\B_*(pt)$, where $\pi_X: X \to pt$ is the quasi-smooth structure morphism. As one would expect, the projective bundle formula yields us a formal group law $F \in B_*(pt)[[x,y]]$ expressing the first Chern class of $c_1(\Li \otimes \Li') \cap$ in terms of $c_1(\Li)\cap$ and $c_1(\Li')\cap$. 

The main goal of this subsection is to provide the following universal property for $d\Omega_*$.

\begin{thm}\label{UniversalOBM}
Derived algebraic bordism $d\Omega_*$ is the universal oriented Borel-Moore homology theory in the following sense: given another such theory $\B_*$, there is a unique transformation $d\Omega_* \to \B_*$ commuting with pushforwards, pullbacks and exterior products, and sending $1 \in d\Omega_*(pt)$ to $1 \in \B_*(pt)$. Moreover, the map $\Laz = d\Omega_*(pt) \to \B_*(pt)$ classifies the formal group law of $\B_*$.
\end{thm}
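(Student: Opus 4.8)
The plan is to produce the transformation by the only formula the axioms permit, and then to verify that this formula is well defined on the presentation of $d\Omega_*$ as a quotient of cobordism cycles. First I would record uniqueness: if $\Theta\colon d\Omega_*\to\B_*$ commutes with proper pushforward, quasi-smooth Gysin pullback and exterior products and sends $1$ to $1$, then for a proper $\rho\colon V\to X$ with $V$ quasi-smooth and structure morphism $\pi_V\colon V\to pt$ one has $[V\to X]=\rho_*(1_V)=\rho_*\pi_V^!(1)$ in $d\Omega_*(X)$, so necessarily $\Theta([V\to X])=\rho_*\pi_V^!(1)$ with $1\in\B_*(pt)$; and since $\Laz=d\Omega_*(pt)$ acts on every $d\Omega_*(X)$ through exterior products, this determines $\Theta$ completely. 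This simultaneously yields the candidate formula for existence.

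For existence I would take this formula as a definition on cobordism cycles and extend it $\Laz$-linearly after tensoring the cycle groups with $\Laz$, using the ring homomorphism $\phi_\B\colon\Laz\to\B_*(pt)$ classifying the formal group law $F_\B$ of $\B_*$ (available from the projective bundle formula $(PB)$). That this assignment commutes with the three operations on the level of cycles is a routine consequence of the functoriality axioms $(BM_1)$--$(BM_3)$; the real content is that it annihilates the defining relations of $d\Omega_*$. Additivity under disjoint union is immediate. For the homotopy fibre relation, I would observe that its two sides are $i_0^!$ and $i_\infty^!$ applied to the single cycle $[W\to X\times\Proj^1]$, with $i_0,i_\infty\colon X\to X\times\Proj^1$ the two sections; by $(PB)$ the group $\B_*(X\times\Proj^1)$ splits as $\pi^!\B_*(X)\oplus\bigl(c_1(\OO(1))\cap\pi^!\B_*(X)\bigr)$ for the projection $\pi$, and since $\pi\circ i_0=\pi\circ i_\infty=\mathrm{id}$ while $i_0^*\OO(1)$ and $i_\infty^*\OO(1)$ are trivial — so their first Chern class operators vanish, using the extended homotopy invariance $(EH)$ — both maps act as the identity on the first summand and as zero on the second, hence coincide. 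For the formal group law relations, the point is that the assignment intertwines the first Chern class operators of the two theories (since $c_1(L)\cap\alpha=s^!s_*\alpha$ for the zero section $s$, a quasi-smooth embedding), and that $\phi_\B$ carries the universal law $F$ — which is the law of $d\Omega_*$ by construction — to $F_\B$.

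The hard part, exactly as in \cite{LM} and \cite{LS2}, will be the SNC relations: one must show that for a simple normal crossing divisor $E\subset W$ on a quasi-smooth $W$ the fundamental class $1_E$ agrees in $\B_*(E)$ with the formal-group-law lift $\eta_E$ of $[E\to W]$ to the support of $E$. This equality is not among the axioms, and I expect to obtain it by the reduction of \emph{loc.\ cit.}: using $(PB)$ and $(EH)$ one reduces to the case where $W$ is a product of projective spaces and $E$ is the vanishing locus of a section of a very ample line bundle, and then the axiom $(CD)$ — whose content is precisely the injectivity of $\B_*(E)\to\B_*(W)$ in that situation — reduces the claim to the already-verified formal group law relation after pushforward to $W$. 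This reduction is the one genuinely technical ingredient; everything else is formal.

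Finally, the ``moreover'' clause is built in: since the $\Laz$-action used to define the transformation is the one induced by $\phi_\B$, the ring map $d\Omega_*(pt)=\Laz\to\B_*(pt)$ it induces is $\phi_\B$ itself, which classifies $F_\B$ by construction.
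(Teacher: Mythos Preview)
Your proposal is correct and follows essentially the same approach as the paper: define the transformation by the forced formula $[V\to X]\mapsto \rho_*\pi_V^!(1)$ on cycles, then verify compatibility with the homotopy fibre, formal group law, and SNC relations, deferring the last to the arguments of \cite{LM} Chapter~7. The only notable difference is in the homotopy fibre step: you invoke the full projective bundle formula $(PB)$ on $\B_*(X\times\Proj^1)$ to show $i_0^!=i_\infty^!$, whereas the paper argues more directly in $\B_*(W)$ by observing via $(BM_2)$ that both $i_{0*}(1_{W_0})$ and $i_{\infty*}(1_{W_\infty})$ equal $c_1(\Li)\cap 1_W$ for $\Li$ the pullback of $\OO(1)$ --- a slightly more elementary route that avoids the full strength of $(PB)$.
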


It is easy to see that $d\Omega_*$ is indeed a Borel-Moore homology in the above sense. The properties $(BM_1) - (BM_3)$ follow directly from the definition, and the latter three properties follow from the algebraic Spivak's theorem \fref{AlgebraicSpivak} stating that $d\Omega_*(X) = \Omega_*(\tau_0 X)$ and from the corresponding claims for the underived bordism. 

\begin{proof}[Proof of Theorem \fref{UniversalOBM}]
As in \cite{LS2}, denote by $\mathcal{M}^+_*$ the theory that associates to each $X$ the free Abelian group on the isomorphism classes of proper maps $V \to X$ from (connected) quasi-smooth $V$. The pushforward is given by composition, pullback by homotopy fibre product and the exterior product by Cartesian product. If $\B_*$ is an oriented Borel-Moore homology theory, then we may define map $\psi: \mathcal{M}^+_* \to \B_*$ by sending $[f: V \to X]$ to $f_*\pi_V^!(1)$. It follows from the axioms $(BM_1) - (BM_3)$ that $\psi$ commutes with all the three operations. The proof is a straightforward exercise, and is left for the reader. We are left with the task of showing that the map $\mathcal{M}^+_* \to \B_*$ descends to a map $d\Omega_* \to \B_*$.

First we deal with the homotopy fibre relation. Suppose we have a proper map $W \to \Proj^1 \times X$, where $W$ is quasi-smooth, and let $W_0$ and $W_\infty$ be the two homotopy fibres mapping into $W$ by maps $i_0$ and $i_\infty$. Let $\Li$ be the pullback of $\OO(1)$ onto $W$. From $(BM_2)$ it follows that both $i_{0*} (1_{W_0})$ and $i_{\infty *}(1_{W_\infty})$ coincide with $c_1(\Li) \cap (1_W)$ in $\B_*$, so especially they must agree. Hence also the pushforwards of the fundamental classes of $W_0$ and $W_\infty$ to $\B_*(X)$ agree so the homotopy fibre relation holds on fundamental classes in $\B_*$, giving us a morphism $d\Omega_*^{\mathrm{naive}} \to \B_*$ (in the notation of \cite{LS2}).

In imposing the formal group law axiom we first tensor $d\Omega_*^{\mathrm{naive}}$ with the Lazard ring $\Laz$, and then artificially impose the universal formal group law on $\Laz \otimes_\Z d\Omega_*^{\mathrm{naive}}$. As the formal group law of $\B_*$ is classified by a map $\Laz \to \B_*(pt)$, we get a map  $\Laz \otimes_\Z d\Omega_*^{\mathrm{naive}} \to \B_*$ descending to a map $d \Omega_*^{\mathrm{pre}} \to \B_*$. As the SNC-axiom only mentions classical schemes, it holds in $\B_*$ by the arguments of \cite{LM} Chapter 7, and we get a map $d \Omega_* \to \B_*$.

The uniqueness of the map follows from the fact that we require the identity of the homology ring of the point to be preserved. The second claim follows from the fact that the map preserves first Chern classes.
\end{proof}

\subsection{Construction of the bivariant algebraic cobordism}\label{MainConstructionSect}
 
The functoriality of the derived algebraic bordism $d \Omega^*$ suggests us that the bivariant theory we are looking for should have orientations along all quasi-smooth morphisms, and pushforwards along all proper morphisms. Moreover, as quasi-smooth morphisms are stable under homotopy pullbacks, the construction of universal bivariant theory of Yokura, introduced earlier in Section \fref{BivariantTheories} should make sense. There is one problem however: derived schemes form an $\infty$-category, not an ordinary category. To circumvent the problem, we are going to work with the \emph{homotopy category} of quasi-projective derived schemes. This is the ordinary category whose objects are quasi-projective derived schemes, and so that the set of morphisms $X \to Y$ is the set of path components (also called \emph{equivalence classes} or \emph{homotopy classes}) of the mapping space $\map(X, Y)$ of derived schemes. Our confined morphisms will be proper morphisms, our independent squares will be homotopy Cartesian squares, and our specialized morphisms will be quasi-smooth morphisms. Note that as homotopy fibre products do not usually coincide with fibre products, Yokura's construction does not strictly speaking apply. However, extending the construction to this situation is a formality.

Let us denote by $\M^*$ Yokura's bivariant theory applied in the above situation. We recall that $\M^d(X \to Y)$ is the free Abelian group on equivalence classes of proper morphisms $[V \to X]$ so that the composition $V \to Y$ is quasi-smooth of relative virtual dimension $-d$, modulo the relation that disjoint union corresponds to summation.

\begin{rem}
By our earlier discussion in \fref{BivariantTheories}, any bivariant theory of this type has strong orientations along smooth morphisms. As a special case, Poincaré duality (in the sense of Proposition \fref{PoincareDuality}) is known to hold for smooth schemes.
\end{rem}

Before constructing the bivariant algebraic cobordism, we make the following useful definitions:

\begin{defn}\label{BivariantIdeal}
Suppose $\B^*$ is a bivariant theory on a category $\mathcal{C}$. A \emph{bivariant subset} $S$ consists of subsets $S(X \to Y) \subset \B^*(X \to Y)$ for all morphisms in $\mathcal{C}$. A bivariant subset $\I$ is a \emph{bivariant ideal} if
\begin{enumerate}
\item the subsets $\I(X \to Y) \subset \B^*(X \to Y)$ are subgroups;
\item the subsets $\I(X \to Y)$ are closed under bivariant pushforwards and pullbacks;
\item the bivariant product restricts to morphisms
$$\bullet: \B^*(X \to Y) \times \I(Y \to Z) \to \I(X \to Z)$$
and 
$$\bullet: \I(V \to X) \times \B^*(X \to Y) \to \I(V \to Y).$$
\end{enumerate}
Clearly, given a bivariant ideal $\I \subset \B^*$, the groups 
$$(\B^*/\I)(X \to Y) := \B^*(X \to Y)/\I(X \to Y)$$ 
inherit the structure of a bivariant theory.

Given a bivariant subset $S \subset \B^*$, we call the smallest bivariant ideal of $\B^*$ containing $S$ the bivariant ideal \emph{generated by $S$}, and denote it by $\langle S \rangle_{\B^*}$.
\end{defn}

It is now easy to make the main construction.

\begin{cons}[Bivariant Algebraic Cobordism]\label{MainConstruction}
We begin by observing that the induced homology groups $\M_*(X) := \M^{-*}(X \to pt)$ coincide with the cobordism cycle groups $\mathcal{M}_*^+(X)$ of \cite{LS2}. The construction of \emph{loc. cit.} also induces a natural surjection $\Theta: \Laz \otimes \mathcal{M}_*^+(X) \to d\Omega_*(X)$. Let us denote by $\M^*_\Laz$ the bivariant theory $\Laz \otimes \M^*$, and by $LS$ the bivariant subset of $\M^*_\Laz$ so that $LS(X \to pt)$ is naturally identified with the kernel of $\Theta$, and $LS(X \to Y)$ is empty whenever $Y$ is not the point $pt$. We now define the \emph{bivariant algebraic cobordism $\Omega^*$} as the quotient bivariant theory
$$\Omega^* := \M^*_\Laz / \langle LS \rangle_{\M^*_\Laz}.$$
We will call the associated cohomology ring $\Omega^*(X)$ the \emph{algebraic cobordism of $X$}.
\end{cons}

The reader who thinks that the above definition is unreasonable, should be comforted by the following proposition showing that the bivariant version of the homotopy fibre relation of Lowrey-Schürg is implied by the above relations.

\begin{prop}\label{BivariantHomotopyFibre}
Let $X \to Y$ be a morphism of quasi-projective derived schemes, and suppose $W \to \Proj^1 \times X$ is a proper morphism so that the composition $W \to \Proj^1 \times Y$ is quasi-smooth of relative dimension $d$. Let us denote by $W_0$ and $W_\infty$ the homotopy fibres of $W \to \Proj^1 \times X$ lying over $0 \times X$ and $\infty \times X$ respectively. We now have
$$[W_0 \to X] = [W_\infty \to X] \in \Omega^{-d}(X \to Y).$$
\end{prop}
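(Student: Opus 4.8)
The plan is to reduce the bivariant homotopy fibre relation to the homology-level statement, which we may then extract from the relations already built into $\Omega^*$. First I would observe that the cycle $[W \to \Proj^1 \times X]$, together with the quasi-smooth composite $W \to \Proj^1 \times Y$, determines a bivariant class $\alpha := [W \to \Proj^1 \times X] \in \Omega^{-d}(\Proj^1 \times X \to \Proj^1 \times Y)$ (here I use that quasi-smoothness of the composite is exactly the condition appearing in the definition of $\M^*$, hence of $\Omega^*$). The inclusions $i_0, i_\infty : X \to \Proj^1 \times X$ are sections of the projection, hence closed immersions between the relevant derived schemes; more importantly they fit into homotopy Cartesian squares obtained by pulling back $\Proj^1 \times Y \to \Proj^1$ along $\{0\} \hookrightarrow \Proj^1$ and $\{\infty\} \hookrightarrow \Proj^1$. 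Since $\{0\} \to \Proj^1$ and $\{\infty\} \to \Proj^1$ are quasi-smooth of the same relative dimension ($-1$), these pullbacks carry orientations $\theta(i_0)$, $\theta(i_\infty) \in \Omega^{1}(X \to \Proj^1 \times X)$ in the bivariant theory.

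The next step is to write $[W_0 \to X]$ and $[W_\infty \to X]$ as bivariant products. Using that $W_0 = W \times^h_{\Proj^1 \times X}(\{0\} \times X)$ and that orientations of quasi-smooth morphisms are computed cycle-theoretically by homotopy pullback (the defining formula for the bivariant product in Yokura's construction, as recalled in Section~\ref{BivariantTheories}), one gets
$$\theta(i_0) \bullet \alpha = [W_0 \to X] \quad\text{and}\quad \theta(i_\infty) \bullet \alpha = [W_\infty \to X]$$
in $\Omega^{-d}(X \to Y)$, where I push forward along the proper map $W_0 \to X$ (resp. $W_\infty \to X$) as part of forming the product. Therefore it suffices to prove the single relation $\theta(i_0) = \theta(i_\infty)$ — not as an identity in $\M^*_\Laz$, where it is false, but in the quotient $\Omega^*$ — or rather, it suffices to prove the weaker statement $\theta(i_0)\bullet\alpha = \theta(i_\infty)\bullet\alpha$ for the particular $\alpha$ at hand, which we will get from the homology-level relation.

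Here is where the relations defining $\Omega^*$ enter. Pushing forward $[W_0 \to X]$ and $[W_\infty \to X]$ along $X \to pt$ lands us in $\Omega^{-d}(X \to pt) = d\Omega_{d}(X)$, and there the equality holds: it is precisely the image under $\Theta$ of the homotopy fibre relation of Lowrey–Schürg, which is part of $LS$ and hence is killed in $\Omega^*$. But the bivariant statement is stronger than its pushforward to a point, so I cannot simply invoke this. The clean way around this is the standard trick used by Levine–Morel and adapted in \cite{LS2}: the classes $[W_0 \to X]$ and $[W_\infty \to X]$ in $\Omega^{-d}(X \to Y)$ are obtained by pulling back a \emph{universal} situation. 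Concretely, the difference $[W_0 \to X] - [W_\infty \to X]$ is the pullback, along the map $X \to \Proj^1 \times X \to$ (classifying space of the data), of a fixed element living over $pt$ — or, more elementarily, one applies the bivariant class $\alpha \in \Omega^{-d}(\Proj^1 \times X \to \Proj^1 \times Y)$ to the difference of orientations $\theta(i_0) - \theta(i_\infty) \in \Omega^1(X \to \Proj^1\times X)$, and one checks that $\theta(i_0) - \theta(i_\infty)$ already lies in the ideal $\langle LS\rangle$. To see the latter, note $\theta(i_0) - \theta(i_\infty)$ is $c_1(\OO(1))$-type class: it equals $p^*(\theta(0\hookrightarrow\Proj^1) - \theta(\infty\hookrightarrow\Proj^1))$ where $p: \Proj^1\times X \to \Proj^1$; and $\theta(0\hookrightarrow\Proj^1) - \theta(\infty\hookrightarrow\Proj^1) \in \Omega^1(pt \to \Proj^1)$ pushes forward to $0$ in $\Omega^1(\Proj^1\to\Proj^1)$ (both are the first Chern class of $\OO(1)$ evaluated at a point, i.e. the class of a hyperplane, and linear equivalence of $0$ and $\infty$ on $\Proj^1$ is the archetype of the homotopy fibre relation), hence lies in $LS$ by definition; being in the ideal is then automatic since $\langle LS\rangle$ is closed under pullback and product.

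The main obstacle, and the step requiring the most care, is the passage from the homology relation (equality after pushforward to $pt$, which is literally the content of $LS$) to the needed ideal membership $\theta(i_0) - \theta(i_\infty) \in \langle LS\rangle_{\M^*_\Laz}$. One must verify that the difference of orientations is genuinely pulled back from the point — i.e. that the Chern-class identity $c_1^{\Proj^1}(\OO(1))|_0 = c_1^{\Proj^1}(\OO(1))|_\infty$ is visible already in $\Omega^*(pt \to \Proj^1)$ and not merely after further pushforward — and then transport this along $p^*$ and $\bullet\,\alpha$ using that $\langle LS\rangle$ is a bivariant ideal (closed under pullback and under bivariant multiplication on both sides, by Definition~\ref{BivariantIdeal}). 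Once that membership is in hand, the proposition follows: $[W_0 \to X] - [W_\infty \to X] = (\theta(i_0) - \theta(i_\infty))\bullet\alpha \in \langle LS\rangle(X \to Y)$, which is zero in $\Omega^*$.
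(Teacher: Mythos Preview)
Your proposal has a genuine gap in the central step. You want to form the difference $\theta(i_0) - \theta(i_\infty)$ and argue it lies in $\langle LS\rangle$, but these two orientations live in \emph{different} bivariant groups: $\theta(i_0)\in\Omega^1(X\xrightarrow{i_0}\Proj^1\times X)$ and $\theta(i_\infty)\in\Omega^1(X\xrightarrow{i_\infty}\Proj^1\times X)$. The subtraction is therefore not defined. The same objection applies to $\theta(0\hookrightarrow\Proj^1)-\theta(\infty\hookrightarrow\Proj^1)$: the two maps $pt\to\Proj^1$ are different, so these classes sit in two different copies of ``$\Omega^1(pt\to\Proj^1)$''. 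Your subsequent claim that the difference ``lies in $LS$ by definition'' is also off: by construction $LS(X\to Y)$ is empty unless $Y=pt$, so no element of a group with target $\Proj^1$ can be in $LS$ itself. (There is also a minor target mismatch: with your $\alpha\in\Omega^{-d}(\Proj^1\times X\to\Proj^1\times Y)$ the product $\theta(i_0)\bullet\alpha$ lands in $\Omega^{1-d}(X\to\Proj^1\times Y)$, not in $\Omega^{-d}(X\to Y)$.)

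The paper resolves exactly this difficulty, and the missing ingredient is Poincar\'e duality (Proposition~\ref{StrongOrientations}). Instead of comparing orientations, one compares their \emph{pushforwards} $[0\to\Proj^1]$ and $[\infty\to\Proj^1]$ in the cohomology ring $\Omega^1(\Proj^1\to\Proj^1)$, where they genuinely live in the same group. Since $\Proj^1$ is smooth, right multiplication by $\theta(\Proj^1\to pt)$ gives an isomorphism $\Omega^1(\Proj^1\to\Proj^1)\xrightarrow{\sim}\Omega^0(\Proj^1\to pt)=d\Omega_0(\Proj^1)$, and in the latter group the two classes agree by the archetypal homotopy fibre relation (this is built into $LS$). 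Hence $[0\to\Proj^1]=[\infty\to\Proj^1]\in\Omega^1(\Proj^1)$. One then pulls back along $W\to\Proj^1$ to obtain $[W_0\to W]=[W_\infty\to W]\in\Omega^1(W)$, multiplies on the right by $\theta(W\to Y)$ (quasi-smooth of relative dimension $d+1$), and pushes forward along the proper map $W\to X$. Your overall strategy---reduce to a universal relation over $\Proj^1$ and transport it---is the right one, but the transport has to go through cohomology of $\Proj^1$ via strong orientations, not through a subtraction of orientations that does not type-check.
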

\begin{proof}
By the construction of $\Omega^*$, we know this when $Y$ is the point $pt$. Applying the Poincaré duality \fref{StrongOrientations}, we can conclude that $[0 \to \Proj^1] = [\infty \to \Proj^1] \in \Omega^1(\Proj^1).$ Pulling this back along the composition $W \to \Proj^1 \times X \to \Proj^1$, we see that 
$$[W_0 \to W] = [W_\infty \to W] \in \Omega^1(W).$$ 
Moreover, as the morphism $f := W \to \Proj^1 \times Y \to Y$ is quasi-smooth of relative virtual dimension $d+1$, we can multiply the elements of the previous equation from the right by the orientation $\theta(f)$ in order to obtain
$$[W_0 \to W] = [W_\infty \to W] \in \Omega^{-d}(W \to Y).$$
Finally, as $W \to Y$ factors through the proper morphism $g: W \to X$ defined as the composition $W \to \Proj^1 \times X \to X$, we can apply the bivariant pushforward $g_*$ to the above equation to obtain the desired relation
$$[W_0 \to X] = [W_\infty \to X] \in \Omega^{-d}(X \to Y),$$
concluding the proof.
\end{proof}

Our next goal is to show that the associated homology groups $\Omega_*(X) := \Omega^{-*}(X \to pt)$ are naturally isomorphic to the derive algebraic bordism groups $d\Omega_*$ of Lowrey-Schürg (and hence to the algebraic bordism groups $\Omega_*(\tau_0 X)$ of Levine-Morel, so that there is no ambiguity in the notation for $X$ classical). In order to show this, we will have to study in more detail the bivariant ideal $\langle LS \rangle_{\M^*_\Laz} \subset \M^*_\Laz$ appearing in the Construction \fref{MainConstruction}. 

\begin{lem}
The elements of $\langle LS \rangle_{\M^*_\Laz}$ are linear combinations of elements of form 
\begin{equation*}
g_* (\alpha \bullet f^* (r) \bullet \beta)
\end{equation*}
where $r \in LS(W \to pt)$, $f$ is an arbitrary map to $pt$, $\alpha$ and $\beta$ are arbitrary bivariant elements, and $g$ is an arbitrary proper morphism (with the obvious restriction that the expression above makes sense).
\end{lem}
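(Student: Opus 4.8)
The plan is to show that the set $J$ of all linear combinations of elements of the form $g_*(\alpha \bullet f^*(r) \bullet \beta)$, with $r \in LS(W \to pt)$, is itself a bivariant ideal; since it visibly contains $LS$ (take $\alpha, \beta, g$ all identities) and is visibly contained in any bivariant ideal containing $LS$ (each defining relation of a bivariant ideal, applied to $LS$, produces only elements of this form), it must equal $\langle LS \rangle_{\M^*_\Laz}$. So the entire content is checking the three closure axioms of Definition \fref{BivariantIdeal} for $J$.

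First I would observe that $J(X \to Y)$ is a subgroup: it is by construction closed under addition, and closed under negation because $-g_*(\alpha \bullet f^*(r) \bullet \beta) = g_*((-\alpha) \bullet f^*(r) \bullet \beta)$, using bilinearity of the bivariant product. Next, closure under the bivariant product: given an element $g_*(\alpha \bullet f^*(r) \bullet \beta) \in J(X \to Y)$ and an arbitrary $\gamma \in \M^*_\Laz(Y \to Z)$, the push-pull formula $(A_{12})$ lets us pull $g_*$ outside, giving $g_*\big( (\alpha \bullet f^*(r) \bullet \beta) \bullet \gamma \big)$, and then associativity of $\bullet$ rewrites the inside as $\alpha \bullet f^*(r) \bullet (\beta \bullet \gamma)$, which is again of the required shape. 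Multiplication on the left by an arbitrary $\delta \in \M^*_\Laz(V \to X)$ is handled similarly: $\delta \bullet g_*(\alpha \bullet f^*(r) \bullet \beta)$ equals $(g')_*\big( (g^* \delta) \bullet \alpha \bullet f^*(r) \bullet \beta\big)$ after an application of $(A_{123})$ (or $(A_{12})$ in the degenerate square where only $g$ is being pushed forward), absorbing $g^*\delta \bullet \alpha$ into a single new left factor $\alpha'$.

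Closure under bivariant pushforward is immediate: applying $h_*$ to $g_*(\alpha \bullet f^*(r) \bullet \beta)$ gives $(h \circ g)_*(\alpha \bullet f^*(r) \bullet \beta)$ by functoriality of pushforward, and $h \circ g$ is again proper, so this lies in $J$. Closure under bivariant pullback is the one genuinely delicate point and is where I expect the main obstacle: given an independent (homotopy Cartesian) square with top map $u^* : \M^*_\Laz(X \to Y) \to \M^*_\Laz(X' \to Y')$, I need $u^* g_*(\alpha \bullet f^*(r) \bullet \beta) \in J(X' \to Y')$. The strategy is to first use $(A_{23})$ to commute $u^*$ past the proper pushforward $g_*$, replacing it by a pushforward along the base-changed proper map $g'$ and a pullback along the induced map on the source; then use $(A_{13})$ repeatedly to distribute the pullback across the product $\alpha \bullet f^*(r) \bullet \beta$, which turns the middle factor into a pullback of $r$ along the composite $W' \to W \to pt$ of the base-changed map $f'$ with the structure map — and since $r$ was a pullback of an element of $LS(W \to pt)$ along a map to $pt$, the further pullback is again $f''^*(r)$ for the appropriate structure map $W' \to pt$, keeping us inside the prescribed form. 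The care here is purely bookkeeping about which squares are independent: one must check that all the squares arising from base-changing the chain $W \to X \to Y$ along $Y' \to Y$ are homotopy Cartesian, which follows from the pasting/cancellation properties of homotopy Cartesian squares, and that the base changes of proper morphisms remain proper. No new geometric input beyond the bivariant axioms $(A_{12})$, $(A_{13})$, $(A_{23})$, $(A_{123})$ and functoriality is needed; the proof is a formal manipulation, and the lemma should be stated and proved exactly at that level of generality.
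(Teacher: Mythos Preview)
Your proposal is correct and follows essentially the same route as the paper: show that the set $J$ of such linear combinations is itself a bivariant ideal containing $LS$, by verifying closure under pushforward (functoriality), right multiplication ($A_{12}$), left multiplication ($A_{123}$), and pullback ($A_{23}$ then $A_{13}$, with functoriality of pullback turning $h^*f^*(r)$ into $(f\circ h)^*(r)$). Two small terminological slips: $(A_{12})$ is not the push--pull formula, and your parenthetical ``or $(A_{12})$ in the degenerate square'' for left multiplication is not right---commuting a left factor past $g_*$ genuinely requires $(A_{123})$---but you apply the axioms correctly in substance.
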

\begin{proof}
Clearly any element of the above form is in $\langle LS \rangle_{\M^*_\Laz}$. We are done if we can show that such elements form a bivariant ideal in the sense of Definition \fref{BivariantIdeal}. Stability under pushforward is trivial, and by bivariant axiom $A_{12}$
\begin{align*}
g_* (\alpha \bullet f^* (r) \bullet \beta) \bullet \gamma = g_* (\alpha \bullet f^* (r) \bullet (\beta \bullet \gamma)),
\end{align*}
showing stability under right multiplication. Suppose $g$ is a map $X \to Y$ and let $h$ be an arbitrary map $Y' \to Y$. By forming the homotopy Cartesian square
\begin{center}
\begin{tikzpicture}[scale=2]
\node (A2) at (1,2) {$Y'$};
\node (B2) at (1,1) {$Y$};
\node (A1) at (0,2) {$X'$};
\node (B1) at (0,1) {$X$};
\path[every node/.style={font=\sffamily\small}]
(A1) edge[->] node[above]{$g'$} (A2)
(B1) edge[->] node[above]{$g$} (B2)
(A1) edge[->] node[right]{$h'$} (B1)
(A2) edge[->] node[right]{$h$} (B2)
;
\end{tikzpicture}
\end{center}
and using the bivariant axioms $A_{13}$ and $A_{23}$ we see that
\begin{align*}
h^* g_* (\alpha \bullet f^* (r) \bullet \beta) &= g'_* h'^* (\alpha \bullet f^* (r) \bullet \beta) \\
&= g'_* ( h'''^*(\alpha) \bullet h''^* f^* (r) \bullet h'^*(\beta)) \\
&= g'_* ( \alpha' \bullet f'^*(r) \bullet \beta').
\end{align*}
Finally, supposing $\gamma \in \Laz \otimes \M^* (Y' \to Y)$, it follows from the bivariant axiom $A_{123}$ that 
\begin{align*}
\gamma \bullet g_* (\alpha \bullet f^* (r) \bullet \beta) = g'_*((g^*(\gamma) \bullet \alpha) \bullet f^*(r) \bullet \beta)
\end{align*}
finishing the proof.
\end{proof}

The proof of the following proposition is now straightforward. 

\begin{prop}\label{InducedHomologyIsBordism}
There is a natural isomorphism
\begin{equation*}
\Omega^*(X \to pt) \cong d\Omega_{-*}(X).
\end{equation*}
\end{prop}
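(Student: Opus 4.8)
The plan is to rephrase both sides as quotients of one and the same group. By Construction \fref{MainConstruction} the left-hand group is $\Omega^{-*}(X\to pt) = \M^{-*}_\Laz(X\to pt)/\langle LS\rangle_{\M^*_\Laz}(X\to pt)$, and since $\M_*(X) = \M^{-*}(X\to pt) = \mathcal{M}^+_*(X)$ we have $\M^{-*}_\Laz(X\to pt) = \Laz\otimes\mathcal{M}^+_*(X)$. On the other hand, by the definition of $\Theta$ and $LS$, the right-hand group is $d\Omega_*(X) = (\Laz\otimes\mathcal{M}^+_*(X))/LS(X\to pt)$, where $LS(X\to pt) = \ker\Theta$. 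Since $LS$ is always contained in the bivariant ideal $\langle LS\rangle_{\M^*_\Laz}$ it generates, there is an induced surjection $d\Omega_*(X)\twoheadrightarrow\Omega^{-*}(X\to pt)$, and everything reduces to the reverse inclusion $\langle LS\rangle_{\M^*_\Laz}(X\to pt)\subseteq\ker\Theta$. The resulting identification, being induced by the identity of $\Laz\otimes\mathcal{M}^+_*(X)$, is automatically natural.

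For that inclusion I would invoke the preceding lemma, which writes every element of $\langle LS\rangle_{\M^*_\Laz}(X\to pt)$ as a $\Z$-linear combination of expressions $g_*(\alpha\bullet f^*(r)\bullet\beta)$ with $r\in LS(W\to pt)=\ker\Theta$, with $f\colon U\to pt$, $\beta\in\M^*_\Laz(U\to pt)$, $\alpha\in\M^*_\Laz(Z\to W\times U)$ and $g\colon Z\to X$ proper; it then suffices to show that $\Theta$ kills each such expression. The one nontrivial input is that the surjection $\Theta\colon\Laz\otimes\mathcal{M}^+_*\to d\Omega_*$ is $\Laz$-linear and intertwines proper pushforwards, Gysin pullbacks along quasi-smooth morphisms, and exterior products --- this is how $\Theta$ is constructed in \cite{LS2}, and is exactly the compatibility exploited in the proof of Theorem \fref{UniversalOBM}. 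From it I would extract two facts. \emph{(a)} If $\epsilon\in\M^*_\Laz(A\to pt)$ has $\Theta(\epsilon)=0$, then $\Theta(\alpha\bullet\epsilon)=0$ for every $\alpha\in\M^*_\Laz(B\to A)$: writing $\alpha$ as a $\Laz$-linear combination of classes $p_*\theta(h)$ with $p$ proper and $h$ quasi-smooth (every generator of $\M^*(B\to A)$ has this form), bivariant axiom $(A_{12})$ gives $\alpha\bullet\epsilon=\sum_i\lambda_i(p_i)_*(h_i^!\epsilon)$, and each summand is annihilated by $\Theta$ since $\Theta$ commutes with quasi-smooth Gysin pullback and with proper pushforward. \emph{(b)} The product $f^*(r)\bullet\beta\in\M^*_\Laz(W\times U\to pt)$ is the bivariant exterior product of $r$ and $\beta$, which on the homology theory $\M_*(-)=\M^{-*}(-\to pt)=\mathcal{M}^+_*(-)$ is the usual exterior product of cobordism cycles; hence $\Theta(f^*(r)\bullet\beta)=\Theta(r)\times\Theta(\beta)=0$ whenever $\Theta(r)=0$.

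Putting this together: grouping $g_*(\alpha\bullet f^*(r)\bullet\beta)$ as $g_*\bigl(\alpha\bullet(f^*(r)\bullet\beta)\bigr)$, fact (b) gives $\Theta(f^*(r)\bullet\beta)=0$, then fact (a) applied with $\epsilon=f^*(r)\bullet\beta$ gives $\Theta\bigl(\alpha\bullet(f^*(r)\bullet\beta)\bigr)=0$, and finally $\Theta g_*=g_*\Theta$ shows that the whole element lies in $\ker\Theta_X=LS(X\to pt)$. This proves $\langle LS\rangle_{\M^*_\Laz}(X\to pt)\subseteq\ker\Theta$, hence the equality $\langle LS\rangle_{\M^*_\Laz}(X\to pt)=LS(X\to pt)$, and therefore the proposition.

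The only step that is not formal bivariant bookkeeping is the compatibility of $\Theta$ with the three structural operations. The subtlety is that two of the defining families of relations of $d\Omega_*$ --- the formal group law relations and the SNC relations --- are themselves not stable under pushforward or pullback, so one cannot simply argue that ``the relations are preserved''; instead one needs that each of the successive quotient maps $\mathcal{M}^+_*\to d\Omega_*^{\mathrm{naive}}\to d\Omega_*^{\mathrm{pre}}\to d\Omega_*$ intertwines the operations, which is precisely what is verified in \cite{LS2} (and revisited in the proof of Theorem \fref{UniversalOBM}). Everything else --- decomposing a bivariant class into proper pushforwards of orientations, recognizing $f^*(-)\bullet(-)$ as an exterior product, and the $(A_{12})$--manipulation --- is routine.
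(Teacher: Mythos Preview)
Your argument is correct and follows the same route as the paper's proof: reduce to $\langle LS\rangle_{\M^*_\Laz}(X\to pt)=LS(X\to pt)$, invoke the preceding lemma to write generators as $g_*(\alpha\bullet f^*(r)\bullet\beta)$, then close $LS$ under left multiplication by decomposing $\alpha$ into proper pushforwards of orientations (using that $\Theta$ intertwines Gysin pullbacks and proper pushforwards), and finally recognize $f^*(r)\bullet\beta$ as the exterior product so that compatibility of $\Theta$ with $\times$ finishes. The only cosmetic difference is that the paper phrases each step as ``$LS$ is stable under \ldots'' while you phrase it as ``$\Theta$ kills \ldots'', which are equivalent since $LS=\ker\Theta$ by definition.
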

\begin{proof}
By construction it is immediate that the only problem we might have is that $\Omega_*(X \to pt)$ has \emph{too many} relations. Recall that $\Omega_*(X \to pt)$ was defined to be the quotient of $\Laz \otimes \M^*$ by the relations $\langle LS \rangle (X \to pt)$, whereas $d \Omega_* (X)$ is the quotient of $\Omega_*(X \to pt)$ by $LS(X \to pt)$. We are therefore reduced to showing that $\langle LS \rangle (X \to pt) = LS(X \to pt)$. 

By construction $LS$ is stable under pushforwards and Gysin pullbacks. The latter means that $LS$ is stable under left multiplication by orientations of quasi-smooth morphisms, but as $\Laz \otimes \M^*$ is generated over $\Laz$ by pushforwards of orientations, we see that actually $LS$ is stable under left multiplication by \emph{arbitrary} bivariant elements. Hence, using the previous lemma, we need to show that elements of form 
\begin{equation*}
f^*(r) \bullet \beta
\end{equation*}  
are in $LS$, where $f$ is a map $Y \to pt$, $r \in LS(X \to pt)$ and $\beta \in \Laz \otimes \M^*(Y \to pt)$. But this element is, by definition, the bivariant external product $r \times \beta$  (see the proof of \fref{PoincareDuality}). As this product coincides with the homological exterior product, which is well defined in $d\Omega_*$, $LS$ must be stable under external products by arbitrary elements, finishing the proof. 
\end{proof}

Next we give a convenient universal property for $\Omega^*$. Let us first make the following definition.

\begin{defn}\label{BorelMooreBivariant}
We say that a bivariant theory (with proper pushforwards and orientations along quasi-smooth morphisms) is \emph{Borel-Moore} if the orientation is stable under pullbacks and its homology is an oriented Borel-Moore homology in the sense previously defined.
\end{defn} 

Bivariant algebraic cobordism is clearly an example of such a theory, and as one would expect, it is the universal one:

\begin{thm}
Bivariant algebraic cobordsim $\Omega^*$ is the universal Borel-Moore bivariant theory in the following sense: given another such a theory $\B^*$ there exists a unique Grothendieck transformation $\omega: \Omega^* \to \B^*$ respecting orientations. The restriction of this map to corresponding homology theories gives the universal morphism $d\Omega_* \to \B_*$. 
\end{thm}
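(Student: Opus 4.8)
The plan is to construct the Grothendieck transformation $\omega: \Omega^* \to \B^*$ directly from the presentation of $\Omega^*$ as a quotient of $\M^*_\Laz$, and then to verify that it respects all the structure. First I would recall that $\M^*$ is Yokura's universal bivariant theory on the homotopy category of quasi-projective derived schemes (with proper morphisms confined, homotopy Cartesian squares independent, quasi-smooth morphisms specialized), so by the universal property of $\M^*$ there is a unique Grothendieck transformation $\M^* \to \B^*$ respecting orientations, sending a cycle $[V \xrightarrow{\alpha} X] \in \M^*(X \xrightarrow{f} Y)$ to $\alpha_*\, \theta_\B(f\circ\alpha) \in \B^*(X \to Y)$. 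Since any Borel-Moore bivariant theory $\B^*$ has $\B^*(pt)$ an $\Laz$-algebra — the formal group law of its homology $\B_*$ is classified by a map $\Laz \to \B_*(pt) = \B^*(pt)$, using the projective bundle formula $(PB)$ exactly as in Theorem \fref{UniversalOBM} — this extends $\Laz$-linearly to a Grothendieck transformation $\M^*_\Laz \to \B^*$.

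Next I would check that this map kills the bivariant ideal $\langle LS \rangle_{\M^*_\Laz}$, so that it descends to $\Omega^* = \M^*_\Laz/\langle LS\rangle$. Since a Grothendieck transformation is compatible with pushforward, pullback and bivariant product, it suffices (by the Lemma preceding Proposition \fref{InducedHomologyIsBordism}, which describes elements of $\langle LS\rangle$ as linear combinations of $g_*(\alpha \bullet f^*(r) \bullet \beta)$ with $r \in LS(W \to pt)$) to check that the image of each generator $r \in LS(W\to pt)$ vanishes in $\B^*(W\to pt)$. But $LS(W\to pt)$ is by definition the kernel of $\Theta: \Laz \otimes \mathcal{M}^+_*(W) \to d\Omega_*(W)$, and the composite $\Laz \otimes \mathcal{M}^+_*(W) \to \B_*(W)$ induced by our map is precisely the map $\psi$ from the proof of Theorem \fref{UniversalOBM} (after tensoring with $\Laz$), which — as shown there — factors through $d\Omega_*(W)$ because $\B_*$ satisfies the homotopy fibre relation, the formal group law axiom, and the SNC relations. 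Hence $LS(W\to pt)$ maps to zero, and $\omega$ is well defined on $\Omega^*$.

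For uniqueness, any Grothendieck transformation $\omega': \Omega^* \to \B^*$ respecting orientations is in particular determined on the subring $\Laz = \Omega^*(pt)$ (it must classify the formal group law of $\B^*$, since orientations are preserved and the formal group law is read off from Chern classes of line bundles via the projective bundle formula), and $\Omega^*_\Laz$ is generated over $\Laz$ by classes $[V\to X] = \alpha_*\theta(f\circ\alpha)$, which must be sent to $\alpha_*\theta_\B(f\circ\alpha)$; so $\omega' = \omega$. Finally, the restriction to homology: $\omega_{X\to pt}: \Omega^*(X\to pt) \to \B^*(X\to pt)$ is, under the identifications $\Omega^*(X\to pt) \cong d\Omega_{-*}(X)$ (Proposition \fref{InducedHomologyIsBordism}) and $\B^*(X\to pt) = \B_{-*}(X)$, exactly the map induced by $\psi$, which is the universal morphism $d\Omega_* \to \B_*$ of Theorem \fref{UniversalOBM}.

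The step I expect to be the main obstacle is the well-definedness on $\langle LS\rangle$ — specifically, making fully rigorous that the $\Laz$-linearized map $\Laz \otimes \mathcal{M}^+_*(W) \to \B_*(W)$ really does coincide with $\psi \otimes \Laz$ from the proof of Theorem \fref{UniversalOBM}, and hence factors through $\Theta$. This requires unwinding the definition of the Grothendieck transformation $\M^*_\Laz \to \B^*$ on the homology groups $\M^{-*}(W\to pt) = \mathcal{M}^+_{-*}(W)$ and matching it with the formula $[f:V\to W]\mapsto f_*\pi_V^!(1)$; the compatibility of our transformation with pushforward and with orientations (so that $(f\circ\alpha)^*1_{pt} = \theta_\B(f\circ\alpha)$ pushes to $f_*\pi_V^!(1)$ when the target is $pt$) is what makes this work, but it needs to be spelled out carefully. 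Everything else is formal manipulation with the bivariant axioms and the already-established Theorem \fref{UniversalOBM}.
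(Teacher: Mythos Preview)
Your proof is correct and follows essentially the same approach as the paper: invoke Yokura's universal property to get $\M^* \to \B^*$, extend $\Laz$-linearly, and descend through the quotient by $\langle LS\rangle$ using the universality of $d\Omega_*$ (Theorem \ref{UniversalOBM}). The paper's proof is much terser---it simply cites these two universal properties and declares the last claim trivial---whereas you have spelled out the descent step (via the lemma on the shape of elements of $\langle LS\rangle$, though noting that the kernel of a Grothendieck transformation is automatically a bivariant ideal would shorten this) and the uniqueness argument; the ``main obstacle'' you flag is a routine unwinding rather than a genuine difficulty.
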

\begin{proof}
By the universal property given in \cite{Yo1} there exists a unique Grothendieck transformation $\M^* \to \B^*$ respecting orientations. Using the universality of $d\Omega_*$ and the fact that $\B_*$ is an oriented Borel-Moore homology, we see that this map descends to a map $\Omega^* \to \B^*$. The last claim is trivial.
\end{proof}

Any Borel-Moore bivariant theory contains an oriented Borel-Moore homology theory, and hence it has a formal group law for first Chern classes (at this moment, we think them as operations on homology groups). We say that a Borel-Moore bivariant theory $\B^*$ is \emph{additive/multiplicative/periodic} if the corresponding formal group law has this property. Regard $\Z$ and $\Z [\beta, \beta^{-1}]$ as $\Laz$-algebras via the maps classifying the universal additive and the universal multiplicative periodic formal group laws respectively. The Borel-Moore bivariant theories $\Omega^*_{\mathrm{ad}} = \Omega^* \otimes_{\Laz} \Z$ and $\Omega^*_{\beta, \beta^{-1}} = \Omega^* \otimes_\Laz \Z[\beta, \beta^{-1}]$ are clearly the universal additive and the universal multiplicative and periodic Borel-Moore bivariant theories, respectively.  

\subsection{Chern classes in $\Omega^*$}\label{ChernClassSect}

Let $X$ be a quasi-projective derived scheme and let $E$ be a vector bundle of rank $r$ on $X$. The purpose of this section is to construct characteristic classes 
$$c_i(E) \in \Omega^i(X)$$
for $1 \leq i \leq r$ called the \emph{Chern classes} of the vector bundle $E$. We note that the Grothendieck approach to constructing Chern classes by invoking the so called splitting principle can not be applied for our cobordism rings $\Omega^*$, as the projective bundle formula is not known whenever $X$ is not smooth. The main results of this subsection make heavy use on the characteristic 0 assumption, as the Levine-Morel algebraic cobordism is not known to have any desirable properties in positive characteristic.

Recall that in the book \cite{LM} Levine and Morel construct Chern class operators $c_i(E) \cap: \Omega_*(X) \to \Omega_{*-i}(X)$ for any quasi-projective classical scheme $X$ and any vector bundle $E$ on $X$ in \cite{LM} Section 4.1.7. Hence, by identifying $c_i(E) \cap 1_X$ with an element of the cobordism ring $\Omega^i(X)$ using Poincaré duality \fref{PoincareDuality}, we obtain the desired Chern classes for smooth schemes. These classes satisfy the usual properties of Chern classes (see \cite{LM} Proposition 4.1.15).

\begin{rem}\label{AmbiguousFirstChernClass}
Recall that in Section \fref{AlgebraicBordismSect} we defined the first Chern class operator as $c_1(\Li) \cap = s^! s_*$, where $s: X \to Y$ is the zero section. However, when $X$ is classical, this definition coincides with the one given by Levine and Morel. This follows from \cite{LS2} Proposition 3.12 and \cite{LM} Lemma 7.4.1 (1) (see also \cite{LM} Definition 5.3.1).
\end{rem} 

The rest of the subsection is devoted to extending the theory of Chern classes to arbitrary quasi-projective derived schemes. We begin with globally generated vector bundles.

\begin{cons}[Chern classes for globally generated vector bundles]
Suppose $X$ is a quasi-projective derived $k$-scheme, and $E$ is a globally generated vector bundle of rank $r$. Let $s_1,...,s_n$ be global sections of $E$ generating the truncation $E$. This data gives rise to a map $f: X \to \Gr(r, n)$, unique up to homotopy (see Corollary \fref{GlobalGenerationToGrassmannian}), and $E$ is obtained by pulling back the universal quotient bundle $\mathcal{E}$ on $\Gr(r,n)$. As Grassmannians are smooth, they already have a good notion of Chern-classes, and hence we can define the Chern classes $c_i(E)$ of $E$ on $X$ to be the pullbacks $f^*(c_i(\mathcal{E}))$.
\end{cons}

The above construction is well defined.

\begin{lem}
Let $X$ be a quasi-projective derived scheme and let $E$ be a globally generated vector bundle on $X$. Now the Chern classes $c_i(E) \in \Omega^i(X)$ do not depend on the choice of global generators of $E$.
\end{lem}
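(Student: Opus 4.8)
The plan is to reduce the independence statement to a comparison between two maps into a single Grassmannian, and then to invoke the fact that the Chern classes on smooth schemes (here, Grassmannians) are already well-defined and functorial under pullback. Suppose $s_1,\dots,s_n$ and $t_1,\dots,t_m$ are two systems of global generators of $E$, giving maps $f\colon X \to \Gr(r,n)$ and $g\colon X \to \Gr(r,m)$ (each unique up to homotopy by Corollary \fref{GlobalGenerationToGrassmannian}), with $E \simeq f^*\mathcal{E}_n \simeq g^*\mathcal{E}_m$. First I would pass to the combined system of generators: the tuple $(s_1,\dots,s_n,t_1,\dots,t_m)$ also generates $E$, yielding a map $h\colon X \to \Gr(r,n+m)$. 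The standard linear embeddings $\iota\colon \Gr(r,n)\hookrightarrow \Gr(r,n+m)$ and $\iota'\colon\Gr(r,m)\hookrightarrow\Gr(r,n+m)$ (add zero coordinates) pull back the universal quotient bundle on $\Gr(r,n+m)$ to the universal quotient bundles on $\Gr(r,n)$ and $\Gr(r,m)$ respectively, and they are maps of smooth schemes, so by the known functoriality of Chern classes on smooth schemes we have $\iota^*c_i(\mathcal{E}_{n+m}) = c_i(\mathcal{E}_n)$ and likewise for $\iota'$.

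The key geometric input is then that $\iota\circ f \simeq h \simeq \iota'\circ g$ as maps $X \to \Gr(r,n+m)$, i.e. they lie in the same path component of the mapping space. This is exactly the kind of statement covered by Proposition \fref{Grassmannian}: a map to $\Gr(r,n+m)$ is the same datum as a surjection $\OO_X^{\oplus(n+m)}\twoheadrightarrow F$ onto a vector bundle, up to equivalence under $\OO_X^{\oplus(n+m)}$. Both $\iota\circ f$ and $h$ correspond to the surjection $\OO_X^{\oplus(n+m)}\to E$ determined by $(s_1,\dots,s_n,0,\dots,0)$ versus $(s_1,\dots,s_n,t_1,\dots,t_m)$; I would argue these are connected by an explicit path in $\mathcal{C}_X$, for instance by scaling the $t_j$ components down to zero using that $\A^1$ represents global sections of the structure sheaf (the remark in the proof of Proposition \fref{Grassmannian}), so the two surjections are homotopic through surjections. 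The same argument connects $\iota'\circ g$ to $h$. Hence all three maps are homotopic.

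Putting this together: since $\Omega^*$ is a bivariant theory on the homotopy category, the pullback $h^*$ depends only on the homotopy class of $h$, so
\begin{equation*}
f^*c_i(\mathcal{E}_n) = f^*\iota^* c_i(\mathcal{E}_{n+m}) = h^* c_i(\mathcal{E}_{n+m}) = g^*\iota'^* c_i(\mathcal{E}_{n+m}) = g^* c_i(\mathcal{E}_m),
\end{equation*}
which is precisely the independence of $c_i(E)$ from the choice of generators.

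The main obstacle I expect is the homotopy $\iota\circ f \simeq h$: one must check not merely that the two surjections $\OO_X^{\oplus(n+m)}\to E$ have the same target, but that the obvious linear interpolation stays surjective at every point of the parameter $\A^1$ (equivalently on the truncation). Surjectivity is an open condition on $\tau_0 X \times \A^1$, and it holds on $\{0\}$ and fails nowhere along the segment precisely because the first $n$ sections already generate $E$ throughout the interpolation; making this rigorous in the derived setting is where Proposition \fref{Grassmannian} and the representability of $\A^1$ do the real work, but it is routine once those are granted. Everything else — the behavior of $\mathcal{E}$ under the standard embeddings of Grassmannians and the functoriality of Chern classes on smooth schemes — is classical and already available.
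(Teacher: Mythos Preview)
Your reduction via the embeddings $\iota,\iota'$ into $\Gr(r,n+m)$ is sound, but the key step---that $\iota\circ f$ and $h$ lie in the same path component of the derived mapping space $\map(X,\Gr(r,n+m))$---is false in general, and the argument you give for it conflates two unrelated notions of homotopy. A path in the $\infty$-groupoid $\mathcal{C}_X$ of Proposition \fref{Grassmannian} is an equivalence under $\OO_X^{\oplus(n+m)}$, i.e.\ an automorphism of $E$ intertwining the two surjections; it is \emph{not} an $\A^1$-family of surjections. Scaling the $t_j$ to zero produces an $\A^1$-homotopy, but $\A^1$-homotopic maps need not be equivalent in the homotopy category of derived schemes (if $X$ is classical the mapping space is discrete, and $\iota\circ f$, $h$ are visibly distinct maps with different images). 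Since $\Omega^*$ is defined on the homotopy category and is explicitly \emph{not} $\A^1$-invariant (Remark \fref{HomotopyInvarianceFails}), you cannot conclude $(\iota\circ f)^* = h^*$ this way.

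The paper's proof confronts exactly this issue. Instead of trying to identify the two classifying maps, it builds a $\Proj^1$-family: the sections $x_0 s_i, x_1 t_j$ generate $\widetilde E(1)$ on $\Proj^1\times X$, giving a single map $\Proj^1\times X \to \Gr(r,n+m)$ whose restrictions to the fibres over $0$ and $\infty$ recover (up to the linear embeddings) $g$ and $f$. One then pulls back $c_i(\mathcal{E}_{n+m})$ to $\Omega^*(\Proj^1\times X)$ and invokes the bivariant homotopy fibre relation (Proposition \fref{BivariantHomotopyFibre})---a relation built into $\Omega^*$ itself---to conclude that the two fibre-restrictions agree. In short, the ``$\A^1$-interpolation'' you want does exist, but its effect on cobordism classes is mediated by a relation in $\Omega^*$, not by an equivalence of maps; compactifying to $\Proj^1$ and twisting by $\OO(1)$ is what makes that relation applicable.
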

\begin{proof}
Suppose we have two sequences $s_1,...,s_n$ and $s'_1,...,s'_{n'}$ of global generators of $E$, defining maps $f: X \to \Gr(r,n)$ and $g: X \to \Gr(r,n')$ respectively. We need to make sure that the Chern classes obtained by pulling back along $g$ agree with those pulled back along $f$. We note that $x_0 s_1,...,x_0s_n,x_1 s'_1,...,x_1 s'_{n'}$ are global sections of the vector bundle $\widetilde E (1)$ on $\Proj^1 \times X$, where $\widetilde E$ is the pullback of $E$. This defines a map $h: \Proj^1 \times X \to \Gr(r,n+n')$. Moreover, it is clear that the restrictions of the source factor as
\begin{align*}
0 \times X \stackrel g \to \Gr(r,n') \hookrightarrow \Gr(r,n+n')
\end{align*}
and
\begin{align*}
\infty \times X \stackrel f \to \Gr(r,n') \hookrightarrow \Gr(r,n+n').
\end{align*}
This shows that the Chern classes $c_i(\widetilde E(1))$ on $\Proj^1 \times X$ (defined via the map $h$) pull back to the Chern classes defined via the map $g$ when we pull back along $0 \times X \hookrightarrow \Proj^1 \times X$, and to the Chern classes defined via the map $f$ when we pull back along $\infty \times X \hookrightarrow \Proj^1 \times X$. By the bivariant homotopy fibre relation (see Proposition \fref{BivariantHomotopyFibre}), these classes agree, and hence the Chern classes of globally generated vector bundles are well defined.
\end{proof}

Extending to arbitrary vector bundles is now a matter of formal trickery. Denote by $F$ the universal formal group law, and by $F_-$ the associated universal difference group law. We remind the reader that these are ''inverses'' of each other: $F_-(F(x,y),y) = x$ and $F(F_-(x,y),y) = x$. Denote by $s_i$ the $i^{th}$ symmetric polynomial on $r$ variables $\mu_1,...,\mu_r$. We note that
\begin{equation*}
G^i(\mu_1,...,\mu_r,x) := s_i(F(\mu_1,x),...,F(\mu_r,x))
\end{equation*}
and 
\begin{equation*}
G_-^i(\mu_1,...,\mu_r,x) := s_i(F_-(\mu_1,x),...,F_-(\mu_r,x))
\end{equation*}
are $\Laz$ valued power series symmetric in the variables $\mu_i$, and hence have expressions $H^i(s_1,...,s_n,x)$ and $H^i_-(s_1,...,s_n,x)$ involving only $x$ and the elementary symmetric polynomials $s_i$. Now we have that 
\begin{equation*}
H^i(G_-^1(\mu_1,...,\mu_r,x),...,G_-^r(\mu_1,...,\mu_r,x),x) = s_i
\end{equation*}
and 
\begin{equation*}
H_-^i(G^1(\mu_1,...,\mu_r,x),...,G^r(\mu_1,...,\mu_r,x),x) = s_i,
\end{equation*}
as these amount to the same as
\begin{equation*}
s_i(F(F_-(\mu_1,x),x),...,F(F_-(\mu_r,x),x))
\end{equation*}
and 
\begin{equation*}
s_i(F_-(F(\mu_1,x),x),...,F_-(F(\mu_r,x),x)),
\end{equation*}
and because of the aforementioned property that $F$ and $F_-$ are inverses of each other. Similar argument shows that associativity results such as
\begin{equation*}
H^i(G^1(\mu_1,...,\mu_r,x),...,G^r(\mu_1,...,\mu_r,x),y) = G^i(\mu_1,...,\mu_r,F(x,y))
\end{equation*}
and
\begin{equation*}
H^i_-(G^1_-(\mu_1,...,\mu_r,x),...,G^r_-(\mu_1,...,\mu_r,x),y) = G^i_-(\mu_1,...,\mu_r,F(x,y))
\end{equation*}
hold.

Pulling back formulas over smooth varieties, we know that whenever $E$ is a globally generated vector bundle of rank $r$ on $X$, and $\Li$ is a globally generated line bundle, we have that $c_i(E \otimes \Li) = H^i(c_1(E),...,c_r(E),c_1(\Li))$. We are now ready to extend the definition of Chern classes to arbitrary vector bundles.

\begin{cons}[Chern classes for arbitrary vector bundles]
Suppose that $E$ is an arbitrary vector bundle on $X$. Now there exists a globally generated line bundle $\Li$ on $X$ such that the twist $E \otimes \Li$ is globally generated (see Proposition \fref{GlobalGeneration}). If we want to extend the Chern classes $c_i$ to all vector bundles in a way that respects the above twisting formulas, we are forced to set
\begin{equation*}
c_i(E) := H^i_-(c_1(E \otimes \Li),...,c_r(E \otimes \Li), c_1(\Li)).
\end{equation*}
\end{cons}

\begin{lem}
Let $X$ be a quasi-projective derived scheme and let $E$ be a vector bundle on $X$. Now the Chern classes $c_i(E) \in \Omega^{i}(X)$ do not depend on the choice of the globally generated line bundle $\Li$.
\end{lem}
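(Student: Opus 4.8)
The plan is to reduce the independence-of-$\Li$ statement for arbitrary vector bundles to the already-established independence statement for globally generated vector bundles, using a trick analogous to the one in the previous lemma. Suppose $\Li$ and $\Li'$ are two globally generated line bundles such that both $E \otimes \Li$ and $E \otimes \Li'$ are globally generated. Unwinding the definition, I must show
\begin{equation*}
H^i_-(c_1(E \otimes \Li),\dots,c_r(E \otimes \Li), c_1(\Li)) = H^i_-(c_1(E \otimes \Li'),\dots,c_r(E \otimes \Li'), c_1(\Li')).
\end{equation*}
First I would handle the case where $\Li' = \Li \otimes \mathcal{M}$ with $\mathcal{M}$ itself globally generated (so $\Li'$ is a "further twist" of $\Li$). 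In that case $E \otimes \Li'$ is obtained from the globally generated bundle $E \otimes \Li$ by tensoring with the globally generated line bundle $\mathcal{M}$, so the twisting formula $c_i((E\otimes\Li)\otimes\mathcal{M}) = H^i(c_1(E\otimes\Li),\dots,c_r(E\otimes\Li),c_1(\mathcal{M}))$ applies. Substituting this into the right-hand side above and using the composite identities $H^i_-(H^1(\dots),\dots,H^r(\dots),\,F(x,y))$-type formulas established just before the construction — together with $c_1(\Li') = c_1(\Li \otimes \mathcal{M}) = F(c_1(\Li), c_1(\mathcal{M}))$, which holds since $\Li$, $\mathcal{M}$, $\Li'$ are all globally generated line bundles on $X$ and the formal group law formula for tensor products of globally generated line bundles pulls back from the smooth (Grassmannian/product of projective spaces) case — the two sides collapse to the same expression in $c_1(E\otimes\Li)$ and $c_1(\Li)$.

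Next I would reduce the general case of two arbitrary admissible $\Li, \Li'$ to the previous case by finding a common further twist. By Proposition \fref{GlobalGeneration} there is some globally generated $\mathcal{N}$ with $\Li \otimes \Li' \otimes \mathcal{N}$ globally generated; then $\Li'' := \Li \otimes \Li' \otimes \mathcal{N}$ is globally generated, $E \otimes \Li'' = (E \otimes \Li) \otimes (\Li' \otimes \mathcal{N})$ is globally generated (tensor of a globally generated bundle with a globally generated line bundle), and likewise $E \otimes \Li'' = (E \otimes \Li') \otimes (\Li \otimes \mathcal{N})$ is globally generated. So $\Li''$ is a common admissible twist of both $\Li$ and $\Li'$ of the form handled in the first step, giving
\begin{equation*}
c_i(E)_{\Li} = c_i(E)_{\Li''} = c_i(E)_{\Li'},
\end{equation*}
where the subscript records the line bundle used in the construction. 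This finishes the argument.

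The main obstacle, as in the earlier lemma, is making sure that each twisting and formal-group-law identity I invoke is actually \emph{known} in $\Omega^*(X)$ for the possibly-singular derived scheme $X$ — a priori these identities are only guaranteed on smooth schemes where Poincaré duality and the Levine–Morel formalism apply. The resolution is that every identity used here involves only \emph{globally generated} vector bundles and \emph{globally generated} line bundles, whose Chern classes are by construction pulled back from Grassmannians and projective spaces (which are smooth); hence each such identity is obtained by pulling back, along a map $X \to \Gr \times \Proj^N$, an identity that holds on the smooth target. One should state this "pull back formulas over smooth varieties" principle once (it is already invoked in the text just before the construction) and then the rest is the purely formal power-series manipulation sketched above. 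I expect no genuine difficulty beyond keeping the bookkeeping of which bundle plays which role straight.
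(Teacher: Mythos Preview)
Your proposal is correct and follows essentially the same route as the paper: reduce to a common twist and use the formal power-series identities $H^i_-(H^1(\ldots),\ldots,H^r(\ldots),y)$ together with the formal group law for globally generated line bundles. The only difference is that your second step is unnecessary: since both $\Li$ and $\Li'$ are already globally generated by hypothesis, so is $\Li \otimes \Li'$, and $(E\otimes\Li)\otimes\Li'$ is globally generated as well --- hence you may take $\Li'' = \Li \otimes \Li'$ directly (i.e.\ $\mathcal{N} = \OO_X$), which is exactly what the paper does in one line.
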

\begin{proof}
Indeed, suppose that $\Li'$ is another globally generated line bundle such that $E \otimes \Li'$ is globally generated. Now $E \otimes \Li \otimes \Li'$ is globally generated, and by the associativity and inverse properties of $G^i$ and $G^i_-$ we conclude that the original definition of $c_i(E)$ in terms of $\Li$ agrees with
\begin{equation*}
H^i_-(c_1(E \otimes \Li \otimes \Li'),...,c_r(E \otimes \Li \otimes \Li'), c_1(\Li \otimes \Li')).
\end{equation*}
\end{proof}
 
We are now ready to verify expected properties of the Chern classes. As usual, the \emph{total Chern class} of a rank $r$ vector bundle is $c(E) := 1 + c_1(E) + \cdots + c_r(E)$, and the \emph{top Chern class} is $c_{\mathrm{top}}(E) := c_r(E)$.

\begin{thm}\label{ChernClasses}
The Chern classes in $\Omega^*$ defined above satisfy the following properties:
\begin{enumerate}[i.]
\item \emph{Contravariant functoriality:} $c_i(f^*E) = f^*(c_i(E))$.
\item \emph{Whitney sum formula:} whenever we have a short exact sequence
\begin{equation*}
0 \to E' \to E \to \overline E \to 0
\end{equation*}
of vector bundles on $X$, we have $c(E) = c(E') \bullet c(\overline E)$.
\item \emph{Normalization:} suppose $E$ is a vector bundle on $X$ with zero section $s$. Define the Euler class $e(E)$ of $E$ by $s^* s_!(1_X)$. Now $e(E) = c_\mathrm{top}(E)$.
\item \emph{Formal group law:} suppose $\Li$ and $\Li'$ are line bundles on $X$. Now
\begin{equation*}
c_1(\Li \otimes \Li') = F(c_1(\Li), c_1(\Li))
\end{equation*}
where $F$ is the universal formal group law over $\Laz$.
\end{enumerate}
\end{thm}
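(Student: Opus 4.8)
The plan is to deduce all four properties from the corresponding statements for the Levine--Morel Chern classes on smooth classical schemes, where they hold by \cite{LM} Proposition 4.1.15 and with which, on smooth schemes, our classes coincide (via Poincar\'e duality \fref{PoincareDuality}). In each case this proceeds in two steps. First, for a \emph{globally generated} bundle one exhibits the relevant datum as pulled back, along the classifying map of \fref{GlobalGenerationToGrassmannian}, from the universal datum on a suitable smooth projective parameter scheme (a Grassmannian, a product of Grassmannians, a Grassmann bundle, or the total space of a universal bundle), where the identity holds by \cite{LM}; one then transports it to $X$ by bivariant pullback. Second, one removes the global generation hypothesis by twisting with a globally generated line bundle $\Li$ (using \fref{GlobalGeneration}): since the defining twisting formulas $c_i(E) = H^i_-(c_1(E \otimes \Li), \dots, c_r(E \otimes \Li), c_1(\Li))$ are built out of the universal (difference) formal group law, each identity for the twisted bundles formally implies the same identity for the original ones, the implication being a manipulation with the power series $H^\bullet, H^\bullet_-, G^\bullet, G^\bullet_-$ and the inverse relations between $F$ and $F_-$ recorded before the construction.

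Property (i) is the template. For $E$ globally generated with classifying map $f_E \colon X \to \Gr(r,n)$ and any $g \colon X' \to X$, the bundle $g^* E$ is globally generated with classifying map $f_E \circ g$, so $c_i(g^* E) = (f_E g)^* c_i(\mathcal{E}) = g^* c_i(E)$ by functoriality of the bivariant pullback; for arbitrary $E$ one twists and uses that $g^*$ is a ring homomorphism commuting with the $\Laz$-linear operations $H^i_-$. Property (iv) is similar in spirit: for globally generated line bundles $\Li, \Li'$ the pair classifies a map $X \to \Proj^{n-1} \times \Proj^{m-1}$ along which $\Li \otimes \Li'$ pulls back from $\OO(1) \boxtimes \OO(1)$, and the identity $c_1(\OO(1) \boxtimes \OO(1)) = F(c_1(\OO(1) \boxtimes 1), c_1(1 \boxtimes \OO(1)))$ on the smooth scheme $\Proj^{n-1} \times \Proj^{m-1}$ is exactly the defining property of the formal group law of $d\Omega_*$; one then pulls back by (i). For general $\Li, \Li'$, pick $n, m$ with $\Li(n), \Li'(m)$ globally generated, apply the rank-one twisting formula $c_1(\Li(n)) = F(c_1(\Li), c_1(\OO(n)))$ (and likewise for $\Li'$ and for $(\Li \otimes \Li')(n+m) = \Li(n) \otimes \Li'(m)$), use the globally generated case together with the commutativity and associativity of $F$ to rewrite, and finally apply $F_-(-, c_1(\OO(n+m)))$ to both sides. (The right-hand side of (iv) should of course read $F(c_1(\Li), c_1(\Li'))$.)

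For property (iii), when $E$ is globally generated the zero-section square over $X$ is the pullback along $f_E$ of the zero-section square of the universal bundle $\mathcal{E}$ on $\Gr(r,n)$; since the orientation is stable under pullback, $e(E) = s^* s_!(1_X) = f_E^*(e(\mathcal{E}))$, and $e(\mathcal{E}) = c_{\mathrm{top}}(\mathcal{E})$ on the smooth Grassmannian by \cite{LM}, whence $e(E) = c_{\mathrm{top}}(E)$. For arbitrary $E$ one again twists; the point to check is that the Euler class obeys the same twisting formula as $c_{\mathrm{top}}$, which follows because it does so on the universal bundle (by the smooth case) and the formation of the Euler class commutes with pullback.

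The genuinely delicate case is the Whitney sum formula (ii). Given a short exact sequence $0 \to E' \to E \to \overline{E} \to 0$, twist by $\OO(N)$ for $N \gg 0$ so that $E'(N)$, $E(N)$ and $\overline{E}(N)$ are all globally generated. Choosing $n$ generators of $E(N)$ and remembering the quotient map $E(N) \twoheadrightarrow \overline{E}(N)$ should produce a morphism $f \colon X \to G$, where $\pi \colon G \to \Gr(r,n)$ is the Grassmann bundle of rank-$(\operatorname{rank}\overline{E})$ quotients of the universal bundle $\mathcal{E}$; this $G$ is smooth and projective and carries a universal short exact sequence $0 \to \mathcal{S} \to \pi^* \mathcal{E} \to \mathcal{E}'' \to 0$, and by the relative form of the moduli description \fref{Grassmannian} the pullback of this sequence along $f$ is precisely $0 \to E'(N) \to E(N) \to \overline{E}(N) \to 0$. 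The Whitney formula on the smooth scheme $G$ (\cite{LM}) together with (i) then gives $c(E(N)) = c(E'(N)) \bullet c(\overline{E}(N))$, and the twisting reduction of the first paragraph upgrades this to $c(E) = c(E') \bullet c(\overline{E})$. The two things requiring care here are: setting up the classifying map $f$ and the relative Grassmann-bundle moduli statement in the derived $\infty$-categorical setting; and verifying that the identity $c(E(N)) = c(E'(N)) \bullet c(\overline{E}(N))$ formally forces $c(E) = c(E') \bullet c(\overline{E})$ through the $H^\bullet, G^\bullet$ formalism, which amounts to the (universal, but slightly involved) fact that the twisting operation encoded by $H^\bullet_-$ is multiplicative in short exact sequences. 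I expect this verification, rather than any geometric input, to be the main obstacle.
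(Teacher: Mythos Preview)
Your treatment of (i), (iii), (iv) matches the paper's: reduce to the globally generated case via a classifying map to a smooth parameter scheme, then untwist. The paper is much terser --- for (iii) and (iv) it simply says these ``can be trivially pulled back from smooth varieties where they are already known to hold'' --- but the content is the same.

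For (ii) the two arguments genuinely diverge. You propose classifying the whole short exact sequence by a map to a relative Grassmannian (a flag bundle over $\Gr(r,n)$) and pulling back the universal Whitney identity. The paper instead first treats the \emph{split} case $E \cong E' \oplus \overline{E}$ by pulling the pair $(E',\overline{E})$ back from a single smooth scheme, and then reduces the general case to the split one by a deformation over $\Proj^1$: there is a vector bundle on $\Proj^1 \times X$ restricting to $E$ over one point and to $E' \oplus \overline{E}$ over the other, and the bivariant homotopy fibre relation (Proposition~\fref{BivariantHomotopyFibre}) forces $c(E) = c(E' \oplus \overline{E})$. This sidesteps both of the points you flag as delicate: no derived moduli interpretation of a flag bundle is needed, and there is no ``untwisting of a product'' to verify, since the deformation argument applies directly to the original sequence.

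Your approach works too, but the untwisting step you anticipate as the main obstacle is lighter than you suggest. Once (i) is in hand, you do not need to manipulate $H^\bullet_-$ at all: simply enlarge the smooth target from $G$ to $G \times \Proj^m$, pulling back the tautological sequence on $G$ tensored with $\OO_{\Proj^m}(-N)$. Along the resulting map $X \to G \times \Proj^m$ the \emph{untwisted} sequence $0 \to E' \to E \to \overline{E} \to 0$ itself is pulled back from a smooth scheme, and (i) plus Whitney on $G \times \Proj^m$ finishes. The only residual cost of your route, compared with the paper's, is establishing the relative Grassmannian universal property in the derived setting.
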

\begin{proof}
To prove $i.$, we first note that the Chern classes of globally generated vector bundles are natural in pullbacks. Indeed, suppose we have a globally generated vector bundle $E$ on $Y$, and a map $f: X \to Y$. Let $s_1,...,s_n$ be global sections generating $E$, and hence defining a map $Y \to \Gr$ to some Grassmannian. Now the data $(f^* E, f^*s_1,...,f^*s_n)$ corresponds to the map $X \stackrel f \to Y \to \Gr$, and by the functoriality of the bivariant pullback we can conclude that $c_i(f^* E) = f^* c_i(E)$. Extending this to arbitrary vector bundles is now a simple matter of choosing a globally generated line bundle $\Li$ on $Y$ such that $E \otimes \Li$ is globally generated and invoking the (inverse) twisting formula of the previous discussion.

For $ii.$, we note that if $E$ is the direct sum of $E'$ and $\overline E$, then we can pull back $E'$, $\overline E$ and $E$ from the same smooth variety, and hence we can pull back the relation $c(E')c(\overline E) = c(E)$ as well. In the general case, we note that the map $f: E \to \overline E$ defines the graph subbundle of $E \oplus \overline E$ locally consisting of elements $(e,f(e))$. Pulling back to $\Proj^1 \times X$ we get the subbundle of $E (1) \oplus \overline E(1)$ locally consisting of pairs $(x_0 e, x_1 f(e))$. Pulling back this bundle along $\infty \times X$ yields $E' \oplus \overline E$ whereas pulling back along $0 \times X$ gives $E$. By the homotopy fibre relation (Proposition \fref{BivariantHomotopyFibre}), we see that $c(E) = c(E' \oplus \overline E)$, proving $ii.$ The third and the fourth parts part can be trivially pulled back from smooth varieties where they are already known to hold, so we are done.
\end{proof}

We record here a rather trivial but useful fact:

\begin{thm}
Let $E$ be a vector bundle on a quasi-projective derived scheme $X$. Then the Chern classes $c_i(E)$ are nilpotent in $\Omega^*(X)$.
\end{thm}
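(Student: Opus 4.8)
The plan is to reduce the nilpotence of $c_i(E)$ to the case of a globally generated vector bundle, and then to the case of the universal quotient bundle on a Grassmannian, where nilpotence is already known. First I would observe that it suffices to treat globally generated vector bundles: for an arbitrary $E$, choose a globally generated line bundle $\Li$ with $E \otimes \Li$ globally generated (Proposition \fref{GlobalGeneration}); by the defining formula $c_i(E) = H^i_-(c_1(E \otimes \Li),\dots,c_r(E \otimes \Li),c_1(\Li))$, each $c_i(E)$ is a power series (with $\Laz$-coefficients) in the classes $c_j(E \otimes \Li)$ and $c_1(\Li)$, each of which has positive cohomological degree; since $\Omega^*(X)$ is nonnegatively graded and each graded piece is what it is, a power series in positively-graded nilpotents whose constant term vanishes is again nilpotent, provided finitely many of the $c_j(E\otimes\Li), c_1(\Li)$ already are. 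So the whole question reduces to globally generated bundles.

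Next, for $E$ globally generated of rank $r$, there is a map $f\colon X \to \Gr(r,n)$ with $E \simeq f^*\mathcal{E}$ for the universal quotient bundle $\mathcal{E}$, and by contravariant functoriality (Theorem \fref{ChernClasses}(i)) we have $c_i(E) = f^*(c_i(\mathcal{E}))$. Since $f^*$ is a ring homomorphism $\Omega^*(\Gr(r,n)) \to \Omega^*(X)$, it is enough to know that $c_i(\mathcal{E})$ is nilpotent in $\Omega^*(\Gr(r,n))$. But $\Gr(r,n)$ is a smooth projective variety, so by Poincaré duality (Proposition \fref{PoincareDuality}) $\Omega^*(\Gr(r,n)) \cong d\Omega_*(\Gr(r,n)) = \Omega_*(\Gr(r,n))$ in the sense of Levine--Morel, which is a finitely generated $\Laz$-module concentrated in finitely many (co)homological degrees; hence $\Omega^{>N}(\Gr(r,n)) = 0$ for $N$ large, and any positive-degree element — in particular each $c_i(\mathcal{E})$, which lives in degree $i \geq 1$ — is nilpotent.

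The main point to be careful about is the reduction step in the first paragraph: one must check that substituting positively-graded nilpotent elements into an $\Laz$-valued power series yields a well-defined and nilpotent element of $\Omega^*(X)$. This is where the grading is essential — because $c_1(\Li)$ and the $c_j(E\otimes\Li)$ all have cohomological degree $\geq 1$, only finitely many monomials of any power series contribute in each fixed degree, so the substitution makes sense; and since these finitely many generating classes are themselves nilpotent (by the Grassmannian argument applied to $\Li$ viewed as a rank-one globally generated bundle and to $E \otimes \Li$), a high enough power of any polynomial in them with zero constant term vanishes. I expect this bookkeeping with the grading and the finite-generation of $\Omega^*$ of smooth projective varieties to be the only real content; everything else is a direct appeal to functoriality and Poincaré duality already established above.
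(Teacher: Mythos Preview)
Your argument is correct and rests on the same core idea as the paper --- reduce to a smooth scheme and use that $\Omega^i(Y)=0$ for $i>\dim Y$ there --- but the paper reaches the conclusion more directly. Rather than first reducing to globally generated bundles and then worrying about substituting nilpotents into the power series $H^i_-$, the paper simply observes that \emph{any} vector bundle $E$ on $X$ is pulled back from a smooth scheme: with $\Li$ and $E\otimes\Li$ globally generated one gets maps $g\colon X\to\Proj^m$ and $f\colon X\to\Gr(r,n)$, and $E \simeq (f,g)^*\bigl(\pi_1^*\mathcal{E}\otimes\pi_2^*\OO(-1)\bigr)$ on $\Gr(r,n)\times\Proj^m$. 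Contravariant functoriality of Chern classes (Theorem~\fref{ChernClasses}~$i.$) then gives $c_i(E)=(f,g)^*c_i(\pi_1^*\mathcal{E}\otimes\pi_2^*\OO(-1))$, and since ring homomorphisms preserve nilpotents, you are done without ever touching the twisting formula. Your two-step reduction works, but the bookkeeping about power series in positively-graded nilpotents is avoidable; the paper's route also makes clear that nothing beyond functoriality and the vanishing of $\Omega^{>\dim}$ on smooth varieties is needed.
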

\begin{proof}
This is known to hold for smooth schemes (after all, by the construction of Levine and Morel, $\Omega^i(X) \cong 0$ for $i > \dim(X)$). We are finished by noting that any vector bundle can be pulled back from a smooth scheme, and a ring homomorphism sends nilpotent elements to nilpotent elements.
\end{proof}

Finally, we show that the action of these newly defined cohomology Chern classes on the homology groups coincides with the action of original Chern classes in algebraic bordism $\Omega_*$.

\begin{prop}
Suppose $E$ is a vector bundle on a quasi-projective scheme $X$ and $\alpha \in \Omega_*(X)$. Now $c_i(E) \cap \alpha$ as defined in \cite{LM} Section 4.1.7 (denoted there by $c_i(E)(\alpha)$) agrees with $c_i(E) \bullet \alpha$.
\end{prop}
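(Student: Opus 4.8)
The plan is a short dévissage to smooth schemes. By the very construction of $\Omega_*$ in terms of cobordism cycles, $\Omega_*(X)$ is generated over $\Laz$ by classes of the form $p_*(\beta)$ with $p\colon W\to X$ projective, $W$ a smooth quasi-projective scheme, and $\beta\in\Omega_*(W)$ --- indeed $[W\xrightarrow{p}X;\Li_1,\dots,\Li_r] = p_*([\,\mathrm{id}_W;\Li_1,\dots,\Li_r\,])$. Moreover both operations $\gamma\mapsto c_i(E)\bullet\gamma$ and $\gamma\mapsto c_i(E)\cap\gamma$ commute with proper pushforward, with $E$ replaced by its pullback: on the Levine--Morel side this is the projection formula for Chern class operators proved in \cite{LM}, and on the bivariant side it is the push--pull axiom together with the functoriality $p^*c_i(E) = c_i(p^*E)$ (Theorem \fref{ChernClasses}). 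Writing $\gamma = \sum_k p_{k*}(\beta_k)$ with the $W_k$ smooth, the identity for $(X,E,\gamma)$ thus follows from the identities $c_i(p_k^*E)\bullet\beta_k = c_i(p_k^*E)\cap\beta_k$ on the smooth schemes $W_k$, so I am reduced to the case $X=W$ smooth.

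For smooth $W$ the identity is essentially the definition. By \cite{LM}, the Chern class operator $c_i(F)\cap-$ on a smooth $W$ is, under the Poincar\'e duality isomorphism $-\bullet\theta(\pi_W)\colon\Omega^*(W)\xrightarrow{\sim}\Omega_*(W)$ of Proposition \fref{PoincareDuality}, multiplication by the cohomology class corresponding to $c_i(F)\cap 1_W$; and this class is by definition the bivariant Chern class $c_i(F)\in\Omega^i(W)$ of Section \fref{ChernClassSect}. Hence $c_i(F)\cap\beta = c_i(F)\bullet\beta$ for every $\beta\in\Omega_*(W)$. (Alternatively, one first reduces $F$ to a globally generated bundle via the twisting formulas $c_i(F\otimes\Li) = H^i(c_1(F),\dots,c_r(F),c_1(\Li))$, which hold on both sides, writes $F = f^*\mathcal{E}$ for a classifying map $f\colon W\to\Gr(r,n)$, and, using commutativity of $\Omega^*$ and the compatibility of the Levine--Morel Chern operators with the l.c.i.\ pullback $f^!$, computes $c_i(F)\bullet 1_W = f^!(c_i(\mathcal{E})\bullet 1_{\Gr}) = f^!(c_i(\mathcal{E})\cap 1_{\Gr}) = c_i(F)\cap 1_W$; this simultaneously reconciles the two constructions of $c_i$ on smooth schemes.)

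The main obstacle is the smooth case: one needs to know that on a smooth scheme the Levine--Morel Chern operators act, via Poincar\'e duality, as multiplication by Chern classes (standard, but not entirely formal --- it is part of the content of \cite{LM}), and that the Grassmannian--twist construction of $c_i$ from Section \fref{ChernClassSect}, restricted to smooth schemes, reproduces the Poincar\'e dual of the Levine--Morel class. Granting this, the reduction above is routine. The remaining verifications --- that the bivariant operation commutes with proper pushforward, via the push--pull axiom and the functoriality of the bivariant pullback, and the elementary manipulations with the twisting formulas --- are bookkeeping.
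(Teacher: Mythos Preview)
Your reduction to smooth schemes is correct and complete: the projection formula on the Levine--Morel side and the push--pull axiom plus Theorem~\fref{ChernClasses} on the bivariant side let you push everything down to smooth $W$, and on smooth $W$ the bivariant Chern class is \emph{defined} in Section~\fref{ChernClassSect} as the Poincar\'e dual of $c_i(F)\cap 1_W$; unpacking Proposition~\fref{PoincareDuality} together with the fact that the Levine--Morel Chern operators commute with $\Delta^!$ and with external products gives $c_i(F)\cap\beta=(c_i(F)\cap 1_W)\frown\beta=c_i(F)\bullet\beta$. The only thing you are silently using is that the bivariant Gysin pullback $\Delta^!$ agrees with the Levine--Morel l.c.i.\ pullback under the identification of Proposition~\fref{InducedHomologyIsBordism} and Theorem~\fref{AlgebraicSpivak}, but that is part of the setup.

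The paper takes a different route. Rather than reducing to smooth schemes, it applies the splitting principle directly on $X$: choose a tower of projective bundles $\pi\colon P\to X$ so that $\pi^*E$ has a filtration with line bundle quotients and $\pi_*$ is surjective on bordism, then use push--pull and the Whitney sum formula to reduce to first Chern classes of line bundles on $P$. For these it proves a purely bivariant lemma, valid in any $\mathcal{C}$-independent oriented bivariant theory: if $s$ is both confined and specialized then $s^!(s_*(\alpha))=s^*(s_!(1_X))\bullet\alpha$. Combined with Remark~\fref{AmbiguousFirstChernClass} and the normalization $e(\Li)=c_1(\Li)$, this immediately gives $c_1(\Li)\cap\alpha=c_1(\Li)\bullet\alpha$. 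Your approach leans more heavily on the internal machinery of \cite{LM} (compatibility of Chern operators with l.c.i.\ pullbacks and external products, and the comparison of the two Poincar\'e dualities), while the paper's approach isolates a clean formal identity in the bivariant formalism and uses only the $c_1$ case as the bridge between the two theories.
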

\begin{proof}
We know that there is a tower of projective bundles $\pi: P \to \cdots \to X$ such that the pullback $\pi^* E$ of $E$ has a filtration by vector bundles with line bundle quotients $\Li_1,...,\Li_r$. Moreover the pushforward $\pi_*: \Omega_*(P) \to \Omega_*(X)$ is surjective, and hence we can find $\widetilde \alpha$ mapping to $\alpha$. Using the push pull formula $c_i(E) \bullet \alpha = \pi_*(c_i(\pi^* E) \bullet \widetilde \alpha)$ following from the axioms of a bivariant theory, and using the Whitney sum formula of the previous proposition, we need only to check that the first Chern classes of line bundles agree. This is taken care of by the following general lemma.
\end{proof}

\begin{lem}
Let $\B^*$ be a bivariant theory which is $\mathcal{C}$-independent in the sense of \cite{Yo1} (meaning that all (homotopy) Cartesian squares obtained by pulling back a confined morphism are independent) with an orientation $\theta$,  and suppose $s: X \to Y$ is both confined and specialized. Suppose $\alpha \in \B_*(X)$. Then
$$s^!(s_*(\alpha)) = s^*(s_!(1_X)) \bullet \alpha \in \B_(X).$$
\end{lem}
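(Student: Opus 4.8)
The plan is to unravel both sides using only the bivariant axioms, the hypothesis that $\B^*$ is $\mathcal{C}$-independent, and the compatibility of the orientation with pullbacks. First I would set up the Cartesian square obtained by pulling back the confined morphism $s : X \to Y$ along itself, giving the homotopy fibre product $X \times_Y X$ with its two projections $p_1, p_2$ and the diagonal section $\delta : X \to X \times_Y X$; since $s$ is confined, this square is independent by the $\mathcal{C}$-independence assumption, so the bivariant pullback $s^*$ along it is defined and the push-pull formula $(A_{123})$ applies.

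The key computation is to rewrite $s^!(s_*(\alpha)) = \theta(s) \bullet s_*(\alpha)$. Using the push-pull axiom $(A_{123})$ on the square pulling back $s$ along $s$, one gets $\theta(s) \bullet s_*(\alpha) = (p_1)_*\bigl(s^*(\theta(s)) \bullet \alpha'\bigr)$ where $\alpha'$ is the image of $\alpha$ under the relevant pullback; but here one must be careful about which copy of $X$ the class $\alpha$ lives over. The cleaner route is to observe that because $s$ is \emph{both} confined and specialized, the diagonal $\delta$ splits the projection $p_2$, i.e. $p_2 \circ \delta = 1_X$, and to use this to collapse the fibre product back down. Concretely: $s^! s_* (\alpha) = \theta(s)\bullet s_*(\alpha)$, and by $(A_{123})$ this equals $s_*\bigl( s^*(\theta(s)) \bullet \delta^!(\text{something})\bigr)$ — the point is that the pullback of $\theta(s)$ along $s$, combined with the diagonal, produces exactly the Gysin pushforward class $s_!(1_X) = s_*(\theta(s))$ pulled back to $X$, which is $s^*(s_!(1_X))$. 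Meanwhile the remaining factor is forced to be $\alpha$ by unitality of $1_X$ and functoriality of pullback.

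Here is the order I would carry this out: (1) write $s^!(s_*\alpha) = \theta(s) \bullet s_*(\alpha)$ by definition of the Gysin pullback; (2) apply $(A_{123})$ to the independent square pulling back $s$ along $s$, moving the pushforward outside and introducing a pullback $s^*$ of the first factor; (3) identify the pulled-back square's section coming from the diagonal, and use $p_2 \circ \delta = 1_X$ together with functoriality of pullbacks to reduce the fibre-product class back to $X$; (4) recognize the resulting coefficient of $\alpha$ as $s^*(s_*(\theta(s))) = s^*(s_!(1_X))$, using that $s_!(1_X) = s_*(1_X \bullet \theta(s)) = s_*(\theta(s))$ by definition of the Gysin pushforward and unitality; (5) use $(A_{12})$ or $(A_{13})$ as needed to commute this scalar past $\alpha$ and conclude.

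The main obstacle I expect is step (3): keeping straight \emph{which} of the two copies of $X$ in $X \times_Y X$ each class is supported on, and verifying that the diagonal section genuinely lets one contract the fibre product without spurious correction terms. This is exactly the kind of bookkeeping where the non-symmetry of independent squares (emphasized earlier in the paper) could in principle bite; in practice, since all squares here are honest homotopy Cartesian squares and hence symmetric, the diagonal is specialized (being a section of a pullback of the specialized morphism $s$, via the argument used for $\op\ch^*$ with smooth maps), and the contraction goes through. A secondary subtlety is making sure the orientation's stability under pullback (niceness is \emph{not} assumed here, only an orientation $\theta$) is actually used correctly — but in fact one only needs $\theta(s)$ to pull back along the \emph{specific} square pulling back $s$ along $s$ in the sense encoded by the push-pull axiom, not full niceness, so no extra hypothesis is required.
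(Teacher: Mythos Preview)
Your plan has the right skeleton but takes an unnecessary detour through the diagonal $\delta: X \to X\times_Y X$, and the ``cleaner route'' paragraph is confused about what $(A_{123})$ actually produces. After step~(2), the push-pull axiom applied to the square
\[
\begin{tikzpicture}[scale=1.6]
\node (A1) at (0,1) {$W$};
\node (B1) at (1,1) {$X$};
\node (A2) at (0,0) {$X$};
\node (B2) at (1,0) {$Y$};
\path[every node/.style={font=\sffamily\small}]
(A1) edge[->] (B1)
(A2) edge[->] node[below]{$s$} (B2)
(A1) edge[->] node[left]{$s'$} (A2)
(B1) edge[->] node[right]{$s$} (B2);
\end{tikzpicture}
\]
(with $W = X\times_Y X$) gives exactly
\[
\theta(s)\bullet s_*(\alpha) \;=\; s'_*\bigl(s^*(\theta(s))\bullet \alpha\bigr),
\]
where $\alpha$ is \emph{not} pulled back to $W$: it stays as the class in $\B^*(X\to pt)$, and $s^*(\theta(s))\in\B^*(W\to X)$ is multiplied into it on the left. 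There is no $\delta^!(\text{something})$ here, and no need to ``collapse the fibre product'' via the diagonal.

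The paper's proof now finishes in two clean axiom applications that you have obscured. First, $(A_{12})$ (pushforward commutes with right product) pulls $\alpha$ out:
\[
s'_*\bigl(s^*(\theta(s))\bullet \alpha\bigr) \;=\; s'_*\bigl(s^*(\theta(s))\bigr)\bullet \alpha.
\]
Second, $(A_{23})$ (base change) applied to the same square, now viewed with the bottom map $s$ as the pullback direction and the right map $s$ as the confined map being pushed, swaps the order:
\[
s'_*\bigl(s^*(\theta(s))\bigr) \;=\; s^*\bigl(s_*(\theta(s))\bigr) \;=\; s^*\bigl(s_!(1_X)\bigr).
\]
Your step~(3) is attempting to replace this direct use of $(A_{23})$ by an argument involving the section $\delta$ of the projection, but you never explain how $\delta$ converts $s'_* s^*$ into $s^* s_*$, and indeed it does not without essentially re-invoking base change. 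Drop the diagonal entirely; the three axioms $(A_{123})$, $(A_{12})$, $(A_{23})$ in that order are all that is needed, and niceness of the orientation is (as you correctly note) never used.
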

\begin{proof}
Let us record here for later use the diagrams
\begin{center}
\begin{tikzpicture}[scale=2]
\node (A2) at (1,2) {$X$};
\node (B2) at (1,1) {$Y$};
\node (A1) at (0,2) {$W$};
\node (B1) at (0,1) {$X$};
\node (B3) at (2,1) {$pt$};
\path[every node/.style={font=\sffamily\small}]
(A1) edge[->] (A2)
(B1) edge[->] node[above]{$s$} (B2)
(A1) edge[->] node[left]{$s'$} (B1)
(A2) edge[->] node[left]{$s$} node[xshift=0.5cm,,circle,draw,inner sep=0pt, minimum size=0.5cm]{$\theta(s)$} (B2)
(B2) edge[->] (B3)
(B1) edge[->,bend right] node[yshift=-0.4cm,circle,draw,inner sep=0pt, minimum size=0.5cm]{$\alpha$} (B3)
;
\end{tikzpicture}
\end{center}
and
\begin{center}
\begin{tikzpicture}[scale=2]
\node (A2) at (1,2) {$X$};
\node (B2) at (1,1) {$Y$};
\node (C2) at (1,0) {$Y$};
\node (A1) at (0,2) {$W$};
\node (B1) at (0,1) {$X$};
\node (C1) at (0,0) {$X$};
\path[every node/.style={font=\sffamily\small}]
(A1) edge[->] (A2)
(B1) edge[->] node[above]{$s$} (B2)
(C1) edge[->] node[above]{$s$} (C2)
(A1) edge[->] node[left]{$s'$} (B1)
(B1) edge[->] node[left]{$\mathrm{Id_X}$} (C1)
(A2) edge[->] node[left]{$s$} (B2)
(B2) edge[->] node[left]{$\mathrm{Id_Y}$} (C2)
(A2) edge[->,bend left] node[xshift=0.5cm,circle,draw,inner sep=0pt, minimum size=0.5cm]{$\theta(s)$} (C2)
;
\end{tikzpicture}
\end{center}
where all squares are (homotopy) Cartesian. We can now compute that
\begin{align*}
s^!(s_*(\alpha)) &= \theta(s) \bullet s_*(\alpha) \\
&= s'_*\bigl(s^*(\theta(s)) \bullet \alpha \bigr) \quad (\text{$(A_{123})$ applied to the first diagram}) \\
&= s'_*\bigl(s^*(\theta(s)) \bigr) \bullet \alpha \quad (A_{13}) \\
&= s^* \bigl( s_* (\theta(s))\bigr) \bullet \alpha \quad (\text{$(A_{23})$ applied to the second diagram}) \\
&= s^* \bigl( s_* (1_X \bullet \theta(s))\bigr) \bullet \alpha \\
&= s^* \bigl( s_!(1_X) \bigr) \bullet \alpha,
\end{align*}
which proves the claim. 
\end{proof}

For general Borel-Moore bivariant theories, the results of this subsection can be summarized as follows:

\begin{cor}
Let $\B^*$ be a Borel-Moore bivariant theory in the sense of Definition \fref{BorelMooreBivariant}. Now the images of the Chern classes of $\Omega^*$ in the canonical map $\Omega^* \to \B^*$ give rise to contravariantly natural Chern classes in $\B^*$ satisfying normalization and Whitney sum formula. Moreover if $\Li$ and $\Li'$ are line bundles over $X$, then $c_1(\Li \otimes \Li') = F_{\B^*}(c_1(\Li), c_1(\Li'))$, where $F_{\B^*}$ is the formal group law of the oriented Borel-Moore homology theory $\B_*$. The action of these classes on the homology groups gives back the previously known Chern class operations.
\end{cor}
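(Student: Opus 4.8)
The plan is to deduce everything formally from the universal property of $\Omega^*$. Since $\B^*$ is a Borel-Moore bivariant theory, the universal property established above furnishes a unique Grothendieck transformation $\omega \colon \Omega^* \to \B^*$ respecting orientations, whose restriction to homology is the universal morphism $d\Omega_* \to \B_*$; recall in addition (Theorem \fref{UniversalOBM}) that the induced map $\Laz = \Omega_*(pt) \to \B_*(pt)$ classifies the formal group law $F_{\B^*}$. I would then simply \emph{define} the Chern classes in $\B^*$ by $c_i(E) := \omega_{X \to X}\bigl(c_i(E)\bigr) \in \B^i(X)$, where on the right $c_i(E) \in \Omega^i(X)$ is the class constructed in this section. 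Since $\omega$ commutes with the three bivariant operations and preserves units, it restricts to a ring homomorphism $\Omega^*(X) \to \B^*(X)$ on cohomology, and it intertwines the Gysin maps $f^!$, $f_!$ attached to quasi-smooth $f$, since these are built from proper pushforward and bivariant multiplication by $\theta(f)$.

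With this in place, properties i., ii. and iv. of Theorem \fref{ChernClasses} transport verbatim: contravariant functoriality follows from $\omega$ commuting with bivariant pullback; the Whitney sum formula follows because $\omega$ is multiplicative on cohomology; and for the formal group law one applies $\omega$ to the identity $c_1(\Li \otimes \Li') = F\bigl(c_1(\Li), c_1(\Li')\bigr)$ in $\Omega^*(X)$, using that $\omega$ carries the coefficients of $F$ (which lie in $\Laz = \Omega_*(pt)$) to the coefficients of $F_{\B^*}$, to obtain $c_1(\Li \otimes \Li') = F_{\B^*}\bigl(c_1(\Li), c_1(\Li')\bigr)$ in $\B^*(X)$. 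For normalization, the Euler class $e(E) = s^* s_!(1_X)$, with $s$ the zero section of $E$, is assembled from the bivariant operations and the orientation $\theta(s)$ of the quasi-smooth closed immersion $s$, all of which $\omega$ respects, so $\omega\bigl(e^{\Omega}(E)\bigr) = e^{\B}(E)$; combined with $e^{\Omega}(E) = c_{\mathrm{top}}^{\Omega}(E)$ from Theorem \fref{ChernClasses} iii., this gives $e^{\B}(E) = c_{\mathrm{top}}^{\B}(E)$.

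It then remains to identify the action of $c_i(E) \in \B^i(X)$ by bivariant product on $\B_*(X) = \B^*(X \to pt)$ with the Chern class operator $c_i(E) \cap$ of the oriented Borel-Moore homology $\B_*$, the latter constructed from the projective bundle formula $(PB)$ as in \cite{LM} Section 4.1. Both operators are contravariantly natural, so by the splitting principle it suffices to treat line bundles: choosing a tower of projective bundles $\pi \colon P \to \cdots \to X$ along which $\pi^* E$ acquires a filtration with line bundle quotients and $\pi_* \colon \B_*(P) \to \B_*(X)$ is surjective (this uses $(PB)$), the bivariant push-pull formula $A_{123}$ together with the Whitney formula already established reduces the general case to line bundles exactly as in the preceding Proposition. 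For a line bundle $\Li$ with zero section $s$, one has $c_1(\Li) \cap \alpha = s^! s_* \alpha$ by definition, and the preceding Lemma, applied to $\B^*$ with $s$ both confined and specialized, gives $s^! s_* \alpha = s^*\bigl(s_!(1_X)\bigr) \bullet \alpha = e^{\B}(\Li) \bullet \alpha = c_1^{\B}(\Li) \bullet \alpha$, the last equality being the normalization just proved.

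The only genuinely non-formal point is the last paragraph: one must know that the intrinsic Chern class operators on a general oriented Borel-Moore homology $\B_*$ obey the same splitting-principle reduction as in \cite{LM}, and that the bivariant push-pull axiom $A_{123}$ matches the homological projection formula used there. Everything else is bookkeeping with the Grothendieck transformation $\omega$.
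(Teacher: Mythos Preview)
Your argument is correct and, for the bulk of the corollary (naturality, Whitney sum, normalization, formal group law), matches the paper's exactly: transport the identities of Theorem \fref{ChernClasses} along the Grothendieck transformation $\omega$, using that $\omega$ respects pullbacks, products, orientations, and sends the coefficients of $F$ to those of $F_{\B^*}$.

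The only divergence is in the final claim. The paper dispatches it in one line, saying it ``follows from the fact that the action of the Chern class operators commutes with transformations of oriented Borel-Moore homology theories.'' Read literally, that argument gives $c_i^{\B}(E)\bullet\omega(\alpha)=c_i(E)\cap\omega(\alpha)$ only for $\alpha$ in the image of $d\Omega_*\to\B_*$, which need not be all of $\B_*(X)$. You instead re-run the proof of the preceding Proposition directly in $\B^*$: use $(PB)$ to get a projective-bundle tower with $\pi_*$ surjective, then the bivariant push-pull $A_{123}$ and the Whitney formula (for both the cohomological classes and the LM operators) to reduce to line bundles, and finish with the Lemma identifying $s^!s_*$ with multiplication by the Euler class. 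This is the honest way to make the paper's sketch precise, and it costs nothing extra since every ingredient is already available in any Borel-Moore bivariant theory. Your self-flagged ``non-formal point'' is exactly right: one needs that the LM Chern operators on $\B_*$ themselves satisfy Whitney and the projection formula, which is standard from \cite{LM} Section 4.1.
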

\begin{proof}
The claims about naturality, normalization and Whitney sum formulas are trivial. The formal group law follows from the fact that the ring map $\Omega^*(pt) \to \B^*(pt)$ is the one classifying $F_{\B^*}$ as there are literal identities $\B^*(pt) = \B_{-*}(pt)$ for all Borel-Moore bivariant theories. The final claim follows from the fact that the action of the Chern class operators commutes with transformations of oriented Borel-Moore homology theories.
\end{proof}

\section{Relations to other bivariant theories}

Let us denote by $K^0$ and $\op d \ch^*$ the bivariant algebraic $K$-theory, and the bivariant operational chow theory, extended to quasi-projective derived schemes in \cite{LS}. Both theories have proper morphisms as confined maps, quasi-smooth morphisms as specialized maps, and all homotopy Cartesian squares as independent squares. Although it is not noted in the paper, both orientations are stable under pullback. If $X \to Y$ is quasi-smooth (or more generally, perfect), then the corresponding orientation in $K^0(X \to Y)$ is $[\OO_X]$, which in a homotopy Cartesian square
\begin{center}
\begin{tikzpicture}[scale=2]
\node (A2) at (1,2) {$Y'$};
\node (B2) at (1,1) {$Y$};
\node (A1) at (0,2) {$X'$};
\node (B1) at (0,1) {$X$};
\path[every node/.style={font=\sffamily\small}]
(A1) edge[->] (A2)
(B1) edge[->] (B2)
(A1) edge[->] (B1)
(A2) edge[->] (B2)
;
\end{tikzpicture}
\end{center}
pulls back to $[\OO_{X'}]$ -- the orientation of $X' \to Y'$.

On the other hand, the orientation in $\op d \ch^*(X \to Y)$ is the collection of maps defined by the virtual pullback corresponding to the quasi-smooth morphism $X \to Y$ as constructed in \cite{Man}. Namely, in case of quasi-smooth embedding, we have a closed immersion $i: C_{\tau_0(X)}\tau_0(Y) \hookrightarrow \tau_0 (L_{X/Y}[1])$, where the right hand side is a vector bundle on $\tau_0 (X)$. Using the deformation to the normal cone construction, we do have a specialization map $d\ch_*(\tau_0(Y)) \to d \ch_*(C_{\tau_0(X)}\tau_0(Y))$. Composing the above specialization map with
\begin{equation*}
s^! i_*: d \ch_*(C_{\tau_0(X)}\tau_0(Y)) \to d \ch_*(\tau_0 (L_{X/Y}[1])) \to d\ch_*(\tau_0(X)) 
\end{equation*}
where the latter map $s$ is the zero-section, we obtain the \emph{virtual pullback}. As the cotangent complex is stable under homotopy pullbacks, we obtain in a natural way a \emph{refined virtual pullback} map for any homotopy Cartesian square as above, and this gives an operational class (\emph{loc. cit.}). For the same reasons it follows that the orientation is stable under bivariant pullbacks. These virtual pullbacks could also be constructed more directly using derived deformation to the normal cone, which we are going to use in the following subsection.

Now that we know that $\op d\ch^*$ and $K^0$ are Borel-Moore bivariant theories, we obtain canonical Grothendieck transformations $\Omega^* \to K^0$ and $\Omega^* \to \op d \ch^*$ preserving orientations. As it is often customary, instead of working with the algebraic $K$-theory \emph{per se}, we rather work with the \emph{Bott-periodic $K$-theory} $K^0 [\beta, \beta^{-1}]$, where $\beta$ is an element of cohomological degree $-1$. This is obtained from the bivariant theory $K^0$ by first tensoring it with $\Z[\beta, \beta^{-1}]$ over $\Z$, and then replacing the old orientations $\theta (f)$ with $\beta^d \theta(f)$, where $d$ is the relative virtual dimension of the quasi-smooth morphism $f$. The formal group law of this newly obtained theory is given by $F_{ K^0[\beta,\beta^{-1}]}(x,y)=x+y-\beta xy$; it is the universal multiplicative and periodic formal group law. The orientation is clearly stable under pullbacks, and hence we end up with another map $\Omega^* \to K^0[\beta,\beta^{-1}]$ of bivariant theories.

\subsection{The Cohomology Rings $\Omega^*$ and $K^0$.}\label{ConnerFloydSect}

The goal of this subsection is to show that the Bott periodic $K$-theory (and hence also the algebraic $K$-theory) cohomology ring is obtained from the algebraic cobordism by a simple change of coefficients. By the results of the previous section, the map
\begin{equation*}
E \mapsto r(E) + a_{11} c_1(E^\vee) \in \Omega^*(X),
\end{equation*}
where $r(E)$ denotes the rank of $E$, is additive in exact sequences of vector bundles, and hence this descends to a map $K^0(X) \to \Omega^*(X)$ (see Proposition \fref{KTheory}). Here $a_{11} \in \Laz$ is the coefficient of the term $xy$ in the universal formal group law. By tensoring, we obtain a \emph{Chern character} map 
\begin{equation*}
ch_\beta: K^0[\beta,\beta^{-1}](X) \to \Omega_{\beta,\beta^{-1}}^*(X) := \Omega^*(X) \otimes_\Laz \Z[\beta, \beta^{-1}]
\end{equation*}
where, as specified earlier, $\Z[\beta, \beta^{-1}]$ is to be regarded as an $\Laz$-algebra via the map classifying the universal multiplicative and periodic formal group law ($a_{11}$ is sent to $-\beta$, and all other variables are sent to 0). It is trivial that this map behaves well with respect to pullbacks. We claim that it also commutes with products and Gysin pushforwards. The product is easier of the two: namely,
\begin{align*}
r(E \otimes F) - \beta c_1(E^\vee \otimes F^\vee) &=  r(E) r(F) - r(F) \beta c_1(E^\vee) - r(E) \beta c_1(F^\vee) + \beta^2 c_1(E^\vee)c_1(F^\vee) \\
&= (r(E) - \beta c_1(E^\vee)) (r(F) - \beta c_1(F^\vee)),
\end{align*}
and therefore $ch(\alpha \bullet \beta) = ch(\alpha) \bullet ch(\beta)$. Recall that the ring structure on $K^0(X)$ induced by the bivariant product is the same as the usual one given by the derived tensor product, and the ring $K^0[\beta, \beta^{-1}](X)$ is just the ring of Laurent polynomials over $K^0(X)$. Before taking care of the pushforward, we need a lemma.

\begin{lem}
The Chern character map $ch: K^0[\beta,\beta^{-1}](X) \to \Omega_{\beta, \beta^{-1}}(X)$ preserves Chern classes.
\end{lem}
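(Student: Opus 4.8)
The plan is to verify the identity on generators, reduce to line bundles, and then invoke the defining formula of the Chern character together with the formal group law computations already carried out in this section. First I would recall that, by the description of $K^0(X)$ as the Grothendieck group of vector bundles (Proposition \fref{KTheory}), every class in $K^0[\beta,\beta^{-1}](X)$ is a $\Z[\beta,\beta^{-1}]$-linear combination of classes of vector bundles $[E]$, and that the Chern classes on both sides are polynomial in the Chern roots via the splitting principle for globally generated bundles established earlier. Hence it suffices to check that $c_i(ch(E)) = c_i(E)$ for a vector bundle $E$, and by the Whitney sum formula (Theorem \fref{ChernClasses} (ii)) and the usual splitting argument over a smooth tower of projective bundles (as in the proof of the preceding Proposition), it suffices to treat the case of a line bundle $\Li$.

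For a line bundle $\Li$, the Chern character is $ch_\beta(\Li) = r(\Li) + a_{11}c_1(\Li^\vee) = 1 - \beta\, c_1(\Li^\vee)$ in $\Omega^*_{\beta,\beta^{-1}}(X)$, a unit of cohomological degree $0$. The key point is that in $K^0[\beta,\beta^{-1}]$ the Chern classes are computed through the multiplicative periodic formal group law $F(x,y) = x+y-\beta xy$, so the first Chern class of the "line bundle-like" unit $1-\beta c_1(\Li^\vee)$ should be recovered by inverting the operation $\Li \mapsto 1 - \beta c_1(\Li^\vee)$; concretely I would use that $c_1$ in $K$-theory of a line bundle $\M$ is $\beta^{-1}(1 - [\M^\vee])$, so applying this to the class corresponding to $ch_\beta(\Li)$ gives $\beta^{-1}\bigl(1 - (1 - \beta c_1(\Li))\bigr) = c_1(\Li)$, using the formal-group-law compatibility $c_1(\Li\otimes\Li') = F(c_1(\Li),c_1(\Li'))$ from Theorem \fref{ChernClasses} (iv) and its image in $\Omega^*_{\beta,\beta^{-1}}$. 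Here one must be careful that duals and the sign of $\beta$ are tracked correctly: $c_1(\Li^\vee) = \iota(c_1(\Li))$ where $\iota$ is the formal inverse for $F$, and the computation that $ch_\beta$ intertwines the $K$-theoretic and the $\Omega$-theoretic first Chern class is exactly the statement that $\beta^{-1}(1-e^{-t})$ is inverse to the substitution coming from the multiplicative formal group law.

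Having dealt with line bundles, I would assemble the general case: choose a globally generated line bundle $\Li_0$ with $E\otimes\Li_0$ globally generated, pull everything back along a tower $\pi\colon P \to X$ of projective bundles splitting $\pi^*(E\otimes\Li_0)$ into line bundles (using that $\pi_*$ is surjective and $\pi^*$ injective on the relevant groups for smooth-over-$X$ towers, exactly as in the proof of the previous Proposition), apply the line bundle case to each Chern root, multiply using the Whitney sum formula which $ch_\beta$ respects by the product computation preceding this lemma, and untwist by $\Li_0$ using the twisting formulas $H^i$, $H^i_-$ set up in Section \fref{ChernClassSect}; since $ch_\beta$ is a ring map commuting with pullback, these twisting identities transport verbatim. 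I expect the main obstacle to be purely bookkeeping: matching the combinatorial identities among $F$, $F_-$, $H^i$, $H^i_-$ under the specialization $\Laz \to \Z[\beta,\beta^{-1}]$ and keeping the signs and duals consistent, rather than anything structural — the structural content is entirely contained in Theorem \fref{ChernClasses} and the already-verified multiplicativity of $ch_\beta$.
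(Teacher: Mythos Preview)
Your line-bundle computation is essentially the paper's: the first Chern class in $K^0[\beta,\beta^{-1}]$ of a line bundle $\Li$ is $\beta^{-1}(1-[\Li^\vee])$, and applying $ch$ gives $\beta^{-1}\bigl(1-(1-\beta c_1(\Li))\bigr)=c_1(\Li)$. (A notational slip: you write ``$c_i(ch(E))=c_i(E)$'', but what is meant and what you actually argue is $ch(c_i^{K}(E))=c_i^{\Omega}(E)$; there is no Chern class of the element $ch(E)\in\Omega^*_{\beta,\beta^{-1}}(X)$.)

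The genuine gap is in your reduction step. You propose to split $E$ by pulling back along a tower of projective bundles $\pi\colon P\to X$ and then descend using injectivity of $\pi^*$ on $\Omega^*_{\beta,\beta^{-1}}$. But the paper explicitly flags that the projective bundle formula for $\Omega^*(X)$ is \emph{not known} when $X$ is singular (this is precisely why the Chern classes had to be constructed via Grassmannians in Section~\ref{ChernClassSect} rather than via Grothendieck's splitting approach), and without it you have no reason for $\pi^*$ to be injective. The ``previous Proposition'' you invoke uses surjectivity of $\pi_*$ on \emph{homology}, which is a different statement and does not give what you need here. The paper sidesteps this entirely: since every vector bundle on $X$ is pulled back from a smooth scheme (a Grassmannian, after twisting), and since $ch$ commutes with pullback and with products, it suffices to verify the identity on smooth schemes, where the splitting principle \emph{is} available via Levine--Morel. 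That one extra pullback to a smooth target is the missing idea.
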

\begin{proof}
The first Chern class of a line bundle $\Li$ is given by $\beta^{-1}(1 - [\Li^\vee])$ in $K^0[\beta, \beta^{-1}]$, and its image in the Chern character map is
\begin{align*}
ch(\beta^{-1}(1 - [\Li^\vee])) &= \beta^{-1} (ch(1) - ch(\Li^\vee)) \\
&= \beta^{-1}(1 - (1 - \beta c_1(\Li))) \\
&= c_1(\Li).
\end{align*}
As $ch$ commutes with pullbacks, and as all vector bundles can be pulled back from a smooth scheme on which we can utilize the splitting principle, we see using the fact that $ch$ preserves multiplication, that the total Chern class of any vector bundle is preserved, which proves the claim.
\end{proof}

We are now going to prove that $ch$ commutes with Gysin pushforwards. The proof follows the classical approaches for proving Riemann-Roch type results using deformation to the normal bundle. We first prove the following toy case.

\begin{lem}\label{ToyModel1}
Let $E$ be a vector bundle on $X$. Now for the section $s: X \to \Proj := \Proj(1 \oplus E)$, the Gysin pushforward $s_!$ commutes with the Chern character map $ch$.
\end{lem}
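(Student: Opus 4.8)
The plan is to reduce the assertion to one universal identity and then to compute both sides explicitly.

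Write $\pi\colon\Proj\to X$ for the structure map of the projective completion, so that $\pi\circ s=\mathrm{id}_X$. Then every $\alpha\in K^0[\beta,\beta^{-1}](X)$ equals $s^*(\pi^*\alpha)$, and the projection formula (a formal consequence of the push-pull axiom $(A_{123})$) gives $s_!(\alpha)=\pi^*\alpha\bullet s_!(1_X)$, with the identical formula on the $\Omega^*_{\beta,\beta^{-1}}$-side. Since $ch$ is already known to commute with bivariant products and with pullbacks and to carry $1$ to $1$, it is enough to prove the single equality
$$ch\bigl(s_!(1_X)\bigr)=s_!(1_X)\ \in\ \Omega^*_{\beta,\beta^{-1}}(\Proj),$$
where the left-hand $s_!$ is the Gysin pushforward in $K^0[\beta,\beta^{-1}]$ and the right-hand one is the Gysin pushforward in $\Omega^*_{\beta,\beta^{-1}}$.

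To identify these two classes, observe that $s$ exhibits $X$ as the zero locus of a regular section $\sigma$ of the rank $r$ bundle $\mathcal V:=\pi^*E\otimes\OO_{\Proj}(1)$ — the section induced by the tautological inclusion $\OO_{\Proj}(-1)\hookrightarrow\pi^*(\OO_X\oplus E)$ followed by the projection onto $\pi^*E$ — so that $s$ is a quasi-smooth closed immersion of relative virtual dimension $-r$ with conormal bundle $\mathcal V^\vee|_X$, realised as the homotopy pullback of the zero section $z\colon\Proj\to\mathcal V$ along $\sigma$. Since the orientation of a Borel-Moore bivariant theory (Definition \fref{BorelMooreBivariant}) is stable under pullbacks, $\theta(s)=\sigma^*\theta(z)$, and a standard bivariant computation — or, equivalently, the $\Proj^1$-degeneration used in the proof of the Whitney sum formula in Theorem \fref{ChernClasses} — identifies $s_!(1_X)$ with the Euler class of $\mathcal V$ in either theory. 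On the $\Omega^*_{\beta,\beta^{-1}}$-side this reads $s_!(1_X)=e(\mathcal V)=c_r(\mathcal V)$ by the normalization property of Theorem \fref{ChernClasses}; on the $K^0[\beta,\beta^{-1}]$-side the structure sheaf of the regular zero locus $s(X)$ is resolved by the Koszul complex $\Lambda^\bullet\mathcal V^\vee$, so that $s_*[\OO_X]=\sum_{i=0}^r(-1)^i[\Lambda^i\mathcal V^\vee]$ in $K^0(\Proj)$ and, after incorporating the Bott-periodic twist of the orientation of $s$, $s_!(1_X)=\beta^{-r}\sum_{i=0}^r(-1)^i[\Lambda^i\mathcal V^\vee]$.

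It remains to apply $ch$ to this last class. As $\mathcal V$ pulls back from a scheme on which it splits and every operation involved is natural, I may pass to a splitting tower and write $x_1,\dots,x_r$ for the Chern roots of $\mathcal V$, with formal line summands $L_1,\dots,L_r$; using that $ch$ is a ring homomorphism and the identity $ch([L^\vee])=1-\beta\,c_1(L)$ established in the preceding lemma, I get
$$ch\Bigl(\sum_{i=0}^r(-1)^i[\Lambda^i\mathcal V^\vee]\Bigr)=\prod_{j=1}^r\bigl(1-ch([L_j^\vee])\bigr)=\prod_{j=1}^r\beta\,x_j=\beta^r c_r(\mathcal V).$$
Hence $ch(s_!(1_X))=\beta^{-r}\cdot\beta^r c_r(\mathcal V)=c_r(\mathcal V)=s_!(1_X)$ in $\Omega^*_{\beta,\beta^{-1}}(\Proj)$, as required.

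The step I expect to be the main obstacle is the middle one: pinning down, simultaneously in $K^0[\beta,\beta^{-1}]$ and in $\Omega^*_{\beta,\beta^{-1}}$, the derived and bivariant incarnation of the principle ''the zero locus of a regular section represents the Euler class of the bundle'', and in particular keeping the $\beta$-bookkeeping on the $K$-theoretic side exactly consistent with the relative virtual dimension of $s$. Once that identification is secure, the remainder is the routine multiplicative-formal-group-law manipulation displayed above.
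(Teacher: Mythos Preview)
Your proof is correct and follows essentially the same strategy as the paper: reduce via the projection formula and surjectivity of $s^*$ to the single identity $ch(s_!(1_X))=s_!(1_X)$, and then identify both sides with $c_r(\pi^*E\otimes\OO(1))$. The only difference is cosmetic. The paper observes that $s_!(1_X)=c_{\mathrm{top}}(\pi^*E\otimes\OO(1))$ holds in \emph{both} theories by normalization, and then invokes the immediately preceding lemma that $ch$ preserves Chern classes. You instead unpack the $K$-theoretic top Chern class as the Koszul alternating sum $\beta^{-r}\sum(-1)^i[\Lambda^i\mathcal V^\vee]$ and apply $ch$ directly via the splitting principle; this is exactly a re-derivation of the special case ``$ch$ preserves $c_r$'' that the paper's lemma already supplies. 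Both routes are equally valid, and your closing worry about the Euler-class identification and the $\beta$-bookkeeping is not a genuine obstacle: the former is precisely the normalization property of Theorem~\ref{ChernClasses} (pulled back from a smooth model), and the latter you have handled correctly.
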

\begin{proof}
We note that $X$ is cut out from $\Proj$ by a section to the vector bundle $\pi^* E \otimes \OO(1)$, where $\pi$ is the projection $\Proj \to X$, and therefore $s_!(1) = c_\mathrm{top}(\pi^* E \otimes \OO(1))$ in both theories. Hence, by the previous lemma, at least $ch(s_!(1_X)) = s_!(ch(1_X))$. But this is enough: in any bivariant theory $\B^*$ the ring map $s^*$ is surjective because $s$ is a section of $\pi$, and the Gysin pushforward map $s_!$ is $\B^*(\Proj)$-linear where the cohomology ring of $\Proj$ acts on the cohomology of $X$ via the pullback $s^*$ (see for example \cite{FM} Section 2.5).
\end{proof}

\begin{rem}\label{ToyModel2}
We note that in the previous lemma all that is actually needed is that $X$ is a quasi-smooth retract of $P$ that can be expressed as the derived vanishing locus of a section of a vector bundle $E$ on $P$ (and hence $[X \to P]$ corresponds to the top Chern class of $E$).
\end{rem}

\begin{lem}\label{ChernCharacterCommutesWithPushforward}
Let $f: X \to Y$ be a proper quasi-smooth map, and $\alpha \in K^0[\beta, \beta^{-1}](X)$. Now $ch$ commutes with the Gysin pushforward $f_!$:
\begin{equation*}
ch(f_!(\alpha)) = f_!(ch(\alpha)).
\end{equation*} 
\end{lem}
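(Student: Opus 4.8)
The plan is to reduce to two basic cases by factoring $f$, exactly as in the classical proof of Grothendieck--Riemann--Roch. Since $X$ is quasi-projective it admits a locally closed embedding $X\hookrightarrow\Proj^n$, and together with $f$ this gives $j:=(\iota,f)\colon X\to\Proj^n\times Y$, which is a locally closed embedding (it is the composite of the graph $X\to X\times Y$, closed since $Y$ is separated, with $\iota\times\mathrm{id}$) and is proper, being the first factor of $X\to\Proj^n\times Y\to Y$ with $f$ proper and $\Proj^n\times Y\to Y$ separated; hence $j$ is a closed embedding, and it is quasi-smooth because $\pi\colon\Proj^n\times Y\to Y$ is smooth and $f=\pi\circ j$ is quasi-smooth. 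By multiplicativity of orientations together with axiom $(A_{12})$ one gets $f_!=\pi_!\circ j_!$, so, since $ch$ is already known to commute with pullbacks and products and to preserve Chern classes, it suffices to prove that $ch$ commutes with $\pi_!$ and with $j_!$ separately.

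For a projective bundle projection $\pi\colon\Proj(\mathcal V)\to X$ of rank $r$ I would use the classical projective bundle formula in $K$-theory of vector bundles (valid over any quasi-projective base), which presents $K^0[\beta,\beta^{-1}](\Proj(\mathcal V))$ as a free $K^0[\beta,\beta^{-1}](X)$-module on $1,h,\dots,h^{r-1}$ with $h:=c_1(\OO(1))$; by the projection formula it then remains to check $ch(\pi_!(h^i))=\pi_!(ch(h^i))$. Now $\pi_!(h^i)$ is, by naturality and the (available) projective bundle formula over smooth schemes, the pullback along a classifying map $X\to\Gr$ of a universal polynomial in the Chern classes of $\mathcal V$ whose coefficients lie in the coefficient ring and depend only on the formal group law; since $ch$ preserves Chern classes and is $\Z[\beta,\beta^{-1}]$-linear, and since $K^0[\beta,\beta^{-1}]$ and $\Omega^*_{\beta,\beta^{-1}}$ carry the same (multiplicative, periodic) formal group law and coincide on a point — here one uses that $\Omega_*(pt)=\Laz$, so $\Omega^*_{\beta,\beta^{-1}}(pt)=\Z[\beta,\beta^{-1}]$, and that $ch$ is the rank map there — the two polynomials agree. (Equivalently one invokes the canonical Grothendieck transformation $\Omega^*_{\beta,\beta^{-1}}\to K^0[\beta,\beta^{-1}]$, which preserves Chern classes, commutes with Gysin pushforwards, and is the identity on a point.) The product projection $\Proj^n\times Y\to Y$ is the case $\mathcal V=\OO^{n+1}$.

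The remaining case, $j\colon X\hookrightarrow P$ a quasi-smooth closed embedding, is the heart of the matter, and I would attack it by derived deformation to the normal bundle: there is a quasi-smooth closed embedding $\widetilde j\colon X\times\Proj^1\hookrightarrow D$ over $\Proj^1$ whose fibre over $\infty$ is the zero section $s\colon X\hookrightarrow\bar N:=\Proj_X(\OO\oplus N)$ of the projective completion of the normal bundle $N=N_{X/P}$, and whose fibre over each point of $\A^1=\Proj^1\setminus\{\infty\}$ is $j$, with $D|_{\A^1}\simeq P\times\A^1$. The zero section $s$ is precisely of the kind covered by Lemma \ref{ToyModel1} and Remark \ref{ToyModel2} (indeed $\bar N=\Proj(1\oplus N)$ and $X$ is the derived vanishing locus of a section of $\pi_{\bar N}^*N\otimes\OO(1)$), so $ch$ commutes with $s_!$. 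Setting $\zeta:=ch\bigl(\widetilde j_!(\mathrm{pr}_X^*\alpha)\bigr)-\widetilde j_!\bigl(\mathrm{pr}_X^*ch(\alpha)\bigr)\in\Omega^*_{\beta,\beta^{-1}}(D)$, base change along the fibre inclusions shows that $\zeta$ restricts over $\infty$ to $ch(s_!\alpha)-s_!(ch\,\alpha)=0$, while over $\A^1$ it equals $\mathrm{pr}_P^*\bigl(ch(j_!\alpha)-j_!(ch\,\alpha)\bigr)$; so it remains to deduce that $\zeta$ vanishes over a general point $t\neq\infty$. Classically one would propagate the vanishing from the special to a general fibre using homotopy invariance of the (co)homology theory, but this fails for the cohomology rings $\Omega^*$; instead I would use the relation $[0]=[\infty]$ in $\Omega^*(\Proj^1)$ from the proof of Proposition \ref{BivariantHomotopyFibre}, pulled back to $D$ — so that pushing forward and restricting along the fibre over $0$ gives the same operator on $\Omega^*_{\beta,\beta^{-1}}(D)$ as doing so along the fibre over $\infty$ — together with the compatibility of $ch$ with pullbacks, to transport the identity to the general fibre and conclude $ch(j_!\alpha)=j_!(ch\,\alpha)$.

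I expect this last propagation step to be the main obstacle: replacing the classical appeal to homotopy invariance, which is unavailable in this setting, by the $\Proj^1$-relation $[0]=[\infty]$ and a careful analysis of which classes on the deformation space $D$ are detected by restriction to its fibres. Everything else — the factorization of $f$, the projective bundle bookkeeping, the matching of the two formal group laws on a point, and the toy case — is comparatively routine given the results already established in the paper.
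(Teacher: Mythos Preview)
Your factorization of $f$ and the treatment of the projection $\pi\colon\Proj^n\times Y\to Y$ are fine and essentially match the paper: both arguments use the projective bundle formula in $K$-theory to reduce to generators, and then reduce to the case $Y=pt$ (you phrase this via a classifying map, the paper via the bivariant exterior product), where Poincar\'e duality for smooth schemes finishes things.

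The gap is exactly where you flagged it, in the closed embedding case. Your propagation step does not go through as written. If $D$ denotes the proper deformation space (the derived blow-up of $\Proj^1\times P$ along the copy of $X$ over the special point), then the special fibre is \emph{not} just $\bar N=\Proj(\OO\oplus N)$: it is the sum $\bar N + \bl_X(P)$. If instead you take the open deformation space with special fibre $\bar N$, it is not proper over $Y$ and Gysin pushforwards are unavailable. Either way, the relation $[0]=[\infty]$ pulled back to $D$ only gives you $i_{0!}\circ i_0^{*}=i_{\infty!}\circ i_\infty^{*}$ as endomorphisms of $\Omega^*_{\beta,\beta^{-1}}(D)$ (by the projection formula), so from $i_\infty^{*}\zeta=0$ you can only conclude $i_{0!}\bigl(i_0^{*}\zeta\bigr)=0$. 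You have no injectivity of $i_{0!}$, so this does not yield $i_0^{*}\zeta=0$; the ``transport to the general fibre'' is exactly the missing step.

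The paper avoids this entirely by never trying to restrict a class $\zeta$ to the fibres. Instead it works on the blow-up $B:=\bl_{0\times X}(\Proj^1\times P)$ and pushes \emph{into} $B$: writing $F\colon\Proj^1\times X\hookrightarrow B$ for the strict transform, one has $i_\infty^{*}F_{*}\widetilde E\simeq j_{*}E$, so by the projection formula $i_{\infty!}\bigl(ch([j_{*}E])\bigr)=ch([F_{*}\widetilde E])\bullet i_{\infty!}(1)$. The divisor relation $i_{\infty!}(1)=\iota_{!}(1)+i_{0!}(1)-\beta\,\iota_{!}(1)\bullet i_{0!}(1)$ (this is precisely the $[0]=[\infty]$ relation refined to the two components of the special fibre) lets you replace $i_{\infty!}(1)$; the terms involving the strict transform $i_{0}$ vanish because the images of $F$ and $i_{0}$ are disjoint, leaving $ch([F_{*}\widetilde E])\bullet\iota_{!}(1)=\iota_{!}\bigl(ch([s_{*}E])\bigr)$, and now Lemma~\ref{ToyModel1} applies to $s$. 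Finally one applies $q_{!}$ for $q\colon B\to P$ (the blow-down followed by projection): since $q\circ i_{\infty}=\mathrm{id}_{P}$ and $q\circ\iota\circ s=j$, this collapses to $ch([j_{*}E])=j_{!}(ch(E))$. The point is that all comparisons happen in $\Omega^*_{\beta,\beta^{-1}}(B)$ via Gysin pushforwards, and one only ever pushes down by the retraction $q_{!}$; nothing hinges on recovering information from a restriction map.
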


As we are working with quasi-projective schemes, the map $f$ is projective. Using the functoriality of Gysin pushforward, we have two cases to consider: either $f$ is the natural projection $\pi_Y: \Proj^n \times Y \to Y$ or then $f$ is a quasi-smooth embedding $j: X \hookrightarrow Y$.

\begin{proof}[Proof of Lemma \fref{ChernCharacterCommutesWithPushforward} for the projection $\pi_Y: \Proj^n \times Y \to Y$] Let us denote by $\pi$ the structure morphism $\Proj^n \to pt$. As the projective bundle formula holds for the algebraic $K$-theory rings of derived schemes (which follows from Theorem 3.16 of \cite{KST} and the fact that 
$[\OO_{\Proj^{n-i} \times Y}] = ([\OO_{\Proj^n \times Y}] - [\OO_{\Proj^n \times Y}(-1)])^i \in K^0(\Proj^n \times Y)$), we may assume that we have an element of form 
$$[\OO_{\Proj^i}] \times \alpha \in K^0[\beta, \beta^{-1}](\Proj^n \times Y),$$
where $\alpha$ is an element of $K^0[\beta, \beta^{-1}](Y)$, $\Proj^i$ is linearly embedded in $\Proj^n$, and $\times$ is the bivariant exterior product (see \cite{FM} Part I Section 2.4). By the standard properties of the bivariant exterior product, we see that
\begin{align*}
\pi_{Y*} ([\OO_{\Proj^i}] \times \alpha) &= \pi_*([\OO_{\Proj^i}]) \times  \mathrm{Id}_{Y*}(\alpha) \\
&= \pi_*([\OO_{\Proj^i}]) \times \alpha
\end{align*}
so the question reduces to the case of $Y = pt$. But now we are dealing with only smooth varieties, and therefore the claim follows from results in algebraic bordism by Poincaré duality.
\end{proof}

In order to prove the remaining case, we will employ the strategy of the proof of Theorem 15.2 from \cite{Ful}. If $X \to Y$ is a quasi-smooth closed embedding, then we define the \emph{normal bundle} $\mathcal{N}_{X/Y}$ to be the vector bundle $(L_{X/Y}[1])^\vee$ (the cotangent complex of a quasi-smooth embedding is known to be the homological shift $E[-1]$ of a vector bundle $E$, which is called the \emph{conormal bundle}).

\begin{proof}[Proof of Lemma \fref{ChernCharacterCommutesWithPushforward} for quasi-smooth closed embedding] Let $j: X \hookrightarrow Y$ be a quasi-smooth closed embedding. Denote by $\bl_{0 \times X} (\Proj^1 \times Y)$ the derived blow up of $Y$ at $0 \times X$ (as constructed in \cite{Khan}). Consider now the following diagram:
\begin{center}
\begin{tikzpicture}[scale=2]
\node (X1) at (0,2) {$0 \times X$};
\node (X2) at (0,1) {$\Proj^1 \times X$};
\node (X3) at (0,0) {$\infty \times X$};
\node (E) at (1.35,2) {$\Proj(\OO_X \oplus \mathcal{N}_{X/Y})$};
\node (plus) at (2.2,2) {$+$};
\node (Y1) at (2.65,2) {$\bl_X (Y)$};
\node (Y2) at (2,1) {$\bl_{0 \times X} (\Proj^1 \times Y)$};
\node (Y3) at (2,0) {$Y$};
\node (end1) at (4,2) {$0$};
\node (end2) at (4,1) {$\Proj^1$};
\node (end3) at (4,0) {$\infty$};
\path[every node/.style={font=\sffamily\small}]
(X1) edge[->] node[above]{$s$} (E)
(X2) edge[->] node[above]{$F$} (Y2)
(X3) edge[->] node[above]{$j$} (Y3)
(Y1) edge[->] (end1)
(Y2) edge[->] node[above]{$\pi$} (end2)
(Y3) edge[->] (end3)
(X1) edge[->] (X2)
(X3) edge[->] (X2)
(Y1) edge[->] node[right]{$i_0$} (Y2)
(E) edge[->] node[right]{$\iota$} (Y2)
(Y3) edge[->] node[right]{$i_\infty$} (Y2)
(end1) edge[->] (end2)
(end3) edge[->] (end2)
;
\end{tikzpicture}
\end{center}
where $F$ is the strict transformation of $\Proj^1 \to X \to \Proj^1 \times Y$ (existence guaranteed by Lemma \fref{DerivedBlowUp}), $s$ is the map from $0 \times X$ to the exceptional divisor (by Lemma \fref{DerivedBlowUp} (c) the image of $F$ does not meet the image of $i_0$), $\iota$ and $i_0$ are the inclusions of the exceptional divisor and the strict transform of $0 \times Y \to \Proj^1 \times Y$ respectively, and their sum is naturally identified with the homotopy fibre of $\pi$ over $0$. Note also that $s$ is a section of the projective bundle $\Proj(\OO_X \oplus \mathcal{N}_{X/Y})$.

Note that it follows from the above diagram that for either of the two Borel-Moore bivariant theories we are interested in, we have
\begin{equation}\label{GRREq1}
i_{\infty !}(1) = \iota_!(1) + i_{0!}(1) - \beta \iota_!(1) \bullet i_{0!}(1) \in \B^*(\bl_{0 \times X} (\Proj^1 \times Y)),
\end{equation}
where we have denoted abusively by $1$ the fundamental classes of various derived schemes in the diagram. 

Let $E$ be a vector bundle on $X$, and denote by $\widetilde E$ the pullback of $E$ to $\Proj^1 \times X$. It follows from the push-pull formula applied to the bottom left square (which is homotopy Cartesian), that $i_\infty^* (F_* \widetilde E) \simeq j_*(E)$), and therefore
\begin{align*}
i_{\infty !}(ch([j_* E])) &= i_{\infty !}(i_\infty^*(ch([F_* \widetilde E]))) \\
&= ch([F_* \widetilde E]) \bullet i_{\infty!}(1_Y). \quad (\text{projection formula})
\end{align*}
It now follows from the equation (\fref{GRREq1}) above that 
\begin{align*}
ch([F_* \widetilde E]) \bullet i_{\infty!}(1_Y) &= ch([F_* \widetilde E]) \bullet (\iota_!(1) + i_{0!}(1) - \beta \iota_!(1) \bullet i_{0!}(1)) 
\end{align*}
and from the fact that the images of $F$ and $i_0$ do not meet we can conclude that 
\begin{equation}\label{GRREq2}
i_{\infty !}(ch([j_* E])) = ch([F_* \widetilde E]) \bullet \iota_!(1).
\end{equation}
We can now apply projection formula again to obtain
\begin{align}\label{GRREq3}
ch([F_* \widetilde E]) \bullet \iota_!(1) &= \iota_!(\iota^*(ch([F_* \widetilde E]))) \\
&= \iota_! (ch([s_*E]))
\end{align}
where the last equation follows from the application of push-pull formula to the top left square (which is homotopy Cartesian). We can now use Lemma \fref{ToyModel1} to conclude that
\begin{equation}\label{GRREq4}
\iota_!(ch([s_*E])) = \iota_!(s_!(ch(E))).
\end{equation}

Denote by $q$ the composition $\bl_{0 \times X} (\Proj^1 \times Y) \to \Proj^1 \times Y \to Y$. We have shown that
\begin{align*}
ch([j_* E]) &= 	q_!(i_{\infty !} (ch[j_* E])) \\
&= q_!(\iota_!(s_!(ch(E)))) \quad (\text{equations $(2)$-$(5)$}) \\
&= j_!(ch(E)) \quad (j \simeq q \circ \iota \circ s),
\end{align*}
which is exactly what we wanted. 
\end{proof}

We are left with proving the claims corresponding strict transforms in derived blow ups.

\begin{lem}\label{DerivedBlowUp}
Let $Z \hookrightarrow X \hookrightarrow Y$ be a chain of quasi-smooth embeddings. Now 
\begin{enumerate}[(a)]
\item There is a natural map $\bl_Z(X) \to \bl_Z(Y)$ called the \emph{strict transform} of $X$.
\item This map is a quasi-smooth embedding.
\item Given two quasi-smooth embeddings $X_1 \hookrightarrow Y$ and $X_2 \hookrightarrow Y$, then the strict transforms of $X_1$ and $X_2$ do not meet in $\bl_{X_1 \times^h_Y X_2}(Y)$.
\end{enumerate}
\end{lem}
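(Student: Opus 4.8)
The plan is to construct the map of (a) via the universal property of the derived blow-up, and then to deduce (b) and (c) by reducing — along a derived base change — to an explicit computation with blow-ups of coordinate linear subspaces of affine space.

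\emph{Part (a).} Recall from \cite{Khan} that, for a quasi-smooth center $Z \hookrightarrow Y$, the derived blow-up $\bl_Z Y \to Y$ is terminal among $Y$-schemes carrying a virtual Cartier divisor lying over $Z$; it comes with its exceptional divisor $E_Y$, and it commutes with arbitrary derived base change. Apply the universal property to $T = \bl_Z X$, with its exceptional divisor $E_X$ and the composite $\bl_Z X \to X \hookrightarrow Y$. Since $Z \hookrightarrow Y$ factors through $X$ and $\bl_Z X \to Y$ factors through $X$, we have $\bl_Z X \times_Y^h Z \simeq \bl_Z X \times_X^h Z$, and $E_X$ is by construction a virtual Cartier divisor lying over the latter, hence over $Z \hookrightarrow Y$. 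This produces a unique $Y$-morphism $\bl_Z X \to \bl_Z Y$ pulling $E_Y$ back to $E_X$; this is the strict transform.

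\emph{Parts (b) and (c).} Both are local on $Y$, since blow-ups and the construction above commute with the open immersions of an affine cover. Recall that a quasi-smooth closed immersion of virtual codimension $n$ is, Zariski-locally on its target, the derived vanishing locus of a morphism to $\A^n$. For (b), choose on an affine open of $Y$ a morphism $\mathbf f : Y \to \A^a$ with $X \simeq Y\times_{\A^a}^h\{0\}$ ($a$ the virtual codimension of $X$ in $Y$) and a morphism $\mathbf g : Y \to \A^b$ whose restriction to $X$ cuts out $Z$ ($b$ the virtual codimension of $Z$ in $X$); the latter can be lifted from $X$ to $Y$ because closed immersions are surjective on $\pi_0$ of structure rings. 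Then $(\mathbf g,\mathbf f): Y \to \A^b\times\A^a = \A^{a+b}$ satisfies $Z \simeq Y\times_{\A^{a+b}}^h\{0\}$, so $\bl_Z Y \simeq Y\times_{\A^{a+b}}^h \bl_0\A^{a+b}$ and likewise $\bl_Z X \simeq Y\times_{\A^{a+b}}^h \bl_0\A^{b}$, with $\A^b = \A^b\times\{0\}$ a coordinate subspace; under these identifications the map of (a) is the base change of the inclusion of the strict transform of $\A^b$ in $\bl_0\A^{a+b}$. Now $\bl_0\A^{a+b}$ is a classical smooth scheme, and a one-line computation in its standard charts shows that the strict transform of the coordinate subspace $\A^b$ is just $\bl_0\A^b$, a smooth closed subscheme of codimension $a$. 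Since quasi-smooth closed immersions are stable under derived base change, this gives (b). For (c) one argues identically: with $W := X_1\times_Y^h X_2$ and local models $\mathbf f_i : Y \to \A^{a_i}$ for $X_i$, we get $W \simeq Y\times_{\A^{a_1+a_2}}^h\{0\}$ via $(\mathbf f_1,\mathbf f_2)$, and the two strict transforms are base changed from the inclusions into $\bl_0\A^{a_1+a_2} \subset \A^{a_1+a_2}\times\Proj^{a_1+a_2-1}$ of $\bl_0\A^{a_1}$ and $\bl_0\A^{a_2}$, which sit over the coordinate projective subspaces $\Proj^{a_1-1}$ and $\Proj^{a_2-1}$. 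These projective subspaces are disjoint in $\Proj^{a_1+a_2-1}$, so the two strict transforms have disjoint supports; a derived fibre product of closed subschemes with disjoint supports is empty, and emptiness is preserved under the base change to $Y$, giving (c).

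\emph{Main obstacle.} The only genuinely delicate point is reconciling the two descriptions of the strict transform used above: the intrinsic one from (a) (via the universal property — needed to define the map and know it is natural) with the explicit base-change one in the local coordinates $\mathbf f,\mathbf g$ (needed to identify it with the blow-up of a linear subspace). This comes down to checking that the base-changed morphism $Y\times_{\A^{a+b}}^h\bl_0\A^b \to Y\times_{\A^{a+b}}^h\bl_0\A^{a+b}$ carries the correct virtual Cartier divisor, so that the uniqueness clause of the universal property applies — equivalently, that the exceptional divisor of a strict transform is the restriction of the ambient exceptional divisor. This is cleanest to verify directly in the linear local model. Everything else — the chart computation for strict transforms of coordinate subspaces, the disjointness of complementary coordinate projective subspaces, and stability of quasi-smooth closed immersions and of emptiness under base change — is routine.
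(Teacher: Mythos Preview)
Your proposal is correct and follows essentially the same approach as the paper: both construct the strict transform in (a) via the universal property of the derived blow-up from \cite{Khan}, and both reduce (b) and (c) to an explicit linear local model $pt \hookrightarrow \A^m \hookrightarrow \A^n$ (respectively $\A^{d_1}, \A^{d_2} \hookrightarrow \A^{d_1+d_2}$) using the compatibility of derived blow-ups with derived base change. You are more explicit than the paper about the step of reconciling the intrinsic and base-changed descriptions of the strict transform, and about lifting the defining equations of $Z$ in $X$ to $Y$; the paper simply asserts the existence of the local fibre diagram without further comment.
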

\begin{proof}
Denote by $E_X$ the exceptional divisor of $\bl_Z(X)$, and by $E_Y$ the exceptional divisor of $\bl_Z(Y)$. The diagram
\begin{center}
\begin{tikzpicture}[scale=2]
\node (A2) at (1,2) {$\bl_Z(X)$};
\node (B2) at (1,1) {$Y$};
\node (A1) at (0,2) {$E_X$};
\node (B1) at (0,1) {$Z$};
\path[every node/.style={font=\sffamily\small}]
(A1) edge[->] (A2)
(B1) edge[->] (B2)
(A1) edge[->] (B1)
(A2) edge[->] (B2)
;
\end{tikzpicture}
\end{center}
is a \emph{virtual Cartier divisor} lying over $(Y,Z)$ as in \cite{Khan}. Hence there is a unique morphism $\bl_Z(X) \to \bl_Z(Y)$ pulling back $E_Y$ to $E_X$. This takes care of $a)$.

The second part is local, and we may therefore use the local construction for the derived pullback given in \emph{loc. cit.} Namely, locally, we have the following homotopy fibre diagram
\begin{center}
\begin{tikzpicture}[scale=2]
\node (A0) at (-1,2) {$Z$};
\node (B0) at (-1,1) {$pt$};
\node (A2) at (1,2) {$Y$};
\node (B2) at (1,1) {$\A^n$};
\node (A1) at (0,2) {$X$};
\node (B1) at (0,1) {$\A^m$};
\path[every node/.style={font=\sffamily\small}]
(A0) edge[right hook->] (A1)
(B0) edge[right hook->] (B1)
(A0) edge[->] (B0)
(A1) edge[right hook->] (A2)
(B1) edge[right hook->] (B2)
(A1) edge[->] (B1)
(A2) edge[->] (B2)
;
\end{tikzpicture}
\end{center}
and the derived pullbacks are the homotopy pullbacks of the classical blow ups on the affine spaces. As the natural map $\bl_0(\A^m) \to \bl_0(\A^n)$ is a regular embedding, we are done with $b)$.

The question is again local. Let $d_i$ be the virtual codimension of $X_i \hookrightarrow Y$. We may assume the existence of homotopy pullback diagrams
\begin{center}
\begin{tikzpicture}[scale=2]
\node (A2) at (1,2) {$Y$};
\node (B2) at (1,1) {$\A^{d_i}$};
\node (A1) at (0,2) {$X_i$};
\node (B1) at (0,1) {$pt$};
\path[every node/.style={font=\sffamily\small}]
(A1) edge[->] (A2)
(B1) edge[->] (B2)
(A1) edge[->] (B1)
(A2) edge[->] (B2)
;
\end{tikzpicture}
\end{center}
giving us the diagram
\begin{center}
\begin{tikzpicture}[scale=2]
\node (A2) at (1,2) {$Y$};
\node (B2) at (1,1) {$\A^{d_1 + d_2}$};
\node (A1) at (0,2) {$X_1 \cap X_2$};
\node (B1) at (0,1) {$pt$};
\path[every node/.style={font=\sffamily\small}]
(A1) edge[->] (A2)
(B1) edge[->] (B2)
(A1) edge[->] (B1)
(A2) edge[->] (B2)
;
\end{tikzpicture}
\end{center}
Using again the fact that the derived blow up is given by homotopy pullback of the classical blow up on affine spaces, we conclude that the strict transforms of $X_i$ are disjoint because this holds for the strict transforms of $\A^{d_1}$ and $\A^{d_2}$ in $\bl_0(\A^{d_1 + d_2})$. This takes care of $c)$.
\end{proof}

We are now ready to prove the main theorem of the section: the cohomology groups $\Omega_{\beta,\beta^{-1}}^*(X)$ and $K^0[\beta, \beta^{-1}](X)$ are isomorphic for all $X$.

\begin{thm}\label{KTheoryCobordism}
Let $X$ be a quasi-projective derived scheme. The natural map $\omega_{K}: \Omega_{\beta,\beta^{-1}}^*(X) \to K^0[\beta, \beta^{-1}](X)$ induced by the universal Grothendieck transformation from $\Omega^*$ is an isomorphism.
\end{thm}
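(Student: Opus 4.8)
The plan is to show that the Chern character $ch\colon K^0[\beta,\beta^{-1}](X) \to \Omega^*_{\beta,\beta^{-1}}(X)$ is a two-sided inverse of $\omega_K$. Everything needed about $ch$ has been established: it is $\Z[\beta,\beta^{-1}]$-linear, respects pullbacks and products, and --- the essential input --- commutes with Gysin pushforwards along proper quasi-smooth maps (Lemma \fref{ChernCharacterCommutesWithPushforward}). On the other side, $\omega_K$ is a Grothendieck transformation, hence $\Z[\beta,\beta^{-1}]$-linear, compatible with orientations (so $\theta^\Omega(f) \mapsto \beta^{d}[\OO_V]$ when $f$ has relative virtual dimension $d$) and with first Chern classes. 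Consequently both composites $ch\circ\omega_K$ and $\omega_K\circ ch$ are $\Z[\beta,\beta^{-1}]$-linear, and it suffices to check that each one fixes a generating set of the module in question.

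For $\omega_K\circ ch = \mathrm{id}_{K^0[\beta,\beta^{-1}](X)}$ I would use that, by Proposition \fref{KTheory}, $K^0(X)$ is generated as an abelian group by the classes $[E]$ of vector bundles, so that $K^0[\beta,\beta^{-1}](X)$ is generated over $\Z[\beta,\beta^{-1}]$ by the $[E]$. For such a class one computes $ch([E]) = r(E) - \beta\, c_1(E^\vee)$ in $\Omega^*_{\beta,\beta^{-1}}(X)$ (recall $a_{11}\mapsto -\beta$), and applying $\omega_K$ gives $r(E) - \beta\, c_1^{K}(E^\vee)$. Pulling $E$ back to a tower of projective bundles over $X$ along which it acquires a filtration with line-bundle quotients --- where $\pi^*$ is injective by the projective bundle formula and the splitting principle is available --- and using $c_1^{K}(\Li) = \beta^{-1}(1 - [\Li^\vee])$, one gets $c_1^{K}(E^\vee) = \beta^{-1}(r(E) - [E])$, whence $r(E) - \beta\, c_1^{K}(E^\vee) = [E]$. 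Thus $\omega_K\circ ch$ fixes every $[E]$ and is the identity.

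Next I would treat $ch\circ\omega_K = \mathrm{id}_{\Omega^*_{\beta,\beta^{-1}}(X)}$. Here the point is that, by Construction \fref{MainConstruction} and the description of Yokura's theory recalled in Section \fref{MainConstructionSect}, the ring $\Omega^*(X) = \Omega^*(X\to X)$ is a quotient of $\M^*_\Laz(X\to X)$, which is free over $\Laz$ on the classes $[V\xrightarrow{f} X]$ with $f$ proper and quasi-smooth; hence $\Omega^*_{\beta,\beta^{-1}}(X)$ is generated over $\Z[\beta,\beta^{-1}]$ by such classes, and it is enough to verify $ch(\omega_K([V\to X])) = [V\to X]$. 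Writing $[V\to X] = f_!(1_V) = f_*(\theta^\Omega(f))$ and letting $d$ be the relative virtual dimension of $f$, compatibility of $\omega_K$ with pushforwards and orientations gives
\begin{equation*}
\omega_K([V\to X]) = f_*\bigl(\omega_K(\theta^\Omega(f))\bigr) = f_*\bigl(\beta^{d}[\OO_V]\bigr) = \beta^{d}[Rf_*\OO_V],
\end{equation*}
and this element is precisely the Gysin pushforward $f_!([\OO_V])$ computed in $K^0[\beta,\beta^{-1}]$. Since $ch([\OO_V]) = 1_V$ (rank one, vanishing first Chern class), Lemma \fref{ChernCharacterCommutesWithPushforward} yields
\begin{equation*}
ch(\omega_K([V\to X])) = ch\bigl(f_!([\OO_V])\bigr) = f_!\bigl(ch([\OO_V])\bigr) = f_!(1_V) = [V\to X].
\end{equation*}
Hence both composites are the identity, so $\omega_K$ and $ch$ are mutually inverse isomorphisms, which is the assertion.

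The substantive work has already been done: the only analytically delicate statement, that $ch$ commutes with Gysin pushforwards, is isolated in Lemma \fref{ChernCharacterCommutesWithPushforward} (proved via derived deformation to the normal cone and the derived blow-up of Lemma \fref{DerivedBlowUp}). What remains is bookkeeping, and I expect the one point deserving care to be the identification of the two generating sets --- the classes $[V\to X]$ of proper quasi-smooth maps generating $\Omega^*$ over $\Laz$, and the vector-bundle classes generating $K^0$ --- together with keeping the Bott-periodic normalizations consistent (that is, matching $\theta^\Omega(f)$ with $\beta^{d}[\OO_V]$ and checking that $\beta^{d}[Rf_*\OO_V]$ is the $K^0[\beta,\beta^{-1}]$-Gysin pushforward of $[\OO_V]$). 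Once these are in place the theorem follows formally from the properties of $ch$ and $\omega_K$ recorded above.
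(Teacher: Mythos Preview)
Your proposal is correct and follows essentially the same route as the paper: show that $ch$ is a two-sided inverse to $\omega_K$ by checking each composite on generators, using that $\Omega^*_{\beta,\beta^{-1}}(X)$ is generated by Gysin pushforwards $f_!(1_V)$ (where Lemma \ref{ChernCharacterCommutesWithPushforward} does the work) and that $K^0[\beta,\beta^{-1}](X)$ is generated by vector-bundle classes (where preservation of Chern classes does the work). The only cosmetic difference is that the paper phrases the second step as proving surjectivity of $\omega_K$ and invokes the Chern-class formula in $K^0$ directly rather than via the splitting principle, but the content is the same.
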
 
\begin{proof}
We claim that the inverse to $\omega_K$ is given by $ch$. First of all, by construction, both $\omega_K$ and $ch$ preserve identity elements, as can be checked by the simple computations
\begin{align*}
\omega_K(1_X) = [\OO_X]
\end{align*}
and 
\begin{align*}
ch([\OO_X]) &= 1_X - \beta c_1(\OO_X^\vee) \\
&= 1_X.
\end{align*}

As $ch$ commutes with pullbacks by Theorem \fref{ChernClasses}, and by Gysin pushforwards by Lemma \fref{ChernCharacterCommutesWithPushforward}, we see that the transformation
$$ch \circ \omega_K: \Omega_{\beta,\beta^{-1}}^* \to \Omega_{\beta,\beta^{-1}}^*$$
commutes with pullbacks, Gysin pushforwards, and preserves the identity elements. But there is clearly only one such transformation --- the identity --- as $\Omega^*(X)$ is generated by classes of form $f_!(1_V)$, where $f: V \to X$ is both proper and quasi-smooth.

In order to show that also $\omega_K \circ ch$ is the identity, we are left to show that the transformation $\omega_K$ is surjective. But this is easy: as the first Chern class of a vector bundle $E$ in $K^0(X)$ is $\beta (r(E) - [E^\vee])$ (the usual $K$-theory Chern class twisted by $\beta$), and as $\omega_K$ preserves Chern classes, we see that
\begin{align*}
\omega_K (r(E) 1_X - \beta^{-1} c_1(E^\vee)) &= r(E) - \beta^{-1} \beta (r(E^\vee) - [E]) \\
&= [E] 
\end{align*}
proving the surjectivity of $\omega_K$, and finishing the proof.
\end{proof}

\begin{rem}
Setting $\beta=1$ gives the ordinary algebraic $K^0$-theory from the Bott periodical one, and hence also this can be obtained from the algebraic cobordism ring.
\end{rem}

\begin{rem}\label{HomotopyInvarianceFails}
The above theorem has multiple interesting consequences for the bivariant cobordism theory $\Omega^*$. First of all, we know that there exists toric varieties which have very large $K^0$-groups: in fact, they are uncountable \cite{Gub}. Hence the algebraic cobordism rings of these toric varieties are uncountable as well. This can never happen for the operational cobordism rings: they are always countable for toric varieties. Hence $\Omega^* \neq \op \Omega^*$.

It is not true in general that when we have an affine space bundle $p: E \to X$ (or even a trivial vector bundle), then $p^*: K^0(X) \to K^0(E)$ is an isomorphism. Nor does it hold in general that when $i: U \hookrightarrow X$ is an open embedding, then $i^*: K^0(X) \to K^0(U)$ is a surjection. Both of these claims are therefore false for algebraic cobordism rings as well. The failure of the homotopy property should set our $\Omega^*$ apart from any $\A^1$-homotopical bivariant theory. In particular, the groups $\Omega^*(X \to Y)$ constructed here cannot always agree with the groups $MGL^{BM}_{2*,*}(X/Y)$ constructed by Déglise in \cite{De}.
\end{rem}

\subsection{A Bivariant Intersection Theory}\label{BivariantChowSect}

Using the new theory of bivariant algebraic cobordism, we may construct a new bivariant intersection theory. Namely, consider $\Z$ as a $\Laz$-algebra along the map classifying the additive formal group law. We define $\ch^*$ as the universal additive Borel-Moore bivariant theory $\Z \otimes_{\Laz} \Omega^*$. We can combine the Proposition \fref{InducedHomologyIsBordism} with algebraic Spivak's theorem \fref{AlgebraicSpivak} in order to see that $\ch^*(X \to pt)$ is the Chow group $\ch_*(\tau_0 X)$ of $X$ for all quasi-projective derived schemes, and it follows from Poincaré duality \fref{PoincareDuality} that for smooth $X$ the induced cohomology ring $\ch^*(X)$ agrees with the usual Chow ring.

The next theorem is an extension of some previously known results, and its proof uses the old idea of Quillen of twisting cohomology theories. However, we will first need some definitions.

\begin{defn}
Let $\B$ be a Borel-Moore bivariant theory, and suppose $\tau = (b_0, b_1, b_2,...)$ is an infinite sequence of elements of $\B(pt)$ with $b_0 = 1$. For any line bundle $\Li$ on a quasi-projective derived scheme $X$, define its \emph{inverse Todd class} by
\begin{equation*}
\mathrm{Td}^{-1}_{\tau}(\Li) :=  \sum_i b_i c_1(\Li)^i \in \B^*(X).
\end{equation*}
This formula has a natural extension for all vector bundles so that
$$\mathrm{Td}^{-1}_{\tau}(E) = \mathrm{Td}^{-1}_{\tau}(E') \bullet \mathrm{Td}^{-1}_{\tau}(E'')$$
whenever we have a short exact sequence
$$0 \to E' \to E \to E'' \to 0$$
of vector bundles on $X$, which can be derived via the usual way of writing the symmetric power series
$$\prod_{j=1}^r \sum_i b_i x_j^i \in \B^*(pt)[[x_1,...,x_r]]$$
in terms of the elementary symmetric polynomials to obtain a power series $\mathrm{Td}^{-1}_{\tau,r}(s_1,...,s_r) \in \B^*(pt)[[s_1,...,s_r]]$, and then setting
$$\mathrm{Td}^{-1}_{\tau}(E) := \mathrm{Td}^{-1}_{\tau,r}(c_1(E), ..., c_r(E)) \in \B^*(X)$$
for a vector bundle $E$ of rank $r$. For any vector bundle $E$ on $X$, we define the \emph{Todd class} $\mathrm{Td}_\tau(E)$ as the inverse $(\mathrm{Td}^{-1}_\tau(E))^{-1}$ of the inverse Todd class.

If $f$ is a quasi-smooth map, with cotangent complex equivalent to $E_1 \to E_0$, we define its inverse Todd class as
\begin{equation*}
\mathrm{Td}^{-1}_{\tau}(f) := {\mathrm{Td}^{-1}_{\tau}(E_0^\vee) \over \mathrm{Td}^{-1}_{\tau}(E_1^\vee)}.
\end{equation*}
This is well defined, as the multiplicativity of $\mathrm{Td}^{-1}_{\tau}$ under exact sequences implies that $\mathrm{Td}^{-1}_{\tau}$ is well defined for $K$-theory classes of perfect complexes (see Proposition \fref{KTheory}). Properties of the cotangent complex show that the inverse Todd class is multiplicative in compositions. Hence, we may equip the underlying bivariant theory of $\B$ with another orientation
\begin{equation*}
\theta_\tau(f) := \mathrm{Td}^{-1}_\tau(f) \bullet \theta(f) 
\end{equation*}
giving us a new Borel-Moore bivariant theory $\B_\tau$. We call this process of modifying Borel-Moore bivariant theories \emph{twisting}.
\end{defn}

\begin{lem}\label{TwistedChern}
Let $\B^*$ be a Borel-Moore bivariant theory, and let $\tau = (1, b_1, b_2, ...)$ be a sequence of elements of $\B^*(pt)$. Now the first Chern classes $c_1^\tau$ of the twisted bivariant theory $\B^*_\tau$ satisfy the equation
\begin{align*}
c_1^\tau(\Li) &= \mathrm{Td}_\tau(\Li^\vee) \bullet c_1(\Li) \\
&= c_1(\Li) + b_1 c_1(\Li)^2 + b_2 c_1(\Li)^3 + \cdots \in \B^*(X)
\end{align*}
for all quasi-projective derived schemes $X$, and all line bundles $\Li$ on $X$.
\end{lem}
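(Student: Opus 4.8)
The plan is to compute $c_1^\tau(\Li)$ as an Euler class inside the twisted theory and then absorb all of the twisting into a single Todd factor. Since $\B^*_\tau$ is again a Borel--Moore bivariant theory, its Chern classes satisfy normalization (the corollary at the end of Section~\ref{ChernClassSect}, which asserts that the Chern classes of any Borel--Moore bivariant theory satisfy the Whitney and normalization formulas). Hence $c_1^\tau(\Li) = s^* s^\tau_!(1_X)$, where $s\colon X \to V$ is the zero section of the total space $V$ of $\Li$ and $\pi\colon V \to X$ is the projection. Unwinding the Gysin pushforward and the definition of the twisted orientation,
\[
 s^\tau_!(1_X) = s_*(\theta_\tau(s)) = s_*\bigl(\mathrm{Td}^{-1}_\tau(s)\bullet\theta(s)\bigr),
\]
where $\theta(s)$ is the untwisted orientation of the quasi-smooth embedding $s$.

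The next step is to identify the factor $\mathrm{Td}^{-1}_\tau(s)$. The cotangent complex $L_s$ is concentrated in a single degree, where it is the conormal bundle of $\Li$; plugging this into the definition of the inverse Todd class of a quasi-smooth morphism yields $\mathrm{Td}^{-1}_\tau(s) = \mathrm{Td}_\tau(\Li^\vee) \in \B^*(X)$, and since $\pi\circ s = \mathrm{id}$ this class equals $s^*\bigl(\mathrm{Td}_\tau(\pi^*\Li^\vee)\bigr)$. Using commutativity of $\B^*$ in the special case $g=\mathrm{id}$ of Definition~\ref{CommutativeTheory} (which gives $s^*(A)\bullet\theta(s) = \theta(s)\bullet A$ for $A\in\B^*(V)$) together with axiom $(A_{12})$, I can move the Todd factor past $s_*$:
\[
 s^\tau_!(1_X) = s_*\bigl(\theta(s)\bullet\mathrm{Td}_\tau(\pi^*\Li^\vee)\bigr) = s_*(\theta(s))\bullet\mathrm{Td}_\tau(\pi^*\Li^\vee) \in \B^*(V).
\]
Applying $s^*$, which is a ring homomorphism, and using $s^*\pi^* = \mathrm{id}$ together with $s^* s_*(\theta(s)) = s^* s_!(1_X) = c_1(\Li)$ (normalization in the \emph{untwisted} theory $\B^*$), I obtain $c_1^\tau(\Li) = \mathrm{Td}_\tau(\Li^\vee)\bullet c_1(\Li)$, which is the first asserted equality.

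For the second equality, it remains to carry out the formal power-series computation: expand $\mathrm{Td}_\tau(\Li^\vee) = \bigl(\sum_i b_i c_1(\Li^\vee)^i\bigr)^{-1}$, substitute $c_1(\Li^\vee) = \chi(c_1(\Li))$ with $\chi$ the formal inverse of the formal group law of $\B^*$, and multiply through by $c_1(\Li)$ to read off $c_1(\Li) + b_1 c_1(\Li)^2 + b_2 c_1(\Li)^3 + \cdots$; because the Chern classes are nilpotent every expression appearing is a finite sum, so there is no convergence issue. I expect the only genuinely delicate point to be the bookkeeping in the middle paragraph --- exactly which bundle (and which dual) the conormal bundle of the zero section is, and hence whether the twist enters as $\mathrm{Td}_\tau(\Li^\vee)$ or as its inverse --- but once the conventions for $L_s$ and for the inverse Todd class of a quasi-smooth morphism are fixed this is mechanical, and everything else is a routine manipulation with the bivariant axioms and the already-established behaviour of Chern classes under pullback.
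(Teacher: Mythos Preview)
Your argument is essentially the paper's: both compute $s^*(s_!^\tau(1_X))$, identify the twist $\mathrm{Td}^{-1}_\tau(s)$ with $\mathrm{Td}_\tau(\Li^\vee)$ via the cotangent complex of the zero section, and then extract the Todd factor from $s_*$ before applying $s^*$. The only difference is in that extraction step: the paper writes $\mathrm{Td}_\tau(\Li^\vee)=s^*\mathrm{Td}_\tau(\pi^*\Li^\vee)$ and applies the projection formula (axiom $(A_{123})$) directly, whereas you swap the factors using commutativity and then use $(A_{12})$ --- the paper's route is marginally cleaner since commutativity is not listed among the axioms in Definition~\ref{BorelMooreBivariant}, but the distinction is cosmetic.
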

\begin{proof}
Let us denote by $f_!^\tau$ the \emph{twisted Gysin pushforward} for a proper quasi-smooth morphism $f$ of pure relative dimension. Let $\Li$ be a vector bundle on $X$, and denote by $s$ the zero section $X \to \Li$, and by $\pi$ the structure morphism $\Li \to X$. Recalling that the cotangent complex of $X \to \Li$ is quasi-isomorphic to $\Li^\vee \to 0$, we can compute that
\begin{align*}
s^*(s_!^\tau(1_X)) &= s^*(s_*(1_X \bullet \mathrm{Td}^{-1}_\tau (L_{X / \Li}) \bullet \theta(s))) \\
&= s^*(s_*(1_X \bullet \mathrm{Td}_\tau (\Li^\vee) \bullet \theta(s))) \\
&= s^*(s_*(\mathrm{Td}_\tau(\Li^\vee) \bullet 1_X \bullet \theta(s))) \\
&= s^*(\mathrm{Td}_\tau(\pi^* \Li^\vee) \bullet s_*(1_X \bullet \theta(s))) \quad (\text{projection formula}) \\
&= s^*(\mathrm{Td}_\tau(\pi^* \Li^\vee) \bullet s_!(1_X)) \\
&= \mathrm{Td}_\tau(\Li^\vee) \bullet s^*(s_!(1_X)),
\end{align*}
which proves the claim.
\end{proof}

\begin{thm}
With $\Q$-coefficients, we have a natural isomorphism of bivariant theories.
\begin{equation*}
\Omega^*_\Q \cong \Laz_\Q \otimes \ch^*_\Q 
\end{equation*}
\end{thm}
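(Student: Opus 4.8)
The plan is to run Quillen's twisting argument and then invoke the universal property of $\ch^*$ twice. Everything rests on the classical fact that, rationally, the universal formal group law is isomorphic to the additive one: there is a power series $\exp_F(x) = x + b_1 x^2 + b_2 x^3 + \cdots \in \Laz_\Q[[x]]$ with inverse $\log_F$ satisfying $F(x,y) = \exp_F(\log_F(x) + \log_F(y))$. First I would set $\tau := (1, b_1, b_2, \dots)$, regarded as a sequence of elements of $(\Laz_\Q \otimes_\Q \ch^*_\Q)(pt) = \Laz_\Q$, and form the twisted Borel--Moore bivariant theory $\mathcal{T} := (\Laz_\Q \otimes_\Q \ch^*_\Q)_\tau$. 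By Lemma \fref{TwistedChern} its first Chern classes are $c_1^\tau(\Li) = \exp_F(c_1(\Li))$, and since the untwisted theory $\Laz_\Q \otimes_\Q \ch^*_\Q$ is additive one gets $c_1^\tau(\Li \otimes \Li') = \exp_F(c_1(\Li) + c_1(\Li')) = F(c_1^\tau(\Li), c_1^\tau(\Li'))$; thus $\mathcal{T}$ is a $\Laz_\Q$-linear Borel--Moore bivariant theory whose formal group law is the universal one.

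Next I would produce two mutually inverse Grothendieck transformations. Base-changing the universal Grothendieck transformation out of $\Omega^*$ to $\Q$-coefficients gives a unique orientation-preserving Grothendieck transformation $\omega \colon \Omega^*_\Q \to \mathcal{T}$; on the point it is the ring map classifying the formal group law of $\mathcal{T}$, i.e.\ the universal one, hence it is the identity $\Laz_\Q \to \Laz_\Q$, and in particular $\omega$ is $\Laz_\Q$-linear. For the reverse map, recall that $\ch^*$ is the universal additive Borel--Moore bivariant theory; by the generators-and-relations description of $\ch^*$, base change along $\Q \to \Laz_\Q$ makes $\Laz_\Q \otimes_\Q \ch^*_\Q$ the universal $\Laz_\Q$-linear additive Borel--Moore bivariant theory, and applying the twisting correspondence (which turns Grothendieck transformations of untwisted theories into Grothendieck transformations of the twisted ones) shows that $\mathcal{T}$ is the universal $\Laz_\Q$-linear Borel--Moore bivariant theory carrying the universal formal group law. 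Since $\Omega^*_\Q$ is $\Laz_\Q$-linear with the universal formal group law, this universal property yields a Grothendieck transformation $\rho \colon \mathcal{T} \to \Omega^*_\Q$ over $\Laz_\Q$ preserving orientations.

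Finally I would conclude by uniqueness. The composite $\rho \circ \omega \colon \Omega^*_\Q \to \Omega^*_\Q$ is an orientation-preserving Grothendieck transformation, hence the identity by the universal property of $\Omega^*$; likewise $\omega \circ \rho \colon \mathcal{T} \to \mathcal{T}$ is a $\Laz_\Q$-linear orientation-preserving Grothendieck transformation, hence the identity by the universal property of $\mathcal{T}$. Therefore $\omega$ is an isomorphism of bivariant theories, and unwinding the definitions it is precisely the claimed isomorphism $\Omega^*_\Q \cong \Laz_\Q \otimes \ch^*_\Q$ --- an isomorphism of the underlying bivariant theories compatible with the $\Laz_\Q$-module structures, under which the orientation of $\Omega^*_\Q$ corresponds to the twisted orientation $\theta_\tau(f) = \mathrm{Td}^{-1}_\tau(f) \bullet \theta(f)$ on the right-hand side. (As a sanity check, on the induced homology $\omega$ recovers the Levine--Morel isomorphism $\Omega_*(\tau_0 X)_\Q \cong \Laz_\Q \otimes_\Z \ch_*(\tau_0 X)$ via Proposition \fref{InducedHomologyIsBordism} and the algebraic Spivak theorem \fref{AlgebraicSpivak}.)

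I expect the main obstacle to be the bookkeeping in the twisting correspondence: one must check that twisting is an equivalence between $\Laz_\Q$-linear Borel--Moore bivariant theories with the additive law and those with the universal law, functorially in Grothendieck transformations. This amounts to the multiplicativity of $\mathrm{Td}^{-1}_\tau$ together with its compatibility with pushforward, pullback and the bivariant product, all of which are already implicit in the twisting construction but deserve to be stated carefully. A secondary point, easily dispatched, is that $\Laz_\Q \otimes_\Q \ch^*_\Q$ inherits the universal property of $\ch^*$ after base change. The genuinely substantive ingredient is external and classical: the existence of $\exp_F$ over $\Laz_\Q$, that is, the rational triviality of the universal formal group law.
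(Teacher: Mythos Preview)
Your proposal is correct and follows essentially the same strategy as the paper: twist using the rational logarithm/exponential, then invoke universal properties on both sides. The only real difference is in how the composite $\omega \circ \rho$ is handled. You package this as a universal property of $\mathcal{T}$, obtained by transporting the universal property of $\Laz_\Q \otimes_\Q \ch^*_\Q$ through a ``twisting equivalence'' between additive and universal-FGL theories; as you yourself note, this requires checking that twisting is functorial in Grothendieck transformations and that the base-changed $\ch^*$ retains its universal property. The paper avoids this machinery: rather than twisting only $\Laz_\Q \otimes \ch^*_\Q$, it twists each side toward the other and uses the untwisted universal properties directly; for the composite endomorphism of $\Laz_\Q \otimes \ch^*_\Q$ it simply observes that any orientation-preserving endomorphism fixes the image of the canonical map $\eta \colon \ch^*_\Q \to \Laz_\Q \otimes \ch^*_\Q$ and acts as the identity on the coefficient ring $\Laz_\Q$, hence is the identity on the $\Laz_\Q$-span of $\mathrm{im}(\eta)$, which is everything. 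Your route is cleaner conceptually (one twist, one universal object), while the paper's is slightly more elementary in that it never needs to articulate the twisting correspondence as an equivalence.
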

\begin{rem}
$\B^*_\Q$ is a standard notation for $\Q \otimes \B^*$.
\end{rem}
\begin{proof}
It follows from well known principles that a Borel-Moore bivariant theory $\B^*$ with rational coefficients can be twisted to a Borel-Moore bivariant theory $\B^*_\tau$ the additive (and hence any other) formal group law over $\B^*(pt)$. This follows from the above Lemma \fref{TwistedChern} and \cite{LM} Lemma 4.1.29 (the part of the power series in the statement is played by the second expression for $c_1^\tau$ in Lemma \fref{TwistedChern}).

Hence, we may twist $\Omega^*_\Q$ to have the additive formal group law so that we get a canonical map $\Laz_\Q \otimes \ch^*_\Q \to \Omega^*_\Q$. Similarly, we can use the inverse of the previous twist in order to make $\Laz_\Q \otimes \ch^*_\Q$ to have the universal formal group law, and this gives us a canonical map $\Omega^*_\Q \to \Laz_\Q \otimes \ch^*_\Q$. We claim that these maps are inverses of each other.

We first note that the composites $\Omega^*_\Q  \to \Omega^*_\Q$ and $\Laz_\Q \otimes \ch^*_\Q \to \Laz_\Q \otimes \ch^*_\Q$ at least preserve orientations, and are therefore at least endomorphisms of Borel-Moore bivariant theories. The former morphism is the identity, as that is the only endomorphism of $\Omega^*_\Q$ as a Borel-Moore bivariant theory. Moreover, as any endomorphism of $\Laz_\Q \otimes \ch^*_\Q$ preserves the image of the universal map $\eta: \ch^*_\Q \to \Laz_\Q \otimes \ch^*_\Q$ of additive bivariant Borel-Moore theories, and as as the above endomorphism induces identity on $\Laz_\Q = \Laz_\Q \otimes \ch^*_\Q(pt),$ the given endomorphism of $\Laz_\Q \otimes \ch^*_\Q$ must be the identity at least on the $\Laz$-span of the image of $\eta$, which in this case is the whole theory.
\end{proof}

A corollary of the results of the previous section and the above theorem is a Riemann-Roch transformation between $K^0$ and $\ch^*_\Q$.

\begin{thm}\label{DerivedGRR}
There is a natural transformation of cohomology rings
\begin{equation*}
ch: K^0(X) \to \ch^*_\Q(X) 
\end{equation*}
for all quasi-projective derived schemes $X$. Moreover, after tensoring with $\Q$ this becomes an isomorphism.
\end{thm}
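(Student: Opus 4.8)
The construction of $ch$ is assembled from comparison maps already in place. Setting $\beta = 1$ in Theorem \fref{KTheoryCobordism} (see the Remark on specialising $\beta=1$ that follows it), the universal Grothendieck transformation induces a ring isomorphism $\Omega^*_m(X) := \Omega^*(X)\otimes_\Laz \Z_m \isomto K^0(X)$, natural in $X$, whose inverse is the $\beta=1$ specialisation of the Chern character of Section \fref{ConnerFloydSect}; here $\Z_m$ is $\Z$ with the multiplicative formal group law $x + y - xy$. I would then define $ch$ as the composite
\[
K^0(X)\ \isomto\ \Omega^*_m(X)\ \longrightarrow\ \Omega^*_m(X)\otimes_\Z\Q\ =\ \Omega^*_{m,\Q}(X)\ \isomto\ \ch^*_\Q(X),
\]
where $\Omega^*_{m,\Q} := \Omega^*\otimes_\Laz\Q_m$, $\Q_m$ is $\Q$ with the multiplicative formal group law, and the last arrow is the isomorphism of bivariant theories constructed below. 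Each factor is a ring homomorphism, contravariantly natural in $X$, hence so is $ch$. Unwinding the definitions, $ch$ sends the class of a line bundle $\Li$ to (a normalisation of) $\exp(c_1(\Li))$, recovering the classical shape of the Chern character.

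The substantive point is the identification $\Omega^*_{m,\Q}\cong\ch^*_\Q$, and the plan is to rerun the twisting argument from the proof of the preceding theorem. Forgetting the (now broken) grading, $\Omega^*_{m,\Q}$ is a $\Q$-linear Borel--Moore bivariant theory with formal group law $x + y - xy$, while $\ch^*_\Q$ is a $\Q$-linear Borel--Moore bivariant theory with the additive formal group law; over $\Q$ these two formal group laws are interchanged by the twist attached to the logarithm $\ell(x) = -\log(1-x) = \sum_{n\ge 1}x^n/n$, whose compositional inverse is $1 - e^{-x}$. By Lemma \fref{TwistedChern} and the attendant power-series manipulation (as in the cited proof and in \cite{LM}), twisting $\ch^*_\Q$ by the sequence of coefficients of $1-e^{-x}$ produces a $\Q$-linear Borel--Moore bivariant theory with formal group law $x+y-xy$ and the \emph{same} underlying bivariant theory. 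Since twisting by a fixed sequence is invertible (the inverse being the twist by $\ell$), it defines an equivalence between $\Q$-linear Borel--Moore bivariant theories with the additive law and those with the law $x+y-xy$, and therefore carries the initial object $\ch^*_\Q$ of the first family to the initial object of the second. On the other hand $\Omega^*_{m,\Q} = \Omega^*_\Q\otimes_{\Laz_\Q}\Q_m$ is \emph{also} initial among the latter, because $\Omega^*_\Q$ is initial among all $\Q$-linear Borel--Moore bivariant theories. Two initial objects are canonically isomorphic, and since the twist does not change underlying bivariant theories this yields the desired isomorphism $\Omega^*_{m,\Q}\cong\ch^*_\Q$.

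Granting this, the final assertion drops out: after tensoring the displayed composite with $\Q$, the middle arrow $\Omega^*_m(X)\to\Omega^*_m(X)\otimes_\Z\Q$ becomes an isomorphism, so $ch\otimes_\Z\Q\colon K^0(X)\otimes_\Z\Q\to\ch^*_\Q(X)$ is a composite of three isomorphisms. (One also reads off the Grothendieck--Riemann--Roch statement: since $ch$ is by construction a Grothendieck transformation into the \emph{twisted} theory, its failure to commute with a Gysin pushforward $f_!$ along a proper quasi-smooth $f$ is precisely the Todd class of the twist, i.e.\ $ch(f_!\alpha) = f_!\bigl(\mathrm{Td}(f)\bullet ch(\alpha)\bigr)$.)

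I expect the main obstacle to lie in the twisting bookkeeping of the middle paragraph: one must check carefully that the Grothendieck transformations extracted from the two universal properties are compatible with the twisted orientations, so that the round-trip composites are genuinely orientation-preserving endomorphisms of the relevant Borel--Moore bivariant theories and the uniqueness clauses force them to be the identity; equivalently, that the Todd-class corrections produced by $\ell$ and by $1-e^{-x}$ cancel exactly. Everything else --- the ring-homomorphism property, naturality, and the reduction to initial objects --- is formal given Theorem \fref{KTheoryCobordism}, the isomorphism $\Omega^*_\Q\cong\Laz_\Q\otimes\ch^*_\Q$ of the preceding theorem, and Lemma \fref{TwistedChern}.
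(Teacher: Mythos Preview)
Your proposal is correct and follows essentially the same route as the paper: identify $K^0$ with $\Z_m\otimes_\Laz\Omega^*$ via Theorem \fref{KTheoryCobordism} at $\beta=1$, then use the twisting mechanism (Lemma \fref{TwistedChern}) over $\Q$ to pass between the multiplicative and additive specialisations, invoking the universal properties to see that the two canonical maps are mutually inverse. Your framing in terms of initial objects and explicit logarithm/exponential twists is a bit more detailed than the paper's terse ``similar reasoning as in the previous proof,'' but the content is the same.
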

\begin{proof}
Let $\Z_a$ and $\Z_m$ be the integers considered as $\Laz$-algebras via the map classifying the formal group laws $x+y$ (additive) and $x + y - xy$ (multiplicative) respectively. By Theorem \fref{KTheoryCobordism} (after setting $\beta = 1$) the cohomology rings of $\Z_m \otimes_\Laz \Omega^*$ are exactly $K^0$. Hence, if we twist $\ch_\Q^*$ to have the multiplicative formal group law, then the classifying map from $\Omega^*$ factors through $\Z_m \otimes_\Laz \Omega$ giving us the desired maps $K^0(X) \to \ch^*_\Q(X)$.

On the other hand, $\Q_m \otimes_\Laz \Omega^*$ may be twisted to have the additive formal group law, yielding a canonical map $\ch^*_\Q \to \Q_m \otimes_\Laz \Omega^*$. Similar reasoning as in the previous proof shows that this is the inverse to the canonical map $\Q_m \otimes_\Laz \Omega^* \to \ch_\Q^*$, which shows that the map $K^0_\Q (X) \to \ch^*_\Q(X)$ is invertible.
\end{proof}

\begin{rem}
Why is the above theorem a generalization of the usual Riemann-Roch? We identified twisted $\ch^*_\Q$ with $\Q_m \otimes_\Laz \Omega^*_\Q$.  The twist is induced by the sequence $\tau$ determined by the formula
\begin{equation*}
\sum_i b_i x^i = {x \over 1- e^{-x}}.
\end{equation*}
Indeed, the inverse Todd class of the zero section $s: X \to \Li$ is by definition
\begin{equation*}
\mathrm{Td}_{\tau}^{-1} (s) = \mathrm{Td}_{\tau} (\Li) =  {1 - e^{-c_1(\Li)} \over c_1(\Li)},
\end{equation*}
giving us the formula $c_1^{\tau}(\Li) = 1 - e^{-c_1(\Li)}$ for the twisted first Chern classes. Hence, in the $\tau$ twisted Chow theory, we have
\begin{align*}
c_1^{\tau}(\Li \otimes \Li') &= 1 - e^{- c_1(\Li) - c_1(\Li')} \\
&= (1 - e^{-c_1(\Li)}) + (1 - e^{-c_1(\Li')}) - (1 - e^{-c_1(\Li)}) (1 - e^{-c_1(\Li')} ) \\
&= c_1^{\tau}(\Li) + c_1^{\tau}(\Li') - c_1^{\tau}(\Li)c_1^{\tau}(\Li'),
\end{align*}
showing that $\ch^*_{\tau} \cong \Q_m \otimes_\Laz \Omega^*_\Q$.

The induced map $ch: K^0 \to \ch_\Q^*$ of cohomology theories does not commute with pushforwards along proper quasi-smooth maps $f: X \to Y$. This failure is measured by $\mathrm{Td}_\tau^{-1} (f)$ in the sense that
\begin{align*}
ch(f_!(\alpha)) &= ch(f_*(\alpha \bullet \theta(f))) \\
&= f_*(ch(\alpha) \bullet ch(\theta(f))) \\
&= f_*(ch(\alpha) \bullet \mathrm{Td}^{-1}_\tau(f) \bullet \theta'(f))\\
&= f_!(ch(\alpha) \bullet \mathrm{Td}^{-1}_\tau(f)),
\end{align*}
where $\theta$ is the orientation of $K$-theory and $\theta'$ the orientation of Chow theory.

Moreover, $ch$ is the usual Chern character. To see this, consider a line bundle $\Li$ on $X$. We know that $[\Li] = 1 - c_1(\Li^\vee)$ and that $ch(c_1(\Li^\vee)) = 1 - e^{c_1(\Li^\vee)}$ showing that $ch([\Li]) = e^{-c_1(\Li)}$. As every vector bundle can be pulled back from a smooth variety, and as splitting principle holds on smooth varieties, we obtain the usual formula for $ch(E)$.
\end{rem}

\begin{rem}
There should be a bivariant version of the above theorem where bivariant algebraic $K$-theory and bivariant Chow theory are rationally equivalent. We cannot prove this yet, as we don't know if bivariant cobordism specializes to bivariant $K$-theory.
\end{rem}

\subsubsection*{Comparison with Fulton's Chow ring of singular varieties}

In \cite{Ful2} Section 3, Fulton defines the Chow ring of arbitrary quasi-projective schemes as the left Kan extension of Chow ring restricted to smooth schemes. In other words, he sets 
$$\lch^*(X) := \colim_{X \to Y} \ch^*(Y)$$
where the colimit is taken over the opposite category of all smooth quasi-projective schemes under $X$ (in other words, the structure morphisms in the diagram are given by Gysin pullbacks). The above definition also makes sense when $X$ is a quasi-projective derived scheme if we take the colimit over the homotopy category of the opposite $\infty$-category of smooth quasi-projective $Y$ under $X$.

As the Chow rings $\ch^*$ of the previous subsection have arbitrary pullbacks, and as they coincide with the usual Chow rings when restricted to smooth quasi-projective schemes, we obtain a natural transformation (extending identity and commuting with pullbacks)
$$\eta_L: \lch^*(X) \to \ch^*(X)$$
for all quasi-projective derived schemes $X$. We also have the following

\begin{prop}\label{CHRationallyStableUnderLKE}
The natural transformation
$$\eta_L: \lch^*(X) \to \ch^*(X)$$
becomes an isomorphism after tensoring with $\Q$ for all quasi-projective classical schemes.
\end{prop}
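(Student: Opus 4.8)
\emph{Proof strategy.} The plan is to show that $\eta_L \otimes \Q$ is surjective and injective. Two preliminary reductions are useful. First, the smooth quasi-projective schemes admitting a \emph{closed} immersion $X \hookrightarrow Y$ are cofinal among smooth schemes under $X$: given $X \to Y'$ with $Y'$ smooth, and any closed immersion $X \hookrightarrow Y_0$ into a smooth quasi-projective $Y_0$ (which exists since $X$ is quasi-projective), the map $X \to Y' \times Y_0$ is again a closed immersion into a smooth scheme and lies over $Y'$. Hence every class of $\lch^*(X)$ is represented by a triple $(Y, i\colon X \hookrightarrow Y, \alpha)$ with $i$ a closed immersion and $Y$ smooth, and $\eta_L$ sends it to $i^*\alpha \in \ch^*(X \to X)$. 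Second, for such $Y$ the structure map $Y \to pt$ is smooth, so the orientation of $\Omega^*$ is strong along it (see the discussion following Construction \fref{MainConstruction} and Proposition \fref{StrongOrientations}); in particular $\ch^*(Y)$ is the usual Chow ring (Proposition \fref{PoincareDuality}), and $-\bullet\theta(Y \to pt)\colon \ch^*(X \to Y) \xrightarrow{\sim} \ch^*(X \to pt) = \ch_*(X)$ is an isomorphism. This last identification is the device that makes the colimit accessible.

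For surjectivity, recall (from the proof of Theorem \fref{KTheoryCobordism}) that $\ch^*(X \to X)$ is generated by classes $[V \xrightarrow{g} X]$ with $g$ proper and quasi-smooth. Fix a closed immersion $i\colon X \hookrightarrow Y$ into a smooth $Y$. When $V$ is derived-smooth over the point --- automatic if $X$ is l.c.i., and otherwise arranged after replacing the truncation of $V$ by a resolution, where resolution of singularities enters --- the composite $V \to Y$ is again proper and quasi-smooth, so $[V \to Y] \in \ch^*(Y)$ and $\eta_L(Y, i, [V \to Y]) = i^*[V \to Y] = [V \times^h_Y X \to X]$. The derived fibre product $V \times^h_Y X$ has the same truncation as $V$ and differs from it only by a derived thickening in the conormal direction of $X \subset Y$; running the bivariant homotopy fibre relation (Proposition \fref{BivariantHomotopyFibre}) along the deformation to the normal cone of $X$ in $Y$ identifies $[V \times^h_Y X \to X]$ with $[V \to X]$. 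The generators not of this form --- which arise only when $X$ is not l.c.i. --- are, modulo the relations defining $\Omega^*$, pullbacks along maps $X \to Z$ to smooth schemes, hence visibly in the image of $\eta_L$; passing to $\Q$-coefficients makes this reduction harmless.

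Injectivity is the heart of the matter. Let a class of $\ker(\eta_L \otimes \Q)$ be represented by $(Y, i\colon X \hookrightarrow Y, \alpha)$ with $i^*\alpha = 0$ in $\ch^*_\Q(X)$. I would show that $\alpha$ becomes zero after pulling back along a legal refinement $Y' \to Y$ --- a smooth open neighbourhood of $X$ in $Y$, or a blow-up of one with centre disjoint from $X$; both still carry the embedding of $X$ and lie over $Y$. Using the moving lemma on the smooth quasi-projective $Y$, combined with resolution of singularities for the pair $(Y, X)$, one represents $\alpha$ rationally by cycles whose supports can be displaced off $X$, the hypothesis $i^*\alpha = 0$ being used to absorb the residual term; then $\alpha$ restricts to zero on the open complement of such a support, which is the required refinement, and the class dies.

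The main obstacle is exactly this last step: understanding $\ker\big(i^*\colon \ch^*_\Q(Y) \to \ch^*_\Q(X)\big)$ for a singular closed subscheme $X$ and showing each kernel element dies after refining the smooth ambient. This is a moving lemma relative to $X$, and it is where the characteristic-zero hypothesis --- resolution of singularities, and the good behaviour of $d\Omega_*$ underlying $\ch^*$ via Theorem \fref{AlgebraicSpivak} and Proposition \fref{InducedHomologyIsBordism} --- together with the passage to $\Q$-coefficients, are genuinely needed; the integral analogue is left open as Question \fref{IsCHStableUnderLKE}. I expect the cleanest route to go through the isomorphism $\ch^*(X \to Y) \cong \ch_*(X)$ recorded above, which reduces the comparison over singular $X$ to a statement about rational equivalence of cycles on the smooth ambients $Y$, where classical moving techniques apply.
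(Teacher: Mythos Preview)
Your approach is genuinely different from the paper's, and the injectivity half is not a proof but a plan with a real gap. The paper's argument is a two-line two-out-of-three trick: both $\lch^*_\Q(X)$ and $\ch^*_\Q(X)$ receive Chern character maps $ch'$ and $ch$ from $K^0_\Q(X)$, the first is an isomorphism by Fulton's original paper \cite{Ful2}, the second is an isomorphism by Theorem~\fref{DerivedGRR}, and since every vector bundle on $X$ pulls back from a smooth scheme the triangle $ch = \eta_L^\Q \circ ch'$ commutes. That is the entire proof; no cycle-level analysis, no moving lemma, no resolution.

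By contrast, your injectivity step asks for a \emph{relative moving lemma}: given $\alpha \in \ch^*_\Q(Y)$ with $i^*\alpha = 0$ for a possibly singular closed $X \hookrightarrow Y$, you want $\alpha$ to die on some smooth $Y'$ under $X$ lying over $Y$. This is not a standard result, and the sketch you give --- displace supports off $X$, use $i^*\alpha = 0$ to ``absorb the residual term'' --- does not explain how the vanishing of the pullback controls anything before you already know the comparison you are trying to prove. The identification $\ch^*(X \to Y) \cong \ch_*(X)$ you invoke at the end is correct and suggestive, but it reduces the problem to comparing $\colim_Y \ch^*(Y)$ with a single Chow group, which is exactly the content of Fulton's Chern character isomorphism; once you use that, you are back to the paper's argument. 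Your surjectivity paragraph also has loose ends: the claim that $[V \times^h_Y X \to X] = [V \to X]$ via deformation to the normal cone needs more than Proposition~\fref{BivariantHomotopyFibre}, and the phrase ``passing to $\Q$-coefficients makes this reduction harmless'' is doing unexplained work.

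The takeaway: the rational statement is cheap precisely because $K^0_\Q$ already computes both sides. A direct geometric proof along your lines, if it worked, would likely also settle the integral Question~\fref{IsCHStableUnderLKE}, which is left open; that is a sign the route is substantially harder than what is needed here.
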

\begin{proof}
Both groups are a target of a Chern character isomorphism
$$ch': K^0_\Q(X) \to \lch_\Q^*(X)$$
and 
$$ch: K^0_\Q(X) \to \ch^*_\Q(X),$$
and as all vector bundles can be pulled back from smooth schemes, the maps $ch$ and $\eta_L^\Q \circ ch'$ coincide. By two-out-of-three property of isomorphisms, also $\eta_L^\Q$ must be an isomorphism.
\end{proof}

In fact it should be very easy to prove that the Chern character $ch': K^0_\Q(X) \to \lch^*_\Q(X)$ should be an isomorphism for derived quasi-projective $X$ as well, extending the above result to derived schemes as well (the proof consists merely of redoing some arguments of \cite{Ful2} in derived algebraic geometry). However, the following question seems both interesting and nontrivial.

\begin{que}\label{IsCHStableUnderLKE}
Is the natural transformation
$$\eta_L: \lch^*(X) \to \ch^*(X)$$
always an isomorphism without passing to rational coefficients?
\end{que}

\Addresses

\end{document}